\documentclass[12pt]{article}
\usepackage[english]{babel}
\usepackage{amsmath, amssymb}
\usepackage{mathtools}
\usepackage{amsthm} 
\usepackage{url} 
\usepackage{makeidx} 
\usepackage{array}
\usepackage{float,placeins}
\usepackage{wrapfig}
\usepackage{graphicx}
\usepackage{rotating}
\usepackage{nicefrac}
\usepackage{longtable}
\usepackage{subfiles,thm-restate}

\usepackage[dvipsnames]{xcolor}

\usepackage{nicefrac}

\usepackage{fullpage}

\usepackage{hyperref}
\usepackage{cleveref}

\newtheoremstyle{plainsl}%
	{\topsep}
	{\topsep}
	{\slshape} 
	{}
	{\normalfont\bfseries}
	{.}
	{ }
	{}

{\theoremstyle{plainsl}
\newtheorem{Theorem}{Theorem}[section]

\newtheorem*{Example}{Example}

\newtheorem{Lemma}[Theorem]{Lemma}
\newtheorem{Cor}[Theorem]{Corollary}

\newtheorem{*Not}[Theorem]{Notation}

\newtheorem{Con}[Theorem]{Conjecture}
\newtheorem{Prob}[Theorem]{Open Problem}
}

{\theoremstyle{remark}
}

\newtheoremstyle{plainnormal}%
	{\topsep}
	{\topsep}
	{\normalfont}
	{}
	{\normalfont\bfseries}
	{.}
	{ }
	{}

\theoremstyle{plainnormal}
\newtheorem*{Def}{Definition}
\newcommand\dir[1]{\mkern2mu\overrightarrow{\mkern-2mu\smash{#1}\vphantom{\textsc{f}}}\mkern2mu}

\usepackage{bbm}
\renewcommand{\mathbb}{\mathbbm}

\def\sqr#1#2{{\vbox{\hrule height.#2pt
    \hbox{\vrule width.#2pt height#1pt \kern#1pt
        \vrule width.#2pt}\hrule height.#2pt}}}
\def\eqed{\sqr{7}{7}}
\def\qedd{%
    \ifmmode\eqno\eqed
    \else\nobreak\ \hfill\eqed\medbreak\fi}

\numberwithin{equation}{section}

\definecolor{vcolour}{RGB}{230,97,0}
\definecolor{rcolour}{RGB}{93,58,155}
\definecolor{gcolour}{RGB}{0,120,207}
\definecolor{jcolour}{RGB}{93,198,155}

\renewcommand{\phi}{\varphi}

\newcommand\gab[1]{{\bf \textcolor{gcolour}{#1  - Gabriëlle} }}

\newcommand\jo[1]{}

\newcommand{\NGi}[4][]{%
  N^{#1}_{#2}(#3,#4)%
}
\usepackage[normalem]{ulem}

\usepackage{tikz}
\usetikzlibrary{calc,decorations.markings,arrows.meta}
\usepackage{subfiles}
\usepackage{subcaption}
\usepackage{enumitem}
\usepackage{comment}

\makeindex

\author{Krystal Guo, Ross J. Kang and Gabriëlle Zwaneveld}

\begin{document}

\title{Seymour-tight orientations} 
\date{1 April 2026} 
\maketitle

\begingroup
    \renewcommand\thefootnote{}
    \footnotetext[0]{Korteweg-de Vries Institute for Mathematics, University of Amsterdam, Amsterdam, The Netherlands. (\texttt{\{k.guo,r.kang,g.c.zwaneveld\}@uva.nl})  }
\endgroup


\begin{abstract}
We investigate `almost counterexamples' to Seymour's second neighbourhood conjecture. In what we call \textsl{Seymour-tight orientations}, the size of the first neighbourhood of each vertex equals the size of its second neighbourhood. We give several examples and constructions. Specifically, we prove that the class of Seymour-tight orientations is closed under taking (generalized) lexicographic products. Moreover, the lexicographic product of a putative counterexample to Seymour's second neighbourhood conjecture and a Seymour-tight orientation is again a counterexample. 

Using lexicographic products, we show that if the conjecture is false, then there exist counterexamples that are close to regular tournaments, and moreover that any digraph occurs as an induced subgraph of a counterexample. We then use this same machinery to construct special putative counterexamples to Sullivan's conjecture.

The inherent symmetry of these orientations give access to an algebraic perspective. Seymour-tight orientations that are also Cayley digraphs correspond to special pairs of critical sets in groups, which connects potentially to additive combinatorics. We use Kemperman's theorem to characterize those Seymour-tight orientations that are the Cayley digraph of an abelian group.

  \noindent\textit{Keywords: Seymour's second neighbourhood conjecture; directed graph; lexicographic product}

  \noindent\textit{MSC 2020 Classification: Primary 05C20; Secondary 05C76} 
\end{abstract}

\section{Introduction}

We introduce Seymour-tight orientations, motivated by the equality case in Seymour’s second neighbourhood conjecture and as a natural symmetry condition for oriented graphs. We proceed with some definitions. Let $G$ be a directed graph. The \textsl{out-neighbourhood} of a vertex $v \in V(G)$ is the set $\NGi{1}{G}{v} := \{ w \in V(G) \mid (v,w) \in E(G)\}$. It consists precisely of the vertices $w$ for which there is an arc from $v$ to $w$. 
The \textsl{second out-neighbourhood} of $v$ is \[\NGi{2}{G}{v} := \left\{ w\in V(G) - (\NGi{1}{G}{v} \cup \{v\}) \, \middle\vert \, \exists u \in \NGi{1}{G}{v} \text{ s.t. } (u,w) \in E(G)\right\}.\] Thus, it consists of all vertices that can be reached in two steps, but not less, from $v$. 
An \textsl{orientation} is a digraph that contains no cycles of length 2. Equivalently, it can be seen as an assignment of a direction to each edge of an undirected graph. The following is known as  Seymour's second neighbourhood conjecture. 

\begin{Con}[Seymour~\cite{dean1995squaring}]\label{conj:Seymour}
    Every orientation $G$ contains at least one vertex $v$ such that $|\NGi{2}{G}{v}| \geq |\NGi{1}{G}{v}|$.
\end{Con}

This open conjecture, which is related to the Caccetta-H\"{a}ggkvist conjecture~\cite{CaccettaHaggkvist1978}, has received considerable attention~\cite{CHEN2023272,cohn2016number,DAAMOUCH2021332,fidler2007remarks,halkiewicz2026seymour}. It is known to hold for several special classes of digraphs. In particular, it was solved for tournaments by Fischer~\cite{fisher1996squaring} (see also an alternative proof by Havet and Thomassé~\cite{havet2000median}), confirming a conjecture of Dean~\cite{dean1995squaring}. 

Chen, Shen and Yuster~\cite{chen2003second} proved that every orientation $G$ has a vertex $v$ satisfying $|\NGi{2}{G}{v}| \geq \gamma |\NGi{1}{G}{v}|$ where $\gamma =  0.657298\ldots$, the unique real root of $2x^3+x^2-1=0$. This constant $\gamma$ was recently improved by Huang and Peng to 0.715538~\cite{huang2024improved}. Espuny D\'iaz, Gir\~ao, Granet and Kronenberg~\cite{espuny2025seymour} have shown for all $p<\frac{1}{2}$ that the conjecture holds for all orientations of $G(n,p)$ a.a.s.~(with probability tending to 1 as $n \rightarrow \infty$). Moreover, they proved that if the conjecture is false then for all $p \in (\frac{1}{2},1)$, there exists a.a.s.~an orientation of $G(n,p)$ which is a counterexample. 

A counterexample $G$ to Seymour's second neighbourhood conjecture satisfies $|\NGi{1}{G}{v}| > |\NGi{2}{G}{v}|$ for all $v \in V(G)$; properties of (minimal) counterexamples have been studied.
For example, the minimum out-degree of a counterexample is at least $7$~\cite{kaneko2001minimum} and  satisfies $\delta^{+}(G) \geq \sqrt{|V(G)|}$~\cite{espuny2025seymour}. Further, if there exists a counterexample with minimum out-degree $\delta$, then there exists a counterexample on at most $\binom{\delta+1}{2}$ vertices~\cite{seacrest2018seymour}. 



Rather than putative counterexamples, what are the properties of graphs that are \textsl{close} to counterexamples? We are interested in orientations that satisfy $|\NGi{1}{G}{v}| \geq |\NGi{2}{G}{v}|$ for all $v \in V(G)$. Note that by adding a universal sink $s$, we get an orientation where $|\NGi{1}{G}{s}| =  |\NGi{2}{G}{s}|=0$ and $|\NGi{1}{G}{v}| >  |\NGi{2}{G}{v}|$ for all $v \neq s$; loosely speaking, we may think of these as being as close to a counterexample as possible. For convenience,  we call orientations satisfying $|\NGi{1}{G}{v}| \geq |\NGi{2}{G}{v}|$ for all $v \in V(G)$  \textsl{Seymour orientations}.  

Within this class of orientations, we mostly focus on those that are tightest with respect to the condition, which we might consider a type of symmetry. We say that an orientation $G$ is \textsl{Seymour-tight} if $|\NGi{1}{G}{v}| =  |\NGi{2}{G}{v}|$ for all $v \in V(G)$. Focusing on symmetric graphs has often been a useful strategy to construct extremal examples and/or counterexamples. 

Seymour-tight orientations give us a better understanding of how counterexamples to Seymour's second neighbourhood conjecture may arise. We will see that the class of Seymour-tight orientations are closed under taking lexicographic products. Based on this, we observe that  the lexicographic product of a counterexample and a Seymour orientation, in either order, 
is a  counterexample~\cite{brantner2009contributions}. 
Among other applications, we use this observation to show that there are counterexamples (if one exists) that are close to (regular) tournaments (Corollary~\ref{Close to reg tournament}), and to show the following result.

\begin{restatable}{Theorem}{Theoreminduced}
     If Seymour's second neighbourhood conjecture is false, then every orientation $D$ is an induced subgraph of a strongly connected counterexample.
\end{restatable}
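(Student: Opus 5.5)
The plan is to embed $D$ as a fibre of a lexicographic product $G[D]$, where $G$ is a strongly connected counterexample. Here $V(G[D]) = V(G)\times V(D)$, and $(g,h)\to(g',h')$ is an arc exactly when either $g\to g'$ in $G$, or $g=g'$ and $h\to h'$ in $D$. I would not quote the product observation, since that is stated for $D$ a Seymour orientation and here $D$ is arbitrary. Instead I would redo the neighbourhood count directly; it works because $G$ has a strict deficit at every vertex.

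\emph{Step 1: reduce to a strongly connected counterexample.} Let $G_0$ be any counterexample and let $G$ be a terminal strong component of $G_0$, i.e.\ one with no arcs leaving it. For $v\in V(G)$, every vertex reachable from $v$ lies in $G$. Hence $\NGi{1}{G}{v}=\NGi{1}{G_0}{v}$ and $\NGi{2}{G}{v}=\NGi{2}{G_0}{v}$, so $G$ is a strongly connected counterexample. It has at least two vertices, since a counterexample has positive out-degree everywhere (indeed $\delta^+\ge 7$).

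\emph{Step 2: $G[D]$ is an orientation containing $D$ as an induced subgraph.} A $2$-cycle in $G[D]$ would project to a $2$-cycle in $G$ (if it joins distinct fibres) or in $D$ (if it lies inside one fibre); neither exists. The fibre $\{g\}\times V(D)$ induces a copy of $D$.

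\emph{Step 3: $G[D]$ is a counterexample.} Fix $(g,h)$. First, $\NGi{1}{G[D]}{(g,h)} = (\NGi{1}{G}{g}\times V(D)) \cup (\{g\}\times \NGi{1}{D}{h})$. For the second neighbourhood:
\begin{itemize}
\item each fibre over $\NGi{1}{G}{g}$ is already entirely in the first neighbourhood;
\item each fibre over $\NGi{2}{G}{g}$ is reached entirely in two steps;
\item no fibre over any other $g'\ne g$ is reached in two steps;
\item within the fibre of $g$, a two-step path through another fibre would require $g\to g'\to g$, which is impossible. So the only vertices of the fibre of $g$ in the second neighbourhood are $\{g\}\times \NGi{2}{D}{h}$.
\end{itemize}
Writing $d_i=|\NGi{i}{G}{g}|$ and $e_i=|\NGi{i}{D}{h}|$, this gives
\[ |\NGi{1}{G[D]}{(g,h)}|-|\NGi{2}{G[D]}{(g,h)}| = (d_1-d_2)|V(D)| + e_1-e_2 \ge |V(D)|+e_1-e_2. \]
Using $e_2\le |V(D)|-1-e_1$, the right-hand side is at least $2e_1+1>0$. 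So every vertex of $G[D]$ has a strictly larger first than second neighbourhood.

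\emph{Step 4: strong connectivity.} Take any $(g,h)$ and $(g',h')$. Since $G$ is strongly connected with at least two vertices, there is a directed walk of length at least $1$ from $g$ to $g'$; when $g=g'$, use a closed walk through $g$. Lift it by choosing arbitrary second coordinates at the intermediate steps and $h'$ at the final step. Each step changes the $G$-coordinate along an arc of $G$, so each step is an arc of $G[D]$.

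The main obstacle is Step 3, specifically verifying that no vertex of $v$'s own fibre enters the second neighbourhood via other fibres. This is where the absence of $2$-cycles in $G$ is essential, and where the strict deficit of $G$ absorbs the arbitrary, possibly non-Seymour, behaviour of $D$.
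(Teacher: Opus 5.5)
Your proof is correct, and it takes a more direct route than the paper's.

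\textbf{The paper's route.} It first uses Lemma~\ref{Every graph is induced subgraph} to embed $D$ as an induced subgraph of a strongly connected Seymour-tight orientation $S$. It does this by attaching sinks and $\dir{C_3}[E_{k_v}]$ gadgets that cancel each vertex's excess. Only then does it apply Theorem~\ref{Lex product counterexample} to a strongly connected counterexample $O$, concluding that $O[S]$ is a counterexample containing $S$, and hence $D$.

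\textbf{Your route.} You skip the embedding altogether. With the counterexample as the outer factor, its deficit $d_1-d_2\ge 1$ is multiplied by $|V(D)|$. This always beats the largest possible excess $e_2-e_1\le |V(D)|-1-2e_1$ of an arbitrary $D$, so $G[D]$ is already a counterexample. What this buys:
\begin{itemize}
\item the argument is self-contained, needing neither the embedding lemma nor the cited product theorem;
\item the counterexample is much smaller, with $|V(G)|\cdot|V(D)|$ vertices instead of $|V(O)|\cdot|V(S)|$;
\item your terminal-component step justifies the existence of a strongly connected counterexample, which the paper simply asserts for a vertex-minimal one.
\end{itemize}

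\textbf{What the paper's detour buys.} Because $S$ is Seymour, the product theorem works with $S$ on either side, so $S[O]$ would also serve. Your estimate, by contrast, genuinely needs the counterexample on the outside. For instance, if $D$ has a vertex $h$ with $e_1=1$ and $e_2=2$, then in $D[G]$ the vertices over $h$ have first neighbourhoods of size $|V(G)|+d_1$ and second neighbourhoods of size $2|V(G)|+d_2$. Since $d_1<|V(G)|$, the second is strictly larger, so $D[G]$ is not a counterexample. The paper's route also fits its broader programme of building counterexamples out of Seymour-tight pieces.
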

\vspace{-10 pt}
\begin{figure}[ht]
    \centering
    \begin{tikzpicture}[scale=1, every node/.style={circle, draw, inner sep=1.5pt}]

\tikzstyle{edge} = [
    postaction={
      decorate,
      decoration={
        markings,
        mark=at position 0.6 with {\arrow[scale=1.2]{>}}}}]

\tikzstyle{vertex}=[circle, draw, fill=black,
                        inner sep=0pt, minimum width=4pt]
\tikzset{
    arrow/.style={
        -{Latex[length=5mm,width=3mm]},  
        thick,                             
        line width=1.5pt                   
    }
}


\node[vertex] (C1) at (0,1.5) {};
\node[vertex] (C2) at (0,-1.5) {};
\node[vertex] (C3) at (2.598,0) {};

\draw[edge] (C1)--(C2);
\draw[edge] (C2)--(C3);
\draw[edge] (C3)--(C1);

\draw[arrow] (1,1.4) .. controls (1,2.5) and (3,3) .. (4,3);
\draw[arrow] (1,-1.4) .. controls (1,-2.5) and (3,-3) .. (4,-3);

\begin{scope}[shift={(6.5,2.5)}, scale=1.2]

\node[vertex] (A1a) at (90:1.2) {};
\node[vertex] (A1b) at (90:2) {};
\node[vertex] (A2a) at (210:1.2) {};
\node[vertex] (A2b) at (210:2) {};
\node[vertex] (A3a) at (330:1.2) {};
\node[vertex] (A3b) at (330:2) {};

\draw[edge] (A1a)--(A2a);
\draw[edge] (A2a)--(A3a);
\draw[edge] (A3a)--(A1a);
\draw[edge] (A1b)--(A2b);
\draw[edge] (A2b)--(A3b);
\draw[edge] (A3b)--(A1b);

\draw[edge] (A1a)--(A2b);
\draw[edge] (A2a)--(A3b);
\draw[edge] (A3a)--(A1b);
\draw[edge] (A1b)--(A2a);
\draw[edge] (A2b)--(A3a);
\draw[edge] (A3b)--(A1a);

\node[draw=none] at (0,-2) {(1)};

\end{scope}


\begin{scope}[shift={(12,-3.3)}, scale=1.2]

\node[vertex] (B1) at (90:0.9) {};
\node[vertex] (B2) at (210:0.9) {};
\node[vertex] (B3) at (330:0.9) {};
\node[vertex] (B4) at (90:2) {};
\node[vertex] (B5) at (210:2) {};
\node[vertex] (B6) at (330:2) {};

\draw[edge] (B1)--(B2);
\draw[edge] (B2)--(B3);
\draw[edge] (B3)--(B1);
\draw[edge] (B4)--(B5);
\draw[edge] (B6)--(B4);
\draw[edge] (B5)--(B6);

\draw[edge] (B1)--(B5);
\draw[edge] (B2)--(B6);
\draw[edge] (B3)--(B4);

\node[draw=none] at (0,-1.565) {(4)};

\end{scope}


\begin{scope}[shift={(6.3,-1.5)}, scale=1.2]

\node[vertex] (P1) at (270:1) {};
\node[vertex] (P2) at (150:1) {};
\node[vertex] (P3) at (30:1) {};
\node[vertex] (P4) at (270:2.35) {};

\draw[edge] (P1)--(P3);
\draw[edge] (P2)--(P1);
\draw[edge] (P3)--(P2);
\draw[edge] (P4)--(P1);

\node[draw=none] at (0,-3) {(3)};

\end{scope}


\begin{scope}[shift={(12.5,2.5)}]

\def\Rbig{2.2}
\def\Rsmall{0.7}

\foreach \j in {0,1,2} {
  \node[vertex] (a\j) at ($(90:\Rbig)+({60+120*\j}:\Rsmall)$) {};
}
\foreach \j in {0,1,2} {
  \node[vertex] (b\j) at ($(210:\Rbig)+({60+120*\j}:\Rsmall)$) {};
}
\foreach \j in {0,1,2} {
  \node[vertex] (c\j) at ($(330:\Rbig)+({60+120*\j}:\Rsmall)$) {};
}

\foreach \u in {a0,a1,a2} \foreach \v in {b0,b1,b2} \draw[edge] (\u)--(\v);
\foreach \u in {a0,a1,a2} \foreach \v in {c0,c1,c2} \draw[edge] (\v)--(\u);
\foreach \u in {b0,b1,b2} \foreach \v in {c0,c1,c2} \draw[edge] (\u)--(\v);

\foreach \x/\y in {c0/c1,c1/c2,c2/c0}
  \draw[edge] (\x)--(\y);

\node[draw=none] at (0.3,-2.5) {(2)};

\end{scope}

\end{tikzpicture}
    \caption{Four different ways to construct a larger Seymour-tight orientation from $\dir{C_3}$.\\
    (1) Take a lexicographic product $\dir{C_3}[E_2]$ (Lemma~\ref{Seymour lex}).\\
    (2) Take a generalized lexicographic product $\dir{C_3}[\dir{C_3},E_3,E_3]$ (Corollary~\ref{vervang alle punten}).\\
    (3) Add a source $s$ such that $\NGi{1}{G}{s}=\NGi{1}{G}{v}$ for some $v \in V(G)$ (Lemma~\ref{Source copy neighbourhood}).\\
    (4) Use a digraph homomorphism $G \rightarrow H$ to add a source component $G$ to $H$ (Lemma~\ref{Graph hom construction}).\\
    Note that (1) and (2) are strongly connected, whereas (3) and (4) are not.
    }\label{Figure intro}
\end{figure} 

We also present other ways to build up Seymour-tight orientations from other Seymour-tight orientations, see Figure~\ref{Figure intro}. In Section~\ref{Sec: gen lex}, we show that we can replace subgraphs on which all other vertices are uniform. Hence, the class of Seymour-tight orientations is closed under taking generalized lexicographic products. In Section~\ref{Sec: Strongly disconnected}, we describe how to add a source component to a Seymour-tight orientation to obtain a strongly disconnected Seymour-tight orientation. In Section~\ref{Section: Sullivans conjecture}, we apply our methods to construct Sullivan-tight orientations and special putative counterexamples to Sullivan's conjecture.

Any vertex-transitive Seymour orientation is either a counterexample or a Seymour-tight orientation. It is therefore natural to ask about Seymour(-tight) orientations with more symmetry; it is a classic result of Hamidoune \cite{hamidoune1981application} that there are no Cayley counterexamples. We observe in Section~\ref{Section: Cayley Seymour} that the connection sets of Seymour Cayley orientations correspond to critical pairs of sets in groups, giving a nice connection to structural additive combinatorics~\cite{Grynkiewicz2013}. We use a result of Kemperman~\cite{kemperman1960small}, about critical pairs in abelian groups, to classify all Seymour abelian Cayley orientations. 

\begin{restatable}{Theorem}{ClassificationAbelian}\label{classificiation abelian}
If a Seymour orientation is the Cayley digraph of an abelian group, then it can be constructed by taking (possibly repeated) lexicographic products of empty graphs, the $k$-th power of directed cycles, and of regular tournaments.
\end{restatable}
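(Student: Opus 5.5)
We need to classify Cayley digraphs $\mathrm{Cay}(\Gamma,S)$ with $\Gamma$ abelian, $S\cap(-S)=\emptyset$, $0\notin S$, that are Seymour. The plan is to translate the condition into additive combinatorics, apply Kemperman's structure theorem, and reassemble the pieces as lexicographic products.

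\textbf{Translation and the critical-pair condition.} By vertex-transitivity we only need to look at the vertex $0$. Its first out-neighbourhood is $S$, and its second out-neighbourhood is $(S+S)\setminus(S\cup\{0\})$. The Seymour condition $|N^2|\le |N^1|$ therefore reads $|(S+S)\setminus(S\cup\{0\})|\le|S|$. By Hamidoune's theorem there are no Cayley counterexamples, so in fact equality holds.

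Next, set $A=S\cup\{0\}$. Then $A+A = \{0\}\cup S\cup(S+S)$. Since $S\cap -S=\emptyset$, we have $0\notin S+S$, so $|A+A| = 1+|S|+|N^2(0)| = 2|A|-1$. Hence $(A,A)$ is a critical pair with $|A+A|=|A|+|A|-1$, and $A$ has the property that $A\cap(-A)=\{0\}$.

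\textbf{Applying Kemperman.} We use Kemperman's theorem for critical pairs in abelian groups, in its recursive form. Either $|A+A|=2|A|-1$ with $A$ an arithmetic progression, or $A$ is "quasi-periodic". In the quasi-periodic case there is a nontrivial subgroup $H$ such that $A$ is a union of full $H$-cosets together with a part inside one coset, and that part is itself a critical pair in $H$ (up to translation). There is also an aperiodic elementary case; in the symmetric-pair situation $B=A$ its elementary types are the arithmetic progression with common difference $d$, and the case where $A\cup(c-A)$ is the complement of a single element in a coset of $H$ (the "$|A+B|=|H|-1$" type). We will have to check which elementary types survive when $B=A$, $0\in A$ and $A\cap -A=\{0\}$.

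\textbf{Matching each case to a lexicographic product.} We first handle the elementary cases.
\begin{itemize}
\item Arithmetic progression. Since $0\in A$ and $A$ is antisymmetric, $A$ must be $\{0,d,2d,\dots,kd\}$, possibly after replacing $d$ by a suitable multiple. In $\langle d\rangle$ this gives the $k$-th power of a directed cycle, and the copies over the cosets of $\langle d\rangle$ have no arcs between them. So the digraph is $E_m[\dir{C_n}^k]$, the lexicographic product of an empty graph and a cycle power.
\item The type $|A+A|=|H|-1$. Then $S\cup -S = H\setminus\{0\}$, so $S$ induces a regular tournament on each coset of $H$. Again there are no arcs across cosets, giving $E_m[T]$.
\end{itemize}
For the quasi-periodic case, write $S = S_H\cup S'$, where $S_H=S\cap H$ and $S'$ is a union of full $H$-cosets not containing $H$ itself. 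Then $\mathrm{Cay}(\Gamma,S)\cong \mathrm{Cay}(\Gamma/H,\bar S')[\mathrm{Cay}(H,S_H)]$, using the definition of the lexicographic product (Lemma~\ref{Seymour lex}). Both factors are again Seymour abelian Cayley orientations: the product of Seymour orientations is Seymour, and conversely equality forces each factor to be tight. Also $|H|<|\Gamma|$ and $|\Gamma/H|<|\Gamma|$. Induction on $|\Gamma|$ then finishes the proof, with the trivial group, or $S=\emptyset$, giving empty graphs.

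\textbf{Main obstacle.} The hard part is the bookkeeping in Kemperman's theorem. We must verify that when $B=A$ the quasi-periodic decomposition puts the exceptional coset at $H$ itself, using $0\in A$ and antisymmetry, and not at some other coset $a+H$. We also need to show that the factor $\bar S'$ on the quotient is again critical, and to rule out the remaining elementary types, such as $|A|=1$ or the mixed type, for antisymmetric $A$. I would attempt this by directly comparing $|A+A|$ with the coset counts, using $|A+A|=2|A|-1$ to force $\bar A+\bar A$ to be critical in $\Gamma/H$.
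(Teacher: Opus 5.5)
Your strategy matches the paper's: pass to $A=S\cup\{0\}$ with $|A+A|=2|A|-1$, apply Kemperman's structure theorem, write the digraph as $\mathrm{Cay}(\Gamma/H,\bar S')[\mathrm{Cay}(H,S_H)]$, and induct. Your remark that tightness of the product forces tightness of both factors is sound, because the deficit $|N_1|-|N_2|$ of the $H$-factor has absolute value less than $|H|$, so the quotient deficit must vanish. It is a valid replacement for Kemperman's property (iv), and inducting on both factors is fine. However, the proposal stops exactly where the argument has content, and the one case analysis you write out is wrong.

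\textbf{Unique representation of $0$.} You never use that $0=0+0$ is the \emph{unique} representation of $0$ in $A+A$, which holds because $S\cap(-S)=\emptyset$. This fact does two jobs:
\begin{itemize}
\item The structure theorem (Theorem~\ref{Thm equality Kemperman}) requires a uniquely represented element, so without it your application of the theorem is unjustified.
\item It excludes the elementary type in which $A'+B'$ is an $H$-coset minus one point, since in that type no element of the sumset is uniquely represented.
\end{itemize}

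\textbf{Elementary types.} You have conflated two of Kemperman's types. The regular tournaments come from the type with $A'\subseteq H$ and $2|A'|=|H|+1$, where $A'+A'=H$ and so $|A'+A'|=|H|$. Your own conclusion $S\sqcup(-S)=H\setminus\{0\}$ gives exactly this, and it contradicts $|A+A|=|H|-1$. The $|H|-1$ type is the one that must be ruled out, by the previous point. On the other hand, $|A'|=1$ should not be ruled out: it is $A'=\{0\}$, which gives the empty-graph factor.

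\textbf{Your ``main obstacle''.} This step has a short proof that you should supply:
\begin{itemize}
\item If $0$ lay in the $H$-periodic part $A\setminus A'$, then $H\subseteq A$. Any $h\in H\setminus\{0\}$ would then give $h,-h\in S$, a contradiction. So $0\in A'$, and hence $A'\subseteq H$.
\item Any $a\in(A\cap H)\setminus A'$ would force $H=a+H\subseteq A\setminus A'$, contradicting $0\in A'$.
\item Therefore $A'=A\cap H$. The same argument applies to the second set $A''$ of Kemperman's pair, so $A'=A''$.
\end{itemize}
Consequently $A\setminus H$ is a union of $H$-cosets, which is exactly what your lexicographic decomposition needs. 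Finally, in the progression case $0$ must be an endpoint, since otherwise $\pm d\in S$. This justifies your normalisation to $\{0,d,\dots,kd\}$ after possibly replacing $d$ by $-d$.
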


While the hypothesis is algebraic, the conclusion is purely combinatorial. It is natural to wonder if the same combinatorial classification extends to Seymour orientations that are Cayley digraphs or that are vertex-transitive. 
In the concluding section of the paper, we discuss this and other open problems. In particular, we pose some problems related to conditions for Seymour-tight orientations such that its converse is also Seymour-tight.

\section{Examples and basic properties}\label{Sec: examples}
We start by giving a few examples of Seymour-tight orientations. Note that any directed cycle $\dir{C_n}$, $n \geq 3$, is a Seymour-tight orientation, as every vertex has one vertex in its out-neighbourhood and one vertex in its second out-neighbourhood. In \textsl{the $k$-th power} of a directed graph  $D$, denoted by $D^k$, there is an arc from $v$ to $w$ if and only if there is a path of length at most $k$ from $v$ to $w$ in $D$.

\begin{Lemma}\label{lem: kth power dicycle}
     Let $\dir{C_n}$ be a directed cycle. If $2k <n$, then the $k$-th power of $\dir{C_n}$, denoted by $\dir{C_n}^k$, is a Seymour-tight orientation.
\end{Lemma}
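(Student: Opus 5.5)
Label the vertices of $\dir{C_n}$ by $\mathbb{Z}_n$ with arcs $(i,i+1)$. Then $\dir{C_n}^k$ has an arc $(i,j)$ exactly when $j-i \in \{1,\dots,k\} \pmod n$. The plan is to show first that this digraph is an orientation, and then to compute both neighbourhoods of a vertex explicitly.

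\emph{Orientation.} A $2$-cycle would need some $d \in \{1,\dots,k\}$ with $-d \equiv d' \pmod n$ for some $d' \in \{1,\dots,k\}$. That means $n \mid d+d'$, where $2 \le d+d' \le 2k < n$, which is impossible. The same bound shows there are no loops. The hypothesis $2k<n$ is used exactly here, and again below.

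\emph{Neighbourhoods.} The digraph is vertex-transitive via $i \mapsto i+1$, so it suffices to treat $v=0$.
\begin{itemize}
\item Since $k<n$, the residues $1,\dots,k$ are distinct. Hence $\NGi{1}{G}{0}=\{1,\dots,k\}$ and $|\NGi{1}{G}{0}|=k$.
\item The vertices reachable from $0$ in exactly two steps are the residues of $2,\dots,2k$.
\item Since $2k<n$, these residues are distinct and none of them is $0$.
\item Removing $0$ and the first neighbourhood $\{1,\dots,k\}$ from $\{2,\dots,2k\}$ leaves $\NGi{2}{G}{0}=\{k+1,\dots,2k\}$, a set of size $k$.
\end{itemize}
So $|\NGi{1}{G}{v}|=|\NGi{2}{G}{v}|=k$ for every vertex $v$, which is the definition of Seymour-tight.

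The only point needing care is the modular bookkeeping: the residues $k+1,\dots,2k$ must neither wrap around to $0$ nor meet $\{1,\dots,k\}$. Both follow from $2k<n$, since all these integers lie strictly between $0$ and $n$.
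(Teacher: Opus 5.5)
Your proof is correct and follows essentially the same route as the paper. Both compute $\NGi{1}{G}{v}=\{v+1,\dots,v+k\}$, observe that two-step reachability gives $v+2,\dots,v+2k$, and use $2k<n$ so that removing the first neighbourhood leaves $\NGi{2}{G}{v}=\{v+k+1,\dots,v+2k\}$. Your extra check that $2k<n$ also rules out loops and $2$-cycles, so that the $k$-th power really is an orientation, is a point the paper leaves implicit.
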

\begin{proof}
    Let $v_i$ be a vertex in the $k$-th power $\dir{C_n}^k$. Then $\NGi{1}{\dir{C_n}^k}{v_i} = \{v_{i+1}, \ldots, v_{i+k}\}$ where the indices are taken modulo $n$. Therefore, the vertices that can be reached in at most two steps from $v_i$ are precisely the vertices $v_{i+1}, \ldots, v_{i+2k}$. Since $2k<n$, we obtain $\NGi{2}{\dir{C_n}^k}{v_i} = \{v_{i+k+1}, \ldots, v_{i+2k}\}$, which implies $|\NGi{1}{\dir{C_n}^k}{v_i}| = k = |\NGi{2}{\dir{C_n}^k}{v_i}|$ for all vertices $v_i$.
\end{proof}

The $k$-th power of a directed cycle of length $n$ is a Cayley digraph of $\mathbb{Z}_n$ with connection set $\{1, \ldots,k \}$. Next, we characterize which tournaments are Seymour-tight orientations.

\begin{Lemma}
    A tournament is a Seymour-tight orientation if and only if it is regular.
\end{Lemma}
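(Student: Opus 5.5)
In a tournament $T$ on $n$ vertices, every pair of distinct vertices is joined by exactly one arc. So for each vertex $v$ the sets $\{v\}$, $\NGi{1}{T}{v}$, the in-neighbourhood $N^-(v)$ and $\NGi{2}{T}{v}$ behave very rigidly. The key observation is that $\NGi{2}{T}{v}\subseteq N^-(v)$, because every vertex outside $\NGi{1}{T}{v}\cup\{v\}$ is an in-neighbour of $v$. Hence $|\NGi{2}{T}{v}| \le d^-(v) = n-1-d^+(v)$ for every $v$. The plan is to compare this inequality with the total out-degree and in-degree counts.

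\emph{($\Rightarrow$)} Suppose $T$ is Seymour-tight, so $d^+(v)=|\NGi{2}{T}{v}|\le d^-(v)$ for every vertex $v$. Summing over all vertices gives $\sum_v d^+(v)=\sum_v d^-(v)$, since both sums count the arcs. Therefore equality must hold at every vertex, so $d^+(v)=d^-(v)=(n-1)/2$ for all $v$, and $T$ is regular.

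\emph{($\Leftarrow$)} Suppose $T$ is regular, so $d^+(v)=d^-(v)=(n-1)/2$. We must show that $\NGi{2}{T}{v}=N^-(v)$, i.e.\ that every in-neighbour $w$ of $v$ is reachable from $v$ in two steps. This is the only step needing a real argument. Suppose instead that some $w\in N^-(v)$ has no in-neighbour in $\NGi{1}{T}{v}$. Then $w$ beats every vertex of $\NGi{1}{T}{v}$ and also beats $v$, so $d^+(w)\ge (n-1)/2+1$, contradicting regularity. Hence $|\NGi{2}{T}{v}|=d^-(v)=d^+(v)$ for all $v$, and $T$ is Seymour-tight.

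The degenerate case $n=1$ is trivially consistent with both directions. The main (mild) obstacle is the reverse direction's reachability argument, which the degree count above handles directly.
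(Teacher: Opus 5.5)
Your proof is correct and is essentially the paper's argument. For ($\Rightarrow$), the paper uses $|\NGi{2}{T}{v}|\le n-1-|\NGi{1}{T}{v}|$ to force every out-degree to be at most $(n-1)/2$ and then averages, which is equivalent to your summing of $d^+(v)\le d^-(v)$; for ($\Leftarrow$), it shows a regular tournament has diameter $2$ because an in-neighbour $w$ of $v$ not reached in two steps would beat all of $\{v\}\cup\NGi{1}{T}{v}$, and your version states the arc directions correctly ($w\to u$, so $d^+(w)\ge (n+1)/2$) whereas the paper's write-up has them reversed.
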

\begin{proof}
    Let $T$ be a tournament on $n$ vertices that is also a Seymour-tight orientation. If there exists a vertex $v$ with $|\NGi{1}{T}{v}| > \frac{n-1}{2}$, then 
    \[|\NGi{2}{T}{v}| \leq V(T)- |\{v\}| - |\NGi{1}{T}{v}|< n-1-\frac{n-1}{2}=\frac{n-1}{2}=|\NGi{1}{T}{v}|.\]
    Thus $T$ is not a Seymour-tight orientation.
    Hence, all vertices $v$ satisfy $|\NGi{1}{T}{v})| \leq (n-1)/2$. In a tournament the average out-degree equals $(n-1)/2$, thus $|\NGi{1}{T}{v}| = (n-1)/2$ for all $v$, implying that $T$ must be regular. 

    We now prove that any regular tournament $T$ is Seymour-tight. We start by showing that the diameter of $T$ is $2$. Suppose for a contradiction that $v$ and $w$ are two vertices such that $w$ cannot be reached within two steps from $v$. Then for all vertices $u \in \NGi{1}{T}{v}$, the edge $\{u,w\}$ is oriented from $u$ to $w$. Therefore, $w$ must have at most \[n-1-|\NGi{1}{T}{v}| = n -1- \frac{n+1}{2} = \frac{n-3}{2}\] out-neighbours, a contradiction. Since $T$ has diameter $2$, \[|\NGi{2}{T}{v}| = n-1-|\NGi{1}{T}{v}| = n-1-\frac{n-1}{2} = \frac{n-1}{2} = |\NGi{1}{T}{v}|\]
    for all vertices $v$. Thus, $T$ is a Seymour-tight orientation.
\end{proof}

Two vertices $u,v \in V(D)$ lie in the same \textsl{strongly connected component} of a directed graph $D$ if there exists a directed walk from $u$ to $v$ and a directed walk from $v$ to $u$. A directed graph $D$ is \textsl{strongly connected} if it has only one strongly connected component. Every directed graph can be partitioned into its strongly connected components $A', \ldots, A_k$. The \textsl{condensation of $D$} is the graph where every strongly connected component is contracted into one vertex. There is an arc $A_i \rightarrow A_j$ in the condensation if and only if there exist $v \in A_i$ and $w \in A_j$ such that $v \rightarrow w$ is an arc in $D$. 

By definition, the condensation of $D$ is a directed acyclic graph. Let $\mathcal{A}_k$ be the set of $A_i$ that can be reached from $A_k$ in the condensation of $D$. If $D$ is a Seymour-tight orientation, then $\mathcal{A}_k$ is also a Seymour-tight orientation for all $k$. In particular, all $A_i$ that are sink vertices in the condensation of $D$ are Seymour-tight orientations. Hence, every strongly disconnected Seymour-tight orientation can be formed by starting with a strongly connected Seymour-tight orientation and then adding `source' components to it, see Section~\ref{Sec: Strongly disconnected}.

In Sections~\ref{Section: Lex prod} and~\ref{Sec: gen lex}, we show that the class of Seymour-tight orientations is closed under taking (generalized) lexicographic products. 
If $D$ is strongly connected, then $D[G_1, \ldots, G_k]$ is also strongly connected even if (some of) the $G_i$'s are not. Hence, we can construct a strongly connected Seymour-tight orientations from strongly disconnected Seymour-tight orientations $G_1,\ldots, G_k$ each on $n$ vertices by taking a generalized lexicographic product $D[G_1,\ldots,G_k]$ where $D$ is a strongly connected Seymour-tight orientation on $k$ vertices.


\section{Lexicographic products}\label{Section: Lex prod}
Let $D$ and $G$ be two directed graphs, then the \textit{lexicographic product} of $D$ and $G$, denoted $D[G]$ satisfies $V(D[G])=V(D) \times V(G)$. There is a directed edge from $(v,i)$ to $(w,j)$ if and only if there is a directed edge from $v$ to $w$ in $D$ or $v=w$ and there is a directed edge from $i$ to $j$ in $G$~\cite{bang2018classes}. Hence, the lexicographic product of two orientations is again an orientation. Moreover, the underlying graph of $D[G]$ is the lexicographic product of the underlying graph of $D$ with the underlying graph of $G$.

\begin{Lemma}\label{Seymour lex}
Let $D$ and $G$ be two Seymour-tight orientations. Then the lexicographic product $D[G]$ is also a Seymour-tight orientation. Moreover, if $D$ is a strongly connected Seymour-tight orientation, then $D[G]$ is also strongly connected.
\end{Lemma}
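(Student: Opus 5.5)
We compute the first and second out-neighbourhoods of an arbitrary vertex $(v,i)$ of $D[G]$ directly from the definition of the lexicographic product. Since both $D$ and $G$ are orientations, $D[G]$ is an orientation, as already noted. By definition,
\[
\NGi{1}{D[G]}{(v,i)} = \left(\NGi{1}{D}{v} \times V(G)\right) \cup \left(\{v\} \times \NGi{1}{G}{i}\right),
\]
and this union is disjoint. Hence $|\NGi{1}{D[G]}{(v,i)}| = |V(G)|\,|\NGi{1}{D}{v}| + |\NGi{1}{G}{i}|$.

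For the second neighbourhood, we first determine which vertices $(w,j)$ are reachable in exactly two steps from $(v,i)$, according to where the middle vertex $(u,k)$ lies.
\begin{itemize}
\item If $u \in \NGi{1}{D}{v}$, then every $(w,j)$ with $w \in \NGi{1}{D}{u}$ is reached, and so is every $(u,j)$ with $j \in \NGi{1}{G}{k}$. The latter are already first out-neighbours, since $u \in \NGi{1}{D}{v}$.
\item If $u = v$ and $k \in \NGi{1}{G}{i}$, then we reach $(w,j)$ for all $w \in \NGi{1}{D}{v}$, again already first out-neighbours, and $(v,j)$ for $j \in \NGi{1}{G}{k}$.
\end{itemize}
So the set reachable in at most two steps is
\[
\bigl((\NGi{1}{D}{v}\cup \NGi{2}{D}{v}\cup\{v\}\text{-part}) \times V(G)\bigr) \cup \bigl(\{v\}\times(\NGi{1}{G}{i}\cup \NGi{2}{G}{i} \cup \ldots)\bigr).
\]
The delicate point is what happens in the fibre $\{v\} \times V(G)$. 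Here $v$ itself may lie in the set of vertices reachable in exactly two steps in $D$, namely when $v$ lies on a directed $2$-walk $v\to u\to v$. This cannot happen, because $D$ is an orientation and so has no $2$-cycles. Consequently the fibre over $v$ is reached only through $G$.

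Removing the first out-neighbourhood and $(v,i)$ itself then gives
\[
\NGi{2}{D[G]}{(v,i)} = \left(\NGi{2}{D}{v}\times V(G)\right) \cup \left(\{v\}\times \NGi{2}{G}{i}\right).
\]
Its size is $|V(G)|\,|\NGi{2}{D}{v}| + |\NGi{2}{G}{i}|$. Using $|\NGi{1}{D}{v}|=|\NGi{2}{D}{v}|$ and $|\NGi{1}{G}{i}|=|\NGi{2}{G}{i}|$, the two sizes agree, so $D[G]$ is Seymour-tight. The main obstacle in this plan is the careful bookkeeping showing that nothing outside these two pieces enters the second neighbourhood, and that the two pieces are disjoint from the first neighbourhood. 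Both facts rest on the absence of $2$-cycles in $D$ and on $\NGi{1}{D}{v}\cap\NGi{2}{D}{v}=\emptyset$.

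For strong connectivity, take $(v,i)$ and $(w,j)$.
\begin{itemize}
\item If $v\neq w$, a directed path $v=v_0\to\cdots\to v_m=w$ in $D$ lifts to the path $(v_0,i)\to(v_1,j)\to\cdots\to(v_m,j)$.
\item If $v=w$, then, since $D$ is strongly connected and has more than one vertex (a single vertex cannot be an orientation with a nontrivial cycle; the trivial case $|V(D)|=1$ can be handled separately or excluded), there is a closed walk $v\to u\to\cdots\to v$ in $D$ with $u\neq v$. It lifts to $(v,i)\to(u,j)\to\cdots\to(v,j)$.
\end{itemize}
Hence $D[G]$ is strongly connected.
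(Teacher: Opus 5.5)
Your proof is correct and is essentially the paper's argument: you compute the first and second out-neighbourhoods of $(v,i)$ fibre by fibre and compare their sizes. You also make explicit two points the paper leaves implicit, namely that the absence of $2$-cycles in $D$ keeps the fibre over $v$ from being reached through $D$, and the path-lifting argument for strong connectivity. The one correction is that the case $|V(D)|=1$ cannot be ``handled separately'' and must be excluded, since $D=K_1$ and $G=E_2$ give $D[G]\cong E_2$, which is not strongly connected.
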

\begin{proof}
Let $D,G$ be two Seymour-tight orientations. Let $(v,i)$ be a vertex in $D[G]$. Then
    \[\NGi{1}{D[G]}{(v,i)} = \left\{(w,j) \, \middle\vert \, w \in \NGi{1}{D}{v} \text{ or } v=w \text{ and } j \in \NGi{1}{G}{i}\right\}.\]
    In particular, $|\NGi{1}{D[G]}{(v,i)}| = |V(G)| \cdot |\NGi{1}{D}{v}| + |\NGi{1}{G}{i}|$.

    All vertices $(w,j)$ that can be reached from $(v,i)$ in at most two steps satisfy either $w \in \NGi{1}{D}{v} \, \cup \, \NGi{2}{D}{v}$ or $w=v$ and $j \in \NGi{1}{G}{i} \, \cup \, \NGi{2}{G}{i}$. By deleting those in $\NGi{1}{D[G]}{(v,i)}$, we obtain
    \[\NGi{2}{D[G]}{(v,i)} = \left\{(w,j) \, \middle\vert \, w \in \NGi{2}{D}{v} \text{ or } v=w \text{ and } j \in \NGi{2}{G}{i}\right\},\]
    thus implying $|\NGi{2}{D[G]}{(v,i)}| = |V(G)| \cdot |\NGi{2}{D}{v}| + |\NGi{2}{G}{i}|$. Since $D$ and $G$ are Seymour-tight orientations, we have
    \begin{align*} |\NGi{1}{D[G]}{(v,i)}| &= |V(G)| \cdot |\NGi{1}{D}{v}| + |\NGi{1}{G}{i}|\\&= |V(G)| \cdot |\NGi{2}{D}{v}| + |\NGi{2}{G}{i}|=  |\NGi{2}{D[G]}{(v,i)}| \end{align*}
    for all vertices $(v,i)$. Hence, we obtain that $D[G]$ is also a Seymour-tight orientation.

    Moreover, by definition of the lexicographic product of directed graphs, we obtain that if $D$ is a strongly connected, then also $D[G]$ is strongly connected.
\end{proof}

\subsection{Putative counterexamples}\label{Section: Putative counterexamples}

With the lexicographic product, we can not only construct new Seymour-tight orientations, but also obtain counterexamples from smaller ones. Using similar arguments as in the proof of Lemma~\ref{Seymour lex}, we can show the following.
\begin{Theorem}[Theorem 4.2~\cite{brantner2009contributions}]\label{Lex product counterexample}
If $O$ is a counterexample to Seymour's second neighbourhood conjecture and $G$ is a Seymour orientation, then $O[G]$ and $G[O]$ are counterexamples.
\end{Theorem}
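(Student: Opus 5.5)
The plan is to reuse the neighbourhood computation from the proof of Lemma~\ref{Seymour lex}. That computation did not use tightness of $D$ or $G$; it only used the definition of the lexicographic product. So for any two orientations $D$ and $G$ and any vertex $(v,i)$ of $D[G]$, it gives the exact identities
\begin{align*}
|\NGi{1}{D[G]}{(v,i)}| &= |V(G)|\cdot|\NGi{1}{D}{v}| + |\NGi{1}{G}{i}|,\\
|\NGi{2}{D[G]}{(v,i)}| &= |V(G)|\cdot|\NGi{2}{D}{v}| + |\NGi{2}{G}{i}|.
\end{align*}
I will first restate these identities for arbitrary orientations, checking that each step of the earlier argument is purely structural. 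The second-neighbourhood description is the key point. The vertices reachable in at most two steps from $(v,i)$ are the $(w,j)$ with $w\in \NGi{1}{D}{v}\cup\NGi{2}{D}{v}$, together with the $(v,j)$ with $j\in\NGi{1}{G}{i}\cup\NGi{2}{G}{i}$. The case $w=v$ via a detour through another copy is impossible: an arc $v\to w$ and then $w\to v$ would form a 2-cycle in the orientation $D$.

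With the identities in hand, the two cases are just inequalities. For $O[G]$ with $O$ a counterexample and $G$ Seymour, each vertex $(v,i)$ satisfies $|\NGi{1}{O}{v}|\ge |\NGi{2}{O}{v}|+1$ and $|\NGi{1}{G}{i}|\ge|\NGi{2}{G}{i}|$. Since $|V(G)|\ge 1$, we get
\[
|\NGi{1}{O[G]}{(v,i)}| \ge |V(G)|\bigl(|\NGi{2}{O}{v}|+1\bigr)+|\NGi{2}{G}{i}| > |\NGi{2}{O[G]}{(v,i)}|.
\]
For $G[O]$, each vertex $(v,i)$ with $v\in V(G)$ and $i\in V(O)$ satisfies
\[
|\NGi{1}{G[O]}{(v,i)}| = |V(O)|\cdot|\NGi{1}{G}{v}|+|\NGi{1}{O}{i}|.
\]
Here $|\NGi{1}{G}{v}|\ge|\NGi{2}{G}{v}|$ and $|\NGi{1}{O}{i}|>|\NGi{2}{O}{i}|$ give the strict inequality. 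Finally, both products are orientations, as already noted in the paper, so they are genuine counterexamples.

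The only step needing care is making sure the formula for the second neighbourhood in Lemma~\ref{Seymour lex} really holds for general orientations. In particular, I need to rule out overlap between the first- and second-neighbourhood contributions, and vertices $(v,j)$ reachable within the same copy by leaving and returning. The argument above handles this using the absence of 2-cycles in the outer factor. Everything else is routine arithmetic.
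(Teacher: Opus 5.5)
Your proposal is correct and takes the paper's approach: the paper also just reuses the neighbourhood-size identities from the proof of Lemma~\ref{Seymour lex}, which hold for arbitrary orientations, and compares them term by term as you do. Your explicit remark that the absence of 2-cycles in the outer factor prevents a walk from leaving the copy of $v$ and returning to it fills in a point the paper leaves implicit.
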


In particular, we can take $G$ to be Seymour-tight. By choosing $G$ appropriately, we can construct counterexamples that satisfy some interesting properties. 

\begin{Cor}\label{Close to reg tournament} If Seymour's second neighbourhood conjecture is false, then there exists $k \in \mathbb{N}$ such that there are infinitely many counterexamples $O$ whose minimum out-degree is at least $\frac{V(G)}{2}-k$.
\end{Cor}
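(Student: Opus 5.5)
Fix a counterexample $O$ on $m$ vertices. The plan is to take lexicographic products $O[T]$, where $T$ is a regular tournament on $n$ vertices and $n$ runs over infinitely many odd integers. By the earlier lemma, every regular tournament is Seymour-tight, and in particular a Seymour orientation. Theorem~\ref{Lex product counterexample} therefore makes each $O[T]$ a counterexample. Regular tournaments exist for every odd $n$ (for instance $\dir{C_n}^{(n-1)/2}$, which is Seymour-tight by Lemma~\ref{lem: kth power dicycle}). This gives infinitely many counterexamples on $mn$ vertices, and the only remaining work is to control their minimum out-degree.

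For the degree computation we use the formula from the proof of Lemma~\ref{Seymour lex}:
\[
|\NGi{1}{O[T]}{(v,i)}| = n\,|\NGi{1}{O}{v}| + \tfrac{n-1}{2} \ \ge\ n\,\delta^+(O) + \tfrac{n-1}{2}.
\]
This is a proportion $\delta^+(O)/m$ of the $mn$ vertices, and it is not close to half. So the product in this order gives the wrong shape.

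Instead take $T[O]$, with the tournament outside. The vertex count is again $nm$, and the out-degree of $(i,v)$ is
\[
m\,\tfrac{n-1}{2} + |\NGi{1}{O}{v}| \ \ge\ \tfrac{nm}{2} - \tfrac{m}{2} + \delta^+(O) \ \ge\ \tfrac{|V(T[O])|}{2} - \tfrac{m}{2}.
\]
Setting $k = \lceil m/2 \rceil$, which depends only on the fixed $O$, gives minimum out-degree at least $\frac{|V|}{2}-k$ for every odd $n$. Theorem~\ref{Lex product counterexample} applies to this order as well, since it covers $G[O]$. The resulting graphs are pairwise distinct because their orders $nm$ differ.

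The only delicate step is choosing the order of the product: the tournament must be the outer factor so that it dominates the out-degree. After that, the argument is bookkeeping. The $V(G)$ in the statement should be read as the vertex set of $O$ itself.
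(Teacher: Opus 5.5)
Your proposal is correct and follows the paper's proof: you take $T[O]$ with a regular tournament $T$ on $n$ vertices as the outer factor, invoke Theorem~\ref{Lex product counterexample}, and bound the minimum out-degree below by $m\frac{n-1}{2} = \frac{nm}{2}-\frac{m}{2}$. The differences are minor: the paper uses $k=|V(O)|$ instead of $\lceil m/2\rceil$, your detour through $O[T]$ is unnecessary but harmless, and the $V(G)$ in the statement means the vertex set of the counterexample in the family, i.e.\ $T[O]$, which is how your computation already treats it.
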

\begin{proof}
    Let $O$ be a counterexample to Seymour's second neighbourhood conjecture and set $k = |V(O)|$. Let $T$ be a regular tournament on $m$ vertices. By Theorem \ref{Lex product counterexample}, $T[O]$ is also a counterexample to Seymour's second neighbourhood conjecture. The number of vertices of this graph is $m|V(O)|$, while  its minimum out-degree is $\frac{m-1}{2}|V(O)|= \frac{m|V(O)|}{2}-\frac{|V(O)|}{2}=\frac{V(T[O])}{2}-\frac{k}{2}$. Since there are infinitely many regular tournaments, the statement holds.
\end{proof}

Similarly, by taking $G=\dir{C_n}$, we observe that there exists $k \in \mathbb{N}$ such that there are infinitely many counterexamples of maximum degree at most $k$~\cite{espuny2025seymour}. By repeatedly taking the lexicographic product of counterexamples, we obtain the following result.

\begin{Lemma}\label{lem: sequence of counterexamples}
    If Seymour's second neighbourhood conjecture is false, then there exists $\epsilon > 0$ such that for all $k \in \mathbb{N}$, there exists a strongly connected orientation $O$ with minimum out-degree at least $k$ such that $(1-\epsilon) |\NGi{1}{O}{v}| \geq |\NGi{2}{O}{v}|$ for all $v \in V(O)$.
\end{Lemma}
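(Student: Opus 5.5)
Assume a counterexample $O$ exists. Since $O$ is finite and $|\NGi{1}{O}{v}| > |\NGi{2}{O}{v}|$ for every $v$, we can choose $\epsilon>0$ with $(1-\epsilon)|\NGi{1}{O}{v}| \geq |\NGi{2}{O}{v}|$ for all $v$. For instance, take $\epsilon = \min_v 1/|\NGi{1}{O}{v}|$, which works because the out-degrees are integers and $|\NGi{2}{O}{v}| \le |\NGi{1}{O}{v}|-1$. This step should also ensure $O$ is strongly connected and has no sinks. A counterexample has no sink, since a sink would satisfy $0 \ge 0$. Pass to a sink strongly connected component $A$ of the condensation. For $v\in A$, all out-neighbours and second out-neighbours of $v$ lie in $A$, so $A$ is itself a strongly connected counterexample. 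We replace $O$ by $A$. Then every out-degree is at least $1$, and $A$ is not a single vertex.

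The natural approach is to iterate lexicographic products: set $O_1 = O$ and $O_{t+1} = O[O_t]$, or $O_t[O]$. The difficulty is that Theorem~\ref{Lex product counterexample} only gives a strict inequality, and we need a uniform $\epsilon$. So I would redo the computation in Lemma~\ref{Seymour lex} for general orientations $D,G$. That computation gives
\[|\NGi{1}{D[G]}{(v,i)}| = |V(G)|\,|\NGi{1}{D}{v}| + |\NGi{1}{G}{i}|, \qquad |\NGi{2}{D[G]}{(v,i)}| = |V(G)|\,|\NGi{2}{D}{v}| + |\NGi{2}{G}{i}|.\]
The derivation of the second identity in Lemma~\ref{Seymour lex} never used tightness, only the structure of the product. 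It does need the second out-neighbourhood inside the fibre of $v$ to come only from $G$, which holds because $D$ is an orientation with no loops. If $D$ and $G$ both satisfy the $(1-\epsilon)$ inequality with the same $\epsilon$, then summing the two scaled inequalities shows $D[G]$ satisfies it too. By induction, every $O_t$ satisfies it with the same $\epsilon$.

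It remains to check the minimum out-degree and strong connectivity. Since $O$ has minimum out-degree $\delta\ge 1$, the formula gives $\delta^+(O[O_t]) \ge |V(O_t)|\,\delta \ge |V(O_t)| = |V(O)|^t$. This tends to infinity because $|V(O)|\ge 2$. Strong connectivity of $O[O_t]$ follows from the final remark in the proof of Lemma~\ref{Seymour lex}: if $D$ is strongly connected, then so is $D[G]$. That argument does not use tightness. Hence, for any $k$, choosing $t$ large gives the required $O$.

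The main obstacle is ensuring the uniform constant $\epsilon$ survives the iteration. The key point is that the inequality $(1-\epsilon)|N^1|\ge|N^2|$ is linear and homogeneous, so it is preserved under the nonnegative combination appearing in the product formulas. Compare this with the ratio $|N^2|/|N^1|$, which could otherwise drift towards $1$. The preliminary reduction to a strongly connected sink component is the only other point needing care.
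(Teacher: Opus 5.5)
Your proposal is correct and follows the paper's proof essentially step for step. Your $\epsilon=\min_v 1/|\NGi{1}{O}{v}|$ is exactly the paper's $\epsilon=1/\Delta$. You also use the same iteration $O_{t+1}=O[O_t]$, the same preservation of the linear inequality under the product formulas, and the same bound $\delta^+(O_{t+1})\ge \delta\,|V(O)|^t$. Your explicit passage to a sink strongly connected component with at least two vertices is a worthwhile addition: the paper never argues strong connectivity in this proof and relies only implicitly on $O$ being a minimal counterexample.
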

\begin{proof}
    Let $O$ be a minimal counterexample to Seymour's second neighbourhood conjecture. Suppose $O$ has $n$ vertices and denote its maximal out-degree with $\Delta$ and its minimum out-degree with $\delta$. Every vertex of out-degree $d$ has at most $d-1$ vertices in its second neighbourhood.  Set $\epsilon = \frac{1}{\Delta}$, then $(1-\epsilon)d = \frac{\Delta-1}{\Delta} d \geq d-1$, thus  $(1-\epsilon) |\NGi{1}{O}{v}| \geq |\NGi{2}{O}{v}|$ for all $v \in V(O)$. 

    We define a sequence of graphs $O_i$, where $O_1=O$ and $O_{k+1}=O[O_k]$ for all $k$. Suppose that $(1-\epsilon) |\NGi{1}{O_i}{v}| \geq |\NGi{2}{O_i}{v}|$ for all $v \in O_i$ and $i \in \{1,\ldots,k\}$. Let $(v,w) \in O_{k+1}=O[O_k]$ such that $v \in O$ and $w \in O_k$. Then by definition of the lexicographic product
    \begin{align*}|\NGi{1}{O_{k+1}}{(v,w)}| &= |\NGi{1}{O}{v}| \cdot |V(O_k)|+|\NGi{1}{O_k}{w}|.\\
    |\NGi{2}{O_{k+1}}{(v,w)}| &= |\NGi{2}{O}{v}| \cdot |V(O_k)|+|\NGi{2}{O_k}{w}|.
    \end{align*}
    Therefore,
    \begin{align*}(1-\epsilon)|\NGi{1}{O_{k+1}}{(v,w)}| &= (1-\epsilon)|\NGi{1}{O}{v}| \cdot |V(O_k)|+ (1-\epsilon)|\NGi{1}{O_k}{w}|\\
    &\geq  |\NGi{2}{O}{v}| \cdot |V(O_k)|+|\NGi{2}{O_k}{w}|=|\NGi{2}{O_{k+1}}{(v,w)}| 
    \end{align*}
    Thus, also $O_{k+1}$ satisfies this property. By induction, every graph $O_i$ satisfies this property. Moreover, note that $|V(O_k)|=n^k$ for all $k$ and therefore the minimum degree of $O_k$ is at least $\delta n^{k-1}$ which goes to $\infty$ as $k \rightarrow \infty$.
\end{proof}

So Seymour's second neighbourhood conjecture is equivalent to the following conjecture:

\begin{Con}
    Let $\epsilon >0$ be arbitrary. Then every oriented graph $G$ has at least one vertex satisfying $|\NGi{2}{G}{v}| \geq (1-\epsilon) |\NGi{1}{G}{v}|$.
\end{Con}

\subsection{Induced subgraphs}\label{Sec: induced subgraphs}
In this subsection, we prove that every oriented graph is an induced subgraph of a Seymour-tight orientation.

\begin{Lemma}\label{Every graph is induced subgraph}
    Every oriented graph $D$ is an induced subgraph of a strongly connected Seymour-tight orientation.
\end{Lemma}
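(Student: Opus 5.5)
The plan is to build the Seymour-tight orientation directly. I take $D$ together with a large power of a directed cycle $H=\dir{C_m}^k$, and add arcs only from $D$ to $H$, in such a way that every vertex of $D$ becomes tight. This produces a Seymour-tight orientation $G$ containing $D$ as an induced subgraph, but $G$ is not strongly connected. I then fix this by a lexicographic product with $\dir{C_3}$ and Lemma~\ref{Seymour lex}.

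\textbf{Construction.} Let $n=|V(D)|$ and fix $k\ge n$. Let $m$ be much larger than $k+n$, say $m>2(2k+n)$, and let $H=\dir{C_m}^k$ on vertices $h_1,\dots,h_m$; by Lemma~\ref{lem: kth power dicycle} it is Seymour-tight. For $v\in V(D)$ write $d(v)=|\NGi{1}{D}{v}|$ and $s(v)=|\NGi{2}{D}{v}|$, and set
\[a_v := k+s(v)-d(v).\]
Since $d(v)\le n-1\le k$, we have $1\le a_v\le k+n$. Form $G$ from the disjoint union $D\cup H$ by adding the arcs $v\to h_1,\dots,h_{a_v}$ for every $v\in V(D)$. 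There are no arcs from $H$ to $D$, so $G$ is an orientation, $D$ is induced in $G$, and the out-neighbourhoods (first and second) of the vertices of $H$ are unchanged. Hence every vertex of $H$ remains tight.

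\textbf{Checking tightness at $v\in V(D)$.} First, $\NGi{1}{G}{v}=\NGi{1}{D}{v}\cup\{h_1,\dots,h_{a_v}\}$, which has size $d(v)+a_v$. For the second neighbourhood:
\begin{itemize}
\item Within $D$, the new second neighbours are exactly $\NGi{2}{D}{v}$, since $H$ sends no arcs back to $D$.
\item Within $H$, the out-neighbours in $H$ of the interval $h_1,\dots,h_{a_v}$ lying outside that interval are exactly $h_{a_v+1},\dots,h_{a_v+k}$. This uses $m$ large, so that no wrap-around occurs.
\item Each $u\in\NGi{1}{D}{v}$ contributes $h_1,\dots,h_{a_u}$. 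Since $a_u\le a_v+n\le a_v+k$, these vertices are already in $\{h_1,\dots,h_{a_v+k}\}$.
\end{itemize}
Therefore $|\NGi{2}{G}{v}|=s(v)+k=d(v)+a_v=|\NGi{1}{G}{v}|$. This bookkeeping is the main point to verify carefully. The key observation is that the "overflow" of an interval of any length $a\ge1$ in $\dir{C_m}^k$ always has size exactly $k$. This is what allows $a_v$ to absorb the mismatch $s(v)-d(v)$, whatever its sign.

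\textbf{Strong connectivity.} $G$ is not strongly connected, since nothing returns from $H$ to $D$. Consider $\dir{C_3}[G]$. By Lemma~\ref{Seymour lex}, applied with the strongly connected Seymour-tight orientation $\dir{C_3}$, it is a strongly connected Seymour-tight orientation. Moreover, fixing one vertex $c$ of $\dir{C_3}$, the set $\{c\}\times V(G)$ induces a copy of $G$, and hence contains an induced copy of $D$. This completes the plan.
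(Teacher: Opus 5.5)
Your overall strategy is sound, but the third bullet of your tightness check is false as written. For an arc $v\to u$ of $D$, the inequality $a_u\le a_v+n$ does not hold in general, and with $k=n$ your graph $G$ is not Seymour-tight. For example, let $D$ have the six vertices $v,u,x_1,x_2,x_3,x_4$ and arcs $v\to u$, $v\to x_i$ for $i=1,\dots,4$, $u\to x_1$, and $x_1\to x_j$ for $j=2,3,4$. Then $d(v)=5$, $s(v)=0$, $d(u)=1$ and $s(u)=3$, so $a_u-a_v=7>n$. Taking $k=n=6$ gives $a_v=1$ and $a_u=8$. Hence $\NGi{1}{G}{v}=\{u,x_1,\dots,x_4,h_1\}$ has $6$ elements, while $\NGi{2}{G}{v}=\{h_2,\dots,h_8\}$ has $7$. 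The vertex $h_8$ is reached through $u$ and lies beyond the overflow interval $\{h_2,\dots,h_7\}$ of $\{h_1\}$. So $v$ even violates the Seymour condition $|\NGi{1}{G}{v}|\ge|\NGi{2}{G}{v}|$.

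The repair is easy. Since $s(w)-d(w)\in[-(n-1),n-1]$ for every vertex $w$ of $D$, you always have $a_u-a_v\le 2n-2$. So choose $k\ge 2n$, which still gives $1\le a_v\le k+n-1$, and $m>2k+n$. Then the third bullet holds, and the rest of your argument goes through, including the final $\dir{C_3}[G]$ step, which is also how the paper finishes.

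With this fix your route is a genuine alternative to the paper's. The paper adds $n$ common sinks and a private sink $s_v$ for each $v$. It then places each $s_v$ in its own $k_v$-out-regular Seymour-tight gadget $\dir{C_3}[E_{k_v}]$. Each correction therefore feeds only into $v$'s own second neighbourhood, and no comparison between different vertices is needed. Your single shared gadget $H$ is more economical. However, the sharing is exactly what creates the cross-term ($a_u$ versus $a_v$), which must be controlled by taking $k$ large.
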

\begin{proof}
    Let $D$ be an arbitrary orientation. We construct a new graph $D'$ that contains $D$ as an induced subgraph. We start by adding $|V(D)|$ common sinks to $D$, i.e.~for all new sink vertices $s$ we have $\NGi[-]{1}{D}{s}=V(D)$. Moreover, for every $v \in V(D)$ we add a new sink $s_v$ such that $\NGi[-]{1}{D}{s_v}=v$ to obtain $D'$. Then all vertices $v \in V(D')$ satisfy $|\NGi{1}{D'}{v}|= |\NGi{1}{D}{v}|+|V(D)|+1$ and $|\NGi{2}{D'}{v}| = |\NGi{2}{D}{v}|+ |\NGi{1}{D}{v}|$. In particular, $|\NGi{1}{D'}{v}|-|\NGi{2}{D'}{v}| =: k_v \geq 0$. We now place the sinks $s_v$ in the $k_v$-regular Seymour-tight orientation $\dir{C_3}[E_{k_v}]$, by adding new vertices. By construction, $\NGi{2}{D'}{v}$ becomes $k_v$ larger, while $\NGi{1}{D'}{v}$ stays the same. Thus, in this new graph, we have $|\NGi{1}{D'}{v}| = |\NGi{2}{D'}{v}|$ for all $v \in V(D)$. All common sinks $s$ satisfy $|\NGi{1}{D'}{s}|=0=|\NGi{2}{D'}{s}|$. Lastly, for all vertices $w_v$ in the new Seymour-tight orientation of sink $s_v$, we have $|\NGi{1}{D'}{w_v}|=k_v=|\NGi{2}{D'}{w_v}|$ as these Seymour-tight orientations are sink parts of our graph $D'$. Hence, $D'$ is a Seymour-tight orientation that contains $D$ as an induced subgraph.

    To obtain a strongly connected Seymour-tight orientation we consider the graph $\dir{C_3}[D']$ which is strongly connected and contains $D'$ as an induced subgraph implying that it contains $D$ as an induced subgraph.
\end{proof}

Using this we can prove that if Seymour's second neighbourhood conjecture is false, then any orientation is an induced subgraph of a counterexample.

\Theoreminduced*
\begin{proof}
    Let $O$ be a vertex-minimal connected counterexample to Seymour's second neighbourhood conjecture. Then $O$ is strongly connected. By Lemma \ref{Every graph is induced subgraph}, there exists a Seymour-tight orientation $S$ such that $D$ is an induced subgraph of $S$. Now by Theorem \ref{Lex product counterexample} we find that $O[S]$ is a counterexample to Seymour's second neighbourhood conjecture. Moreover, by definition of the lexicographic product, we see that $S$ is an induced subgraph of $O[S]$ and therefore, also $D$ is an induced subgraph of $O[S]$. Since $O$ is strongly connected, we find that $O[S]$ is also strongly connected. In conclusion, $O[S]$ is a strongly connected counterexample to Seymour's second neighbourhood conjecture that has $D$ as an induced subgraph.
\end{proof}

\section{Generalized lexicographic products}\label{Sec: gen lex}

Let $X \subseteq V(D)$ be a set of vertices in a Seymour-tight orientation $D$. A vertex $v \in D-X$ is \textsl{uniform on X} if one of the following holds:
\begin{enumerate}[label=(\alph*),nosep]
    \item   all vertices in $X$ are in-neighbours of $v$;
    \item all vertices in $X$ are out-neighbours of $v$; or 
    \item no vertex in $X$ is an in- or out-neighbour of $v$.
\end{enumerate}

\begin{Lemma} Let $D$ be an orientation. Suppose that all vertices in $D-X$ are uniform on $X$. Let $v \in D-X$, then either all vertices of $X$ are in the second out-neighbourhood (resp. second in-neighbourhood) of $v$ or no vertex of $X$ in the second out-neighbourhood (resp. second in-neighbourhood) of $v$. 
\end{Lemma}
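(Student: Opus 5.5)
The plan is to show that membership of a vertex of $X$ in $\NGi{2}{D}{v}$ is decided by data that do not depend on which vertex of $X$ we pick. Fix $v \in D-X$ and two arbitrary vertices $x,x' \in X$; it suffices to show $x \in \NGi{2}{D}{v}$ implies $x' \in \NGi{2}{D}{v}$. By symmetry this gives the dichotomy. The in-neighbourhood version follows by applying the same argument to the converse digraph: reversing all arcs preserves uniformity on $X$, since conditions (a) and (b) swap and (c) is unchanged.

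First, because $v$ is uniform on $X$, either every vertex of $X$ is an out-neighbour of $v$ or none is. In the first case no vertex of $X$ lies in $\NGi{2}{D}{v}$, because $\NGi{2}{D}{v}$ excludes $\NGi{1}{D}{v}$, and we are done. Also $v \notin X$, so $x \neq v$ and $x' \neq v$. So assume no vertex of $X$ is an out-neighbour of $v$. Then $x \in \NGi{2}{D}{v}$ holds exactly when some $u \in \NGi{1}{D}{v}$ has $(u,x) \in E(D)$.

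Next, fix such a witness $u$. Since $v$ is not adjacent to any vertex of $X$ via an out-arc, $u \in \NGi{1}{D}{v}$ cannot lie in $X$, so $u \in D - X$. Thus $u$ is uniform on $X$. Because $x$ is an out-neighbour of $u$, case (b) must hold for $u$, so every vertex of $X$, in particular $x'$, is an out-neighbour of $u$. Together with $x' \notin \NGi{1}{D}{v} \cup \{v\}$, this gives $x' \in \NGi{2}{D}{v}$.

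The only delicate point is the step where the witness $u$ is shown to lie outside $X$. Uniformity is assumed only for vertices of $D-X$, so arcs inside $X$ are uncontrolled. The argument relies on the case split above: when $v$ has no out-neighbours in $X$, every out-neighbour of $v$, and hence every possible witness $u$, lies in $D-X$. Everything else is immediate from the definitions.
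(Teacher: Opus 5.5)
Your proof is correct and follows the same route as the paper's: take a witness $u$ with $v \to u \to x$, use uniformity of $u$ to obtain $u \to x'$, and use uniformity of $v$ to keep $x'$ out of $\NGi{1}{D}{v}$. Beyond the paper, you justify that the witness lies in $D-X$ (so that uniformity applies to it), which the paper uses tacitly, and you treat the in-neighbourhood case via the converse digraph, where the paper only says ``analogously''.
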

\begin{proof}
    Suppose that there exists a $x \in X$ such that $x \in \NGi{2}{D}{v}$. Let $x' \in X$, be arbitrary. By definition, $x \not \in \NGi{1}{D}{v}$. Since $v$ is uniform on $X$, we also have that $x' \not \in  \NGi{1}{D}{v}$. Moreover, there exists a $w$ such that there are arcs $v \rightarrow w$ and $w \rightarrow x$. Since $w$ is uniform on $X$, there is also an arc $w \rightarrow x'$. Hence, $x' \in \NGi{2}{D}{v}$ and we can conclude that if $v$ has at least one second out-neighbour that lies in $X$, then every vertex of $X$ is a second out-neighbour of $v$. The proof for second in-neighbourhoods follows analogously.
\end{proof}


So for any $v \in D-X$, we have that either $v$ is in the second out-neighbourhood (resp. second in-neighbourhood) of all vertices in $X$ or in the second out-neighbourhood (resp. second in-neighbourhood) of no vertex in $X$. In particular, the number of vertices in $D-X$ in the first and second neighbourhood of some vertex $x \in X$ does not depend on the particular vertex $x \in X$. If $D$ is a Seymour-tight orientation, there exists $k \in \mathbb{Z}$ such that for all $x \in X$, $|\NGi{1}{D|_X}{x}|-|\NGi{2}{D|_X}{x}|=k$. 

\begin{Lemma}\label{lem: replacement} Let $D$ be a Seymour orientation and let $X \subseteq V(D)$ be such that all vertices in $D-X$ are uniform on $X$. 
Let $D'$ be the orientation for which the induced digraph on the vertices of $X$ is replaced by an induced digraph $H$ on $X$ satisfying 
\[|\NGi{1}{H}{x}|-|\NGi{2}{H}{x}| \geq |\NGi{1}{D|_X}{x}|-|\NGi{2}{D|_X}{x}|\] for all $x \in H$. Then $D'$ is a Seymour orientation. \\
Further, if $D, D|_X$ and $H$ are all Seymour-tight orientations, then $D'$ is also Seymour-tight.
\end{Lemma}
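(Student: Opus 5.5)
The plan is to compare first and second out-neighbourhoods in $D$ and $D'$ vertex by vertex, splitting into vertices outside $X$ and vertices inside $X$. The two graphs agree on all arcs except those with both ends in $X$. So the key tool is the preceding lemma: since every vertex of $D-X$ is uniform on $X$, each $v\in D-X$ has either all of $X$ or none of $X$ in its second out-neighbourhood. The same uniformity holds in $D'$, because uniformity only concerns arcs between $X$ and $D-X$, and these are unchanged.

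\emph{Vertices $v\notin X$.} Here $\NGi{1}{D'}{v}=\NGi{1}{D}{v}$, since no arc leaving $v$ changed. For the second neighbourhood, a vertex $w$ is reached in two steps via some intermediate $u$. If $w\notin X$, or if $u\notin X$, the arcs used are unchanged. The only new or lost two-step paths are $v\to u\to w$ with $u,w\in X$, and these need $v\to u$ with $u\in X$. In that case $v$ sends arcs to all of $X$, so $X\subseteq \NGi{1}{}{v}$ in both graphs, and such $w$ are never second neighbours. Hence $\NGi{2}{D'}{v}=\NGi{2}{D}{v}$, and the Seymour (resp.\ tight) condition at $v$ is inherited from $D$.

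\emph{Vertices $x\in X$.} Split $\NGi{1}{D}{x}$ into the part inside $X$, which is $\NGi{1}{D|_X}{x}$, and the part outside $X$, denoted $A_x$; the outside part is unchanged in $D'$. For the second neighbourhood, split into the part inside $X$ and the part outside. For $w\notin X$, we have $w\in\NGi{2}{}{x}$ iff $w\notin A_x$, $w\neq x$, and some out-neighbour $u$ of $x$ has $u\to w$.

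The step I expect to be the main obstacle is that $u$ may lie in $X$, and the set of such $u$ changes from $D|_X$ to $H$. I would handle this with uniformity of $w$ on $X$: if $w$ receives an arc from some $u\in X$, then $w$ receives arcs from all of $X$, including $x$ itself, so $w\in A_x$. Hence intermediates in $X$ never create outside second neighbours, and the outside part $B_x$ of $\NGi{2}{}{x}$ depends only on $A_x$ and the unchanged arcs.

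For $w\in X$ the argument is parallel. If $w$ is reached via $u\notin X$, then $u$ is an out-neighbour of $x$ with $u\to w$. By uniformity $u$ sends arcs to all of $X$, including $x$; but $x\to u$ already holds, and an orientation has no 2-cycles, so this is impossible. Thus the inside part of $\NGi{2}{}{x}$ is exactly $\NGi{2}{D|_X}{x}$ in $D$, and $\NGi{2}{H}{x}$ in $D'$.

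Combining the two parts, for each $x\in X$,
\[|\NGi{1}{D'}{x}|-|\NGi{2}{D'}{x}| = |A_x|-|B_x| + |\NGi{1}{H}{x}|-|\NGi{2}{H}{x}| \geq |A_x|-|B_x| + |\NGi{1}{D|_X}{x}|-|\NGi{2}{D|_X}{x}| = |\NGi{1}{D}{x}|-|\NGi{2}{D}{x}| \geq 0.\]
So $D'$ is a Seymour orientation.

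For the tight statement, $D|_X$ and $H$ both being Seymour-tight make the two $X$-differences both zero. Then the displayed relation holds with equality, and the left side equals the $D$-difference, which is $0$ because $D$ is Seymour-tight. Together with the equalities already shown for $v\notin X$, this makes $D'$ Seymour-tight.
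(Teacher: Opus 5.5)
Your proof is correct and follows the same route as the paper. Vertices outside $X$ keep their first and second out-neighbourhoods, and for $x\in X$ both neighbourhoods split into an inside-$X$ part (governed by $D|_X$, respectively $H$) and an outside part that is unchanged. You also supply the justification the paper leaves implicit: intermediates in $X$ never create second neighbours outside $X$, and intermediates outside $X$ never create second neighbours inside $X$, the latter by your no-2-cycle argument.
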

\begin{proof}
Since all vertices in $D-X$ are uniform on $X$, the first and second neighbourhood of all vertices in $D-X$ remain unchanged. For $x\in X$, we have
\begin{align*}|\NGi{1}{D'}{x}|-|\NGi{2}{D'}{x}|&=|\NGi{1}{H}{x}|-|\NGi{2}{H}{x}|+|\NGi{1}{D'-H}{x}|-|\NGi{2}{D'-H}{x}|\\
&\geq |\NGi{1}{D|_X}{x}|-|\NGi{1}{D|_X}{x}|+|\NGi{1}{D-X}{x}|-|\NGi{2}{D-X}{x}|\\
&=|\NGi{1}{D}{x}|-|\NGi{2}{D}{x}|.
\end{align*}
Hence, $D'$ is also a Seymour orientation.

If $D$, $D|_X$ and $H$ are all Seymour-tight, then
\begin{align*}|\NGi{1}{D'}{x}|-|\NGi{2}{D'}{x}|&=|\NGi{1}{H}{x}|-|\NGi{2}{H}{x}|+|\NGi{1}{D'-H}{x}|-|\NGi{2}{D'-H}{x}|\\
&= 0+|\NGi{1}{D'-X}{x}|-|\NGi{2}{D'-X}{x}|\\
&=|\NGi{1}{D|_X}{x}|-|\NGi{1}{D|_X}{x}|+|\NGi{1}{D-X}{x}|-|\NGi{2}{D-X}{x}|\\
&=|\NGi{1}{D}{x}|-|\NGi{2}{D}{x}|=0.
\end{align*}
Hence, $D'$ is also Seymour-tight.
\end{proof}

Let $D[G]$ be the lexicographic product of two Seymour-tight orientations $D$ and $G$. Then for every $v \in D$, define the set $X_v = \{(v,i) \mid i \in V(G)\}$. 

\begin{Lemma}\label{lem: replace lex prod}
    Let $D[G]$ be the lexicographic product of two Seymour-tight orientations $D$ and $G$. Then for every $v$, $D[G]|_X$ is isomorphic to $G$. 
    Replacing the orientation on $X_v$ with another Seymour-tight orientation yields a new Seymour-tight orientation. 
\end{Lemma}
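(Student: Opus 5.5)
The plan is to read off the structure of the induced digraph on $X_v$ directly from the definition of the lexicographic product, and then invoke Lemma~\ref{lem: replacement} with $X = X_v$.

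For the first claim, fix $v \in V(D)$. Consider two vertices $(v,i),(v,j) \in X_v$. By definition of $D[G]$, there is an arc $(v,i)\to(v,j)$ if and only if either $v \to v$ is an arc of $D$, or $v=v$ and $i \to j$ is an arc of $G$. The first option is impossible because $D$ is an orientation and so has no loops. Hence $(v,i)\to(v,j)$ exactly when $i\to j$ in $G$. The map $(v,i)\mapsto i$ is therefore an isomorphism from $D[G]|_{X_v}$ onto $G$.

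Next I check that every vertex of $D[G]-X_v$ is uniform on $X_v$. Take $(w,j)$ with $w\neq v$. By the definition of the product, for every $i\in V(G)$:
\begin{itemize}
\item $(w,j)\to(v,i)$ is an arc if and only if $w\to v$ in $D$;
\item $(v,i)\to(w,j)$ is an arc if and only if $v\to w$ in $D$.
\end{itemize}
Neither condition depends on $i$. Since $D$ is an orientation, at most one of $w\to v$ and $v\to w$ holds. So $(w,j)$ falls into exactly one of the cases (a), (b), (c) of the uniformity definition.

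Now apply the second part of Lemma~\ref{lem: replacement} with the following inputs:
\begin{itemize}
\item the ambient graph is $D[G]$, which is Seymour-tight by Lemma~\ref{Seymour lex};
\item the replaced set is $X=X_v$;
\item $D[G]|_{X_v}\cong G$, which is Seymour-tight by hypothesis;
\item $H$ is any Seymour-tight orientation on the vertex set $X_v$.
\end{itemize}
For Seymour-tight $H$ and $G$, the hypothesis of the lemma reads $0\ge 0$ at every $x\in X_v$, so it holds trivially. The lemma then gives that the resulting orientation $D'$ is Seymour-tight.

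One subtlety needs a check: $D'$ must still be an orientation. Arcs inside $X_v$ come from $H$, which has no $2$-cycles. Arcs between $X_v$ and the rest of $D[G]$ are unchanged, and the rest of $D[G]$ is untouched, so no $2$-cycles are created.

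The main obstacle is a caveat about the replacement. The new orientation must live on the same vertex set $X_v$, so $H$ must have $|V(G)|$ vertices. I would state this explicitly, reading ``another Seymour-tight orientation'' as one on $|V(G)|$ vertices. With that reading the argument is complete, and no further calculation is needed beyond Lemma~\ref{lem: replacement}.
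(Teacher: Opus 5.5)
Your proposal is correct and follows the same route as the paper. Like the paper, you read off $D[G]|_{X_v}\cong G$ from the definition of the lexicographic product, check that every vertex outside $X_v$ is uniform on $X_v$, and apply the Seymour-tight case of Lemma~\ref{lem: replacement} to $D[G]$, which is Seymour-tight by Lemma~\ref{Seymour lex}; your extra checks (the explicit uniformity cases, that no $2$-cycles arise, and that the replacement must have $|V(G)|$ vertices) spell out details the paper leaves implicit.
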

\begin{proof} By Lemma \ref{Seymour lex}, the orientation $D[G]$ is a Seymour orientation. By definition of the lexicographic product, the orientation induced by $X_v$ is isomorphic to $G$ and is thus Seymour-tight. Moreover, every vertex in $D[G]-X_v$ is uniform on $X_v$. Hence, by Lemma \ref{lem: replacement} replacing the orientation on vertex set $X_v$ by another Seymour-tight orientation on $|X_v|$ vertices again yields a Seymour-tight orientation.
\end{proof}

\begin{Def}
Let $D$ be a directed graph, where $V(D)=[n]$, and let $G_1, \ldots, G_n$ be a sequence of directed graphs. Then the \textsl{generalized lexicographic product} $D[G_1, \ldots, G_n]$ is the graph where we replace every vertex $i$ of $D$ with the graph $G_i$. Moreover, there is an edge from $(i,v)$ to $(j,w)$ if and only if there is an edge from $i$ to $j$ in $D$ or $i=j$ and there is an edge from $v$ to $w$ in $G_i$.
\end{Def} 

The generalized lexicographic product is also known as $H$-join. By applying Lemma \ref{lem: replace lex prod} to every set $X_v$ in the lexicographic product $D[G]$, we obtain the following.

\begin{Cor}\label{vervang alle punten}
    Let $D$ be a Seymour-tight orientation on $n$ vertices. Let $G_1,\ldots, G_n$ be a sequence of Seymour-tight orientations on $k$ vertices. Then the orientation $D[G_1, \ldots, G_n]$ is also a Seymour-tight orientation.
\end{Cor}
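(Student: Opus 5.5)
The plan is to start from the ordinary lexicographic product $D[G]$ and replace its blocks one at a time. Fix any Seymour-tight orientation $G$ on $k$ vertices; for instance $G=G_1$ will do. By Lemma~\ref{Seymour lex}, $D[G]$ is a Seymour-tight orientation. Its vertex set is partitioned into the sets $X_1,\ldots,X_n$, where $X_v=\{(v,i)\mid i\in V(G)\}$, and each induced subgraph $D[G]|_{X_v}$ is isomorphic to $G$. Identify each $V(G_v)$ with $V(G)$ via an arbitrary bijection; this is possible since all the $G_v$ have exactly $k$ vertices.

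Next, define a sequence $D_0=D[G],D_1,\ldots,D_n$. For each $v$, the digraph $D_v$ is obtained from $D_{v-1}$ by replacing the orientation induced on $X_v$ with a copy of $G_v$. By construction, $D_n=D[G_1,\ldots,G_n]$. This holds because arcs between different blocks are never altered, and they are exactly the arcs $(i,a)\to(j,b)$ with $i\to j$ in $D$.

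We prove by induction on $v$ that each $D_v$ is Seymour-tight, using Lemma~\ref{lem: replacement} rather than Lemma~\ref{lem: replace lex prod}. Lemma~\ref{lem: replace lex prod} is stated only for the genuine product $D[G]$, whereas after the first replacement we no longer have one. To apply Lemma~\ref{lem: replacement} at step $v$, we check three hypotheses:
\begin{enumerate}[label=(\roman*),nosep]
\item $D_{v-1}$ is Seymour-tight, which is the induction hypothesis;
\item $D_{v-1}|_{X_v}$ is Seymour-tight, which holds because this block has not yet been touched, so it is still isomorphic to $G$;
\item every vertex outside $X_v$ is uniform on $X_v$.
\end{enumerate}
Condition (iii) is the key point. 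A vertex $(w,j)$ with $w\neq v$ sends arcs to all of $X_v$ if $w\to v$ in $D$. It receives arcs from all of $X_v$ if $v\to w$. Otherwise it has no neighbours in $X_v$. The earlier replacements changed only arcs inside other blocks, so this property is preserved throughout the sequence. Finally, $G_v$ is Seymour-tight by assumption, so Lemma~\ref{lem: replacement} gives that $D_v$ is Seymour-tight.

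The only real obstacle is the bookkeeping in hypothesis (ii): making sure the block $X_v$ still induces a Seymour-tight orientation at the moment it is replaced. Performing the replacements in order and noting that blocks are disjoint handles this. Alternatively, one could start with $G=G_1$, so that the first replacement is trivial, but this changes nothing essential. With the induction complete, $D_n=D[G_1,\ldots,G_n]$ is Seymour-tight.
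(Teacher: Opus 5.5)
Your proof is correct and follows the paper's route: form $D[G_1]$ with Lemma~\ref{Seymour lex}, then replace the blocks $X_2,\ldots,X_n$ one at a time. You apply Lemma~\ref{lem: replacement} directly and check at each step that the untouched block is still Seymour-tight and that outside vertices remain uniform; this is a slightly more careful version of the paper's repeated appeal to Lemma~\ref{lem: replace lex prod}, which is literally stated only for a genuine lexicographic product.
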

\begin{proof}
    By Lemma \ref{Seymour lex}, we obtain that the lexicographical product $D[G_1]$ is Seymour-tight. For every $i = 2,\ldots,n$, we sequentially replace the orientation of $X_i$ by the Seymour-tight orientation $G_i$. By Lemma \ref{lem: replace lex prod}, this new orientation is Seymour-tight. 
\end{proof}

Using this, we can construct infinitely many non-regular strongly connected Seymour-tight orientations, since the $G_i$ can be chosen arbitrarily. Taking for example $D=\dir{C_3}$, $G_1=G_2 = E_3$ and  $G_3 = \dir{C_3}$, we get a Seymour-tight orientation on 9 vertices where some vertices have out-degree 3 while others have out-degree 4, see Figure~\ref{fig:non_reg_example}. Or we might have $D=\dir{C_4}$, $G_1 =\dir{C_4}$, $G_2= \dir{C_3} \cup K_1$, $G_3=E_4$ and $G_4$ consists of $\dir{C_3}$ with an extra vertex that has one outgoing arc pointing towards a vertex on this cycle, see Figure~\ref{fig:non_reg2}.

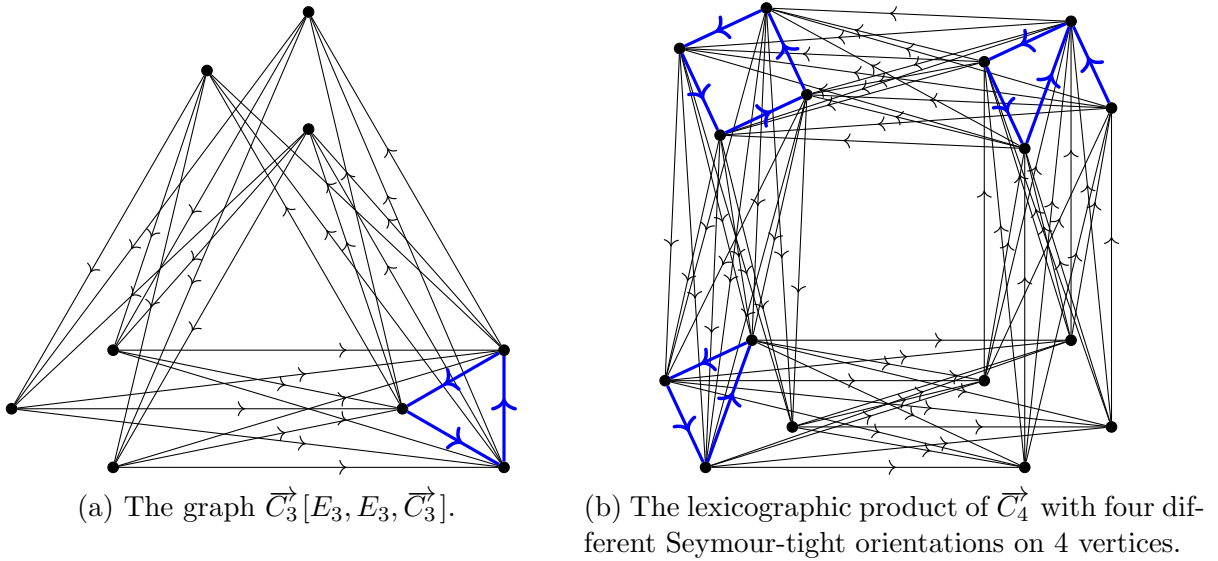
\begin{figure}[ht]
\begin{subfigure}[t]{0.5\textwidth}
    \centering
    \begin{tikzpicture}[scale=1.0]
\tikzstyle{edge} = [
    postaction={
      decorate,
      decoration={
        markings,
        mark=at position 0.6 with {\arrow[scale=1.1]{>}}}}
]
\tikzstyle{vertex}=[circle, draw, fill=black,
                        inner sep=0pt, minimum width=4pt]

\def\Rbig{3}    
\def\Rsmall{0.9}  

\foreach \j in {0,1,2} {
  \node[vertex] (a\j) at ($(90:\Rbig)+({60+120*\j}:\Rsmall)$) {};
}
\foreach \j in {0,1,2} {
  \node[vertex] (b\j) at ($(210:\Rbig)+({60+120*\j}:\Rsmall)$) {};
}
\foreach \j in {0,1,2} {
  \node[vertex] (c\j) at ($(330:\Rbig)+({60+120*\j}:\Rsmall)$) {};
}

\foreach \u in {a0,a1,a2} \foreach \v in {b0,b1,b2} \draw[edge] (\u) -- (\v);
\foreach \u in {a0,a1,a2} \foreach \v in {c0,c1,c2} \draw[edge] (\v) -- (\u);
\foreach \u in {b0,b1,b2} \foreach \v in {c0,c1,c2} \draw[edge] (\u) -- (\v);

\foreach \x/\y in {c0/c1,c2/c0,c1/c2} \draw[edge, color = blue, line width= 1.2 pt] (\x) -- (\y);

\end{tikzpicture}
    \caption{The graph $\dir{C_3}[E_3,E_3,\dir{C_3}]$.}\label{fig:non_reg_example}
\end{subfigure}
\begin{subfigure}[t]{0.5\textwidth}
    \centering
    \begin{tikzpicture}[scale=1.0]
\tikzstyle{edge} = [
    postaction={
      decorate,
      decoration={
        markings,
        mark=at position 0.6 with {\arrow[scale=1.1]{>}}}}
]
\tikzstyle{vertex}=[circle, draw, fill=black,
                        inner sep=0pt, minimum width=4pt]

\def\Rbig{3}    
\def\Rsmall{0.9}  

\foreach \j in {0,1,2,3} {
  \node[vertex] (a\j) at ($(45:\Rbig)+({70+90*\j}:\Rsmall)$) {};
}
\foreach \j in {0,1,2,3} {
  \node[vertex] (b\j) at ($(130:\Rbig)+({70+90*\j}:\Rsmall)$) {};
}
\foreach \j in {0,1,2,3} {
  \node[vertex] (c\j) at ($(-135:\Rbig)+({70+90*\j}:\Rsmall)$) {};
}

\foreach \j in {0,1,2,3} {
  \node[vertex] (d\j) at ($(-45:\Rbig)+({70+90*\j}:\Rsmall)$) {};
}

\foreach \u in {a0,a1,a2,a3} \foreach \v in {b0,b1,b2,b3} \draw[edge] (\u) -- (\v);
\foreach \u in {a0,a1,a2,a3} \foreach \v in {d0,d1,d2,d3} \draw[edge] (\v) -- (\u);
\foreach \u in {b0,b1,b2,b3} \foreach \v in {c0,c1,c2,c3} \draw[edge] (\u) -- (\v);
\foreach \u in {c0,c1,c2,c3} \foreach \v in {d0,d1,d2,d3} \draw[edge] (\u) -- (\v);

\foreach \x/\y in {a0/a1,a2/a0,a1/a2,a3/a0} \draw[edge, color=blue, line width = 1.2 pt] (\x) -- (\y);
\foreach \x/\y in {b0/b1,b3/b0,b1/b2,b2/b3} \draw[edge, color = blue, line width = 1.2 pt] (\x) -- (\y);
\foreach \x/\y in {c0/c1,c2/c0,c1/c2} \draw[edge, color = blue, line width= 1.2 pt] (\x) -- (\y);

\end{tikzpicture}
    \caption{The lexicographic product of $\dir{C_4}$ with four different Seymour-tight orientations on 4 vertices.}\label{fig:non_reg2}
\end{subfigure}
    \caption{Two examples of a strongly connected non-regular Seymour-tight orientations.}\label{fig: not reg example 2x}
\end{figure}

Similarly, we can also extend Theorem \ref{Lex product counterexample} to also hold for generalized lexicographic products. 

\begin{Cor}
    Let $O$ and $O_1,\ldots, O_n$ be counterexamples to Seymour's second neighbourhood conjecture. Let $G$ and $G_1,\ldots, G_n$ be Seymour orientations. Then both of  $G[O_1,\ldots, O_n]$ and $O[G_1,\ldots, G_n]$ are counterexamples  to  Seymour's second neighbourhood conjecture. \qed
\end{Cor}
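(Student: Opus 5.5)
Both claims follow from one direct count of first and second out-neighbourhoods in a generalized lexicographic product with factors of arbitrary (possibly different) orders; no reduction to ordinary products is needed. Let $D$ have vertex set $[n]$ and let $H_1,\ldots,H_n$ be digraphs. Take a vertex $(i,v)$ of $D[H_1,\ldots,H_n]$. By the definition of the generalized lexicographic product,
\[\NGi{1}{D[H_1,\ldots,H_n]}{(i,v)} = \{(j,w) \mid j\in \NGi{1}{D}{i}\} \cup \{(i,w)\mid w\in \NGi{1}{H_i}{v}\}.\]
For the second neighbourhood, I would repeat the argument of Lemma~\ref{Seymour lex}. A two-step path from $(i,v)$ either stays inside the copy of $H_i$, or reaches some block $j$ with $j\in \NGi{1}{D}{i}\cup\NGi{2}{D}{i}$, in which case it reaches every vertex of that block. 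Removing the first neighbourhood and $(i,v)$ itself gives
\[\NGi{2}{D[H_1,\ldots,H_n]}{(i,v)} = \{(j,w) \mid j\in \NGi{2}{D}{i}\} \cup \{(i,w)\mid w\in \NGi{2}{H_i}{v}\}.\]
The key fact here is that $i\notin \NGi{2}{D}{i}$, so the $D$-part never contributes vertices of block $i$. Writing $n_j=|V(H_j)|$, this yields the identity
\[|\NGi{1}{}{(i,v)}|-|\NGi{2}{}{(i,v)}| = \Big(\sum_{j\in \NGi{1}{D}{i}} n_j - \sum_{j\in \NGi{2}{D}{i}} n_j\Big) + \big(|\NGi{1}{H_i}{v}|-|\NGi{2}{H_i}{v}|\big).\]

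For $O[G_1,\ldots,G_n]$, the second bracket is $\ge 0$ because each $G_i$ is a Seymour orientation. So it suffices to show that the first bracket is positive. This is the main obstacle: $O$ only gives $|\NGi{1}{O}{i}|>|\NGi{2}{O}{i}|$ in cardinality, but the block sizes $n_j$ can differ. I would handle it by replacing each $G_j$ with $G_j[E_{m_j}]$ for suitable $m_j$ so that all blocks have equal size $N$. This is not enough as it stands, though, because the statement fixes the $G_j$. So I would instead look for a weight-free argument. I would check whether the intended reading has all $n_j$ equal (as in Corollary~\ref{vervang alle punten}). If it does not, I would note that a weighted counterexample is needed and prove the inequality $\sum_{N^1}n_j>\sum_{N^2}n_j$ via Theorem~\ref{Lex product counterexample} applied after blowing up. If that fails, I would add the equal-order hypothesis explicitly, since the weighted inequality is false in general: a single huge block in $\NGi{2}{O}{i}$ breaks it.

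For $G[O_1,\ldots,O_n]$, the second bracket is $\ge 1$ because each $O_i$ is a counterexample. So the total is positive provided the first bracket is $\ge 0$. Again this holds immediately when all blocks have equal size, since then it equals $N(|\NGi{1}{G}{i}|-|\NGi{2}{G}{i}|)\ge 0$. Combining the two cases, every vertex satisfies $|N^1|>|N^2|$, and both products are orientations because all factors are. This gives the claim.
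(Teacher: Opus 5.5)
Your blockwise count of the first and second out-neighbourhoods in the generalized lexicographic product is correct and is essentially the paper's own proof. The paper's proof also only works when all blocks have the same order: it uses ``since all $O_i$ have the same size'' even though the statement omits this. So committing to the equal-order hypothesis is exactly right, and it is necessary for both claims. Your huge-block example shows this for $O[G_1,\ldots,G_n]$, and $\dir{C_3}[O,O,O[E_m]]$ with $m\ge 2$ shows it for $G[O_1,\ldots,O_n]$. Drop the hedging detour through blowing up the $G_j$ or appealing to Theorem~\ref{Lex product counterexample}, since neither can rescue the unequal-order case.
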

\begin{proof}
Let $G$ be Seymour-tight and let $O_1,\ldots,O_n$ be counterexamples. Let $(i,v)$ be a vertex in $G[O_1,\ldots,O_n]$. Then
    \[\NGi{1}{G[O_1,\ldots,O_n]}{(i,v)} = \left\{(j,w) \, \middle\vert \, w \in \NGi{1}{G}{j} \text{ or } i=j \text{ and } w \in \NGi{1}{O_i}{v}\right\}.\]
    In particular, since all $O_i$ have the same size, $|\NGi{1}{G[O_1,\ldots,O_n]}{(i,v)}| = |V(O_i)| \cdot |\NGi{1}{G}{i}| + |\NGi{1}{O_i}{v}|$.

    All vertices $(j,w)$ that can be reached from $(i,v)$ in at most two steps satisfy either $j \in \NGi{1}{G}{i} \, \cup \, \NGi{2}{G}{i}$ or $i=j$ and $w \in \NGi{1}{O_i}{v} \, \cup \, \NGi{2}{O_i}{v}$. By deleting those in $\NGi{1}{G[O_1,\ldots,O_n]}{(i,j)}$, we obtain
    \[\NGi{2}{G[O_1,\ldots,O_n]}{(i,v)} = \left\{(j,w) \, \middle\vert \, j \in \NGi{2}{G}{v} \text{ or } i=j \text{ and } w \in \NGi{2}{O_i}{v}\right\},\]
    thus implying $|\NGi{2}{G[O_1,\ldots,O_n]}{(i,v)}| = |V(O_i)| \cdot |\NGi{2}{G}{i}| + |\NGi{2}{O_i}{v}|$. Since $G$ is Seymour and every $O_i$ is a counterexample, we have
    \begin{align*} |\NGi{1}{G[O_1,\ldots,O_n]}{(i,v)}| &= |V(O_i)| \cdot |\NGi{1}{G}{i}| + |\NGi{1}{O_i}{v}|\\&> |V(O_i)| \cdot |\NGi{2}{G}{i}| + |\NGi{2}{O_i}{v}|\\&=  |\NGi{2}{G[O_1,\ldots,O_n]}{(i,v)}| \end{align*}
    for all vertices $(i,v)$. Hence, we obtain that $G[O_1,\ldots,O_n]$ is also a counterexample. Similarly, we can prove that also $O[G_1,\ldots, G_n]$ is a counterexample.
\end{proof}

Note that we could have used this type of proof also to prove Corollary \ref{vervang alle punten}. We can use even more generalized lexicographic-type products to make new Seymour-tight orientations.
Following~\cite{bouya2021seymour}, we define the matrix $S_D$  with entries given by 
\[
S_D(v,w) = \begin{cases}
    1, & \text{ if }w \in N_1(v); \\
    -1, & \text{ if } w \in N_2(v); \\
    0, & \text{ otherwise.}
\end{cases}
\]
By construction, $S_D \mathbf{1}=0$ if and only if $D$ is a Seymour-tight orientation. Moreover, an orientation is a Seymour orientation if and only if $S_D \mathbf{1} \leq 0$. The matrix $S_D^{T}$ corresponds to converse, which is the orientation where all arcs are reversed. We will consider specific vectors in the kernel of $S_D$, which is a subspace of $\mathbb{R}^n$. Since $S_D$ is an integral matrix, there exists a basis of the kernel of $S_D$ with integer entries.

\begin{Theorem}
    Let $D$ be an orientation on $[n]$ and let $\mathbf{x} \in \mathbb{Z}_{\geq 0}$ be a vector such that $S_D\mathbf{x} = 0$. For all $i \in [n]$, let $G_i$ be a Seymour-tight orientation of size $\mathbf{x}_i$. Then the graph $D[(G_i)_{i \in V(D)}]$ is a Seymour-tight orientation.
\end{Theorem}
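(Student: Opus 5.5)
Following the proof of the corollary for generalized lexicographic products, we compute the first and second out-neighbourhoods of each vertex of $P := D[(G_i)_{i\in V(D)}]$ directly. Here the blocks may have different sizes $\mathbf{x}_i$, and some may be empty. A vertex $(i,v)$ exists only when $\mathbf{x}_i \geq 1$; blocks with $\mathbf{x}_i = 0$ are simply deleted, and they contribute $0$ to every count below. This is consistent, since $\mathbf{x}$ only enters through weighted sums.

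First, by the definition of the generalized lexicographic product,
\[\NGi{1}{P}{(i,v)} = \{(j,w) \mid j \in \NGi{1}{D}{i}\} \cup \{(i,w) \mid w \in \NGi{1}{G_i}{v}\}.\]
Hence $|\NGi{1}{P}{(i,v)}| = \sum_{j \in N_1(i)} \mathbf{x}_j + |\NGi{1}{G_i}{v}|$.

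Second, we would like to claim
\[\NGi{2}{P}{(i,v)} = \{(j,w) \mid j \in \NGi{2}{D}{i}\} \cup \{(i,w) \mid w \in \NGi{2}{G_i}{v}\},\]
giving $|\NGi{2}{P}{(i,v)}| = \sum_{j \in N_2(i)} \mathbf{x}_j + |\NGi{2}{G_i}{v}|$. Since each $G_i$ is Seymour-tight, subtracting the two counts gives
\[|\NGi{1}{P}{(i,v)}| - |\NGi{2}{P}{(i,v)}| = \sum_j S_D(i,j)\mathbf{x}_j = (S_D\mathbf{x})_i = 0,\]
which proves the theorem.

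The main obstacle is justifying the description of the second neighbourhood when some blocks are empty. A path $i \to k \to j$ in $D$ with $\mathbf{x}_k = 0$ is not realised in $P$, so the "$j \in N_2(i)$" part of the claimed set can fail. The same problem affects the first part: the sets are correct exactly when $N_2$ is computed in the induced subgraph of $D$ on the support of $\mathbf{x}$. The other parts are routine. A path $(i,v)\to(k,u)\to(j,w)$ with $k\neq i$ lands in a block $j$ with $j\in N_1(i)\cup N_2(i)\cup\{i\}$. If $j=i$, the vertex would lie on a 2-cycle $i\to k\to i$, which is impossible in an orientation; this is the same point used implicitly in Lemma~\ref{Seymour lex}. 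So within block $i$, only $G_i$-paths contribute. Also, any vertex $(j,w)$ with $j\in N_2(i)$ is reached in two steps as soon as the middle block is nonempty.

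To handle this, I would state the restricted version. Let $D'$ be $D$ restricted to $\mathrm{supp}(\mathbf{x})$. Then the argument gives the result provided $S_{D'}\mathbf{x}'=0$, and in particular whenever $\mathbf{x}$ is strictly positive. Alternatively, I would interpret $G_i$ of size $0$ as forcing $\mathbf{x}>0$. I would then check whether $S_D\mathbf{x}=0$ with zero entries implies $S_{D'}\mathbf{x}'=0$. This is not true in general, since deleting a middle vertex only shrinks second neighbourhoods. So I would either add positivity as a hypothesis or remark that the zero entries must be read this way.
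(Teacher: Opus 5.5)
Your argument is correct and is the paper's proof written out row by row. The paper writes $S_{D[(G_i)]}$ as a block matrix with diagonal blocks $S_{G_i}$ and off-diagonal blocks $S_D(i,j)\cdot J$, then multiplies by $\mathbf{1}$; this is your identity $|N_1|-|N_2|=(S_D\mathbf{x})_i$.

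Your caveat about zero entries is also right, and it is a genuine gap in the paper's statement. The block description assumes that whenever $j\in N_2(i)$, some middle vertex on an $i\to k\to j$ path has $\mathbf{x}_k>0$.

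Here is a concrete failure of the statement as written. Take $D$ on the vertices $u,a,b,c,w,z$ with arcs $a\to b\to c\to a$, $u\to a$, $u\to b$, $u\to z$, $z\to a$, $z\to w$ and $w\to c$. Set $\mathbf{x}$ equal to $1$ everywhere except $\mathbf{x}_z=0$. Then $S_D\mathbf{x}=0$:
\begin{itemize}
\item Row $u$: $N_1=\{a,b,z\}$ and $N_2=\{c,w\}$, with weights $2=2$.
\item Row $z$: $N_1=\{a,w\}$ and $N_2=\{b,c\}$, with weights $2=2$.
\item The rows of $w,a,b,c$ are balanced by inspection.
\end{itemize}
Now take $G_i=K_1$ on the support and the empty digraph for $z$. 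The product is $D-z$, in which $u$ has $N_1=\{a,b\}$ but $N_2=\{c\}$, so it is not Seymour-tight.

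So the theorem does need $\mathbf{x}$ strictly positive, or your condition $S_{D'}\mathbf{x}'=0$ on the support, exactly as you proposed.
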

\begin{proof}
    We can write $S_{D[(G_i)_{i \in [n]}]}$ as a block matrix:
    \[S_{D[(G_i)_{i \in [n]}]} = \begin{pmatrix}
    S_{G_1} & S_D(1,2) \cdot J & S_D(1,3) \cdot J & \ldots & S_D(1,n) \cdot J\\
    S_D(2,1) \cdot J & S_{G_2} & S_D(2,3) \cdot J & \ldots & S_D(2,n) \cdot J\\
    \vdots & \vdots & \vdots & \ddots & \vdots \\
      S_D(n,1) \cdot J & S_D(n,2) \cdot J & S_D(n,3) \cdot J & \ldots & S_{G_n}
    \end{pmatrix},
    \]
    where $J$ is the all ones matrix of the appropriate size. Then $S_{D[(G_v)_{v \in V(D)}]} \mathbf{1}=0$ since $S_{G_i} \mathbf{1}=0$ for all $i$ and $S_D(i,j) \cdot J \mathbf{1}=S_D(i,j) \cdot x_j$. Since $S_D\mathbf{x} = 0$, we have $\sum_{j \neq i} S_D(i,j) \cdot x_j=0$ for all $i$.
\end{proof}

With this result we can even take generalized lexicographic products of some graphs that are not all the same size. 

\begin{figure}[ht]
    \centering
    \begin{tikzpicture}[scale=1]
\tikzstyle{edge} = [
    postaction={
      decorate,
      decoration={
        markings,
        mark=at position 0.6 with {\arrow[scale=1.2]{>}}}}
]
\tikzstyle{vertex}=[circle, draw, fill=black,
                        inner sep=0pt, minimum width=4pt]

\def\Rbig{3.8}    
\def\Rsmall{0.9}  

\foreach \j in {0,1,2} {
  \node[vertex, fill= blue,draw=blue] (a\j) at ($(90:\Rbig)+({60+120*\j}:\Rsmall)$) {};
}
\foreach \j in {0,1,2} {
  \node[vertex, fill=blue, draw=blue] (b\j) at ($(210:\Rbig)+({60+120*\j}:\Rsmall)$) {};
}
\foreach \j in {0,1,2} {
  \node[vertex, fill=blue, draw=blue] (c\j) at ($(330:\Rbig)+({60+120*\j}:\Rsmall)$) {};
}

\node[vertex] (d) at ($(30:\Rbig)$) {};
\node[vertex] (e) at ($(150:\Rbig)$) {};
\node[vertex] (f) at ($(270:\Rbig)$) {};

\foreach \u in {a0,a1,a2} \foreach \v in {b0,b1,b2} \draw[edge] (\u) -- (\v);
\foreach \u in {a0,a1,a2} \foreach \v in {c0,c1,c2} \draw[edge] (\v) -- (\u);
\foreach \u in {b0,b1,b2} \foreach \v in {c0,c1,c2} \draw[edge] (\u) -- (\v);

\foreach \v in {b0,b1,b2} \draw[edge, color=black] (e) -- (\v);
\foreach \v in {b0,b1,b2} \draw[edge, color = black] (\v) -- (f);
\foreach \v in {c0,c1,c2} \draw[edge, color = black]  (\v) -- (d);
\foreach \v in {c0,c1,c2} \draw[edge, color = black]  (f) -- (\v);
\foreach \u in {a0,a1,a2} \draw[edge, color = black] (d) -- (\u);
\foreach \u in {a0,a1,a2} \draw[edge, color = black]  (\u) -- (e);

\draw[edge, color = black] (d) -- (e);
\draw[edge, color = black] (e) -- (f);
\draw[edge, color = black] (f) -- (d);

\foreach \x/\y in {c0/c1,c2/c0,c1/c2} \draw[edge, color = blue, line width= 1.5 pt] (\x) -- (\y);

\end{tikzpicture}
    \caption{A generalized lexicographic product $D[G_1,\ldots,G_6]$ where $|V(G_1)|=|V(G_3)|=|V(G_5)|=3$ and $|V(G_2)=|V(G_4)|=|V(G_6)|=1$.}
    \label{fig: 6 cycle mod 2}
\end{figure}
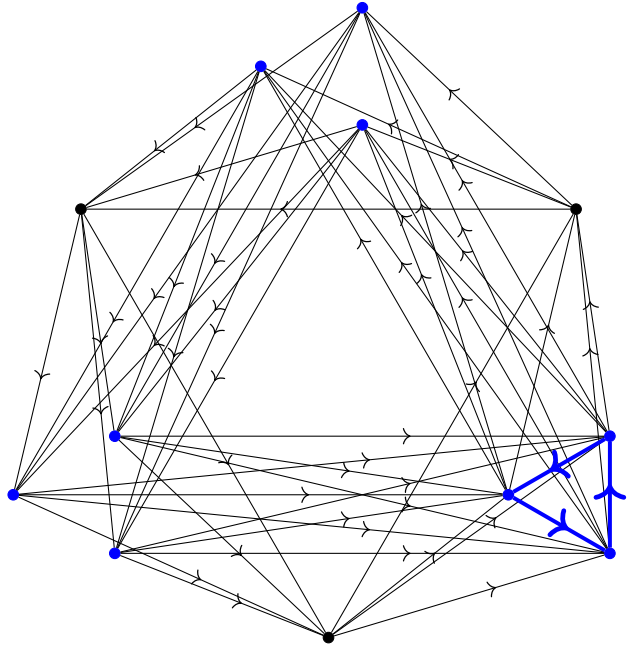

\begin{Lemma} Let $\dir{C_n}^k$ be the $k$-th power of a directed cycle, where $2k < n$. Let $d=\gcd(n,k)$. Then for all $i \in \{1,\ldots, d\}$,  the vector $\chi_i$ where
\[\chi_i(j) = \begin{cases}
    1 & \text{if } j \equiv i \mod d\\
    0 & \text{else}
\end{cases}\] lies in the kernel of $S_D$.
\end{Lemma}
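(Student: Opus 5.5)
We compute $(S_D\chi_i)(v_a)$ directly for an arbitrary vertex $v_a$ of $D=\dir{C_n}^k$, using the explicit neighbourhoods from the proof of Lemma~\ref{lem: kth power dicycle}. That proof shows $\NGi{1}{D}{v_a}=\{v_{a+1},\ldots,v_{a+k}\}$ and $\NGi{2}{D}{v_a}=\{v_{a+k+1},\ldots,v_{a+2k}\}$, with indices modulo $n$. Since $2k<n$, these $2k$ indices are pairwise distinct modulo $n$ and none equals $a$. So row $v_a$ of $S_D$ has entry $+1$ in the $k$ columns $a+1,\ldots,a+k$, entry $-1$ in the $k$ columns $a+k+1,\ldots,a+2k$, and $0$ elsewhere. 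Hence
\[(S_D\chi_i)(v_a)=\#\{t\in\{1,\ldots,k\} : a+t\equiv i \bmod d\}-\#\{t\in\{k+1,\ldots,2k\} : a+t\equiv i \bmod d\}.\]

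First we check that $\chi_i$ is well defined on $\mathbb{Z}_n$. This holds because $d=\gcd(n,k)$ divides $n$, so the residue of an index modulo $d$ does not depend on the representative chosen modulo $n$. In particular, we may compute the residues of $a+t$ modulo $d$ using ordinary integers $a+t$, with no wrap-around issue.

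The key observation is that $d$ divides $k$. Any block of $k$ consecutive integers therefore splits into $k/d$ blocks of $d$ consecutive integers, and each of these contains every residue class modulo $d$ exactly once. So each of the two counts above equals $k/d$, and $(S_D\chi_i)(v_a)=0$ for every $a$. This gives $S_D\chi_i=0$.

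We do not expect a real obstacle here. The only points needing care are the distinctness of the $2k$ indices modulo $n$, which uses $2k<n$, and the compatibility of residues modulo $d$ with indices modulo $n$, which uses $d\mid n$. We will state both explicitly before doing the counting.
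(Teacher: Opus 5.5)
Your proof is correct and follows the same row-by-row counting argument as the paper: since $d\mid k$, each block of $k$ consecutive indices meets every residue class modulo $d$ exactly $k/d$ times. Your version is in fact slightly more careful. You correctly place the $-1$ entries at $\{a+k+1,\ldots,a+2k\}$, whereas the paper writes $\{m-k,\ldots,m-1\}$; that slip does not affect the count. You also note explicitly that $d\mid n$ makes $\chi_i$ compatible with indices taken modulo $n$.
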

\begin{proof}
    Every vertex $m \in V(\dir{C_n}^k)$ has a $+1$ in $S_D$ for all vertices in $\{m+1,\ldots,m+k\}$ and a $-1$ in $S_D$ for all vertices in $\{m-k, \ldots, m-1\}$. Since $d \mid k$ both these sets contain exactly $\frac{k}{d}$ vertices satisfying $j \equiv i \mod d$. Hence, $S_D \chi_i=0$.
\end{proof}

In light of this, every linear combination of vectors $\chi_i$ where $i \in \{1, \ldots, \frac{n}{d}\}$ lies in the kernel of $S_D$. We can use these vectors to construct more examples of Seymour-tight orientations. For example, take $D=\dir{C_6}^2$. Then $d=\gcd(6,2)=2$. Thus the vector $\chi_1+3\chi_2=(1,3,1,3,1,3)^T$ lies in the kernel of $S_D$. Hence, we can take $G_1=G_3=G_5=K_1$ to be Seymour-tight orientations on one vertex and $G_2=G_4=E_3$ and $G_6=\dir{C_3}$ to be Seymour-tight orientations on three vertices. This construction results in the graph in Figure~\ref{fig: 6 cycle mod 2}.

Lemma \ref{lem: replacement} can also be applied to other Seymour-tight orientations than lexicographic products. For example, a regular tournament can contain one or multiple small regular tournaments on which all other vertices are uniform. Then we can replace these small tournaments with some other Seymour-tight orientations to obtain a new Seymour-tight orientation that is not necessarily a regular tournament, see Figure~\ref{fig:replacement}.

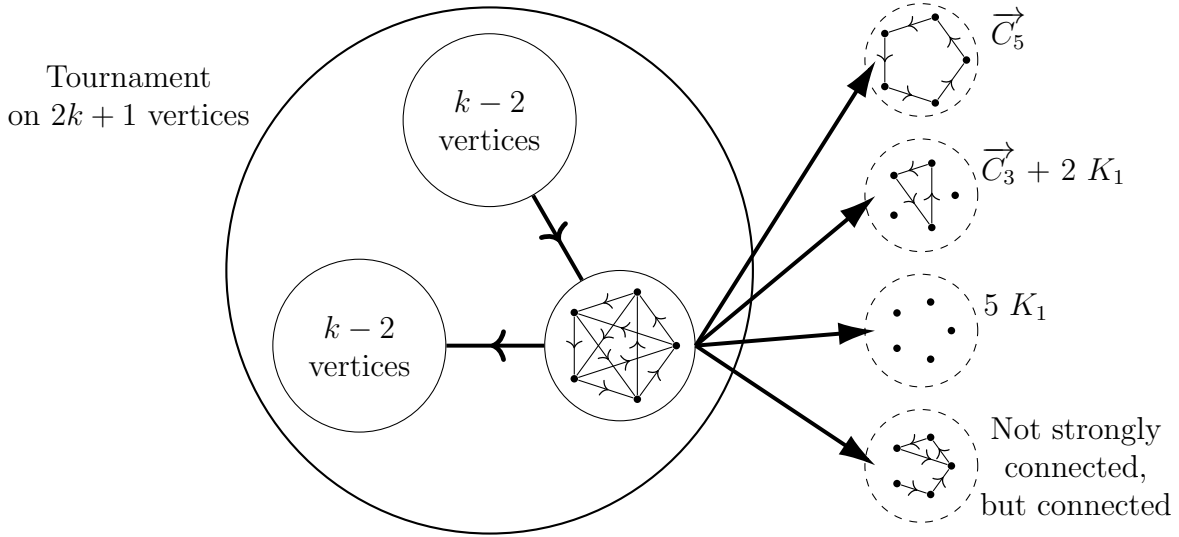
\begin{figure}[ht]
    \centering
    \begin{tikzpicture}[
    bigcycle/.style={circle, draw=black, thick, minimum size=7cm},
    smallcycle/.style={circle, draw=black, minimum size=2.3cm},
    smallercycle/.style={circle, draw=black, minimum size=2cm},
    outsidecycle/.style={circle, draw=black, dashed, minimum size=1.5cm},
    arrow/.style={-{Latex[length=5mm,width=3mm]}, thick, line width=1.5pt},
    isolated/.style={circle, draw=black, fill=black, radius=0.1cm},
    node distance=0.5cm
]

\tikzstyle{edge} = [
    postaction={
      decorate,
      decoration={
        markings,
        mark=at position 0.55 with {\arrow[scale=1.2]{>}}}}
]
\tikzstyle{edge1} = [
    postaction={
      decorate,
      decoration={
        markings,
        mark=at position 0.65 with {\arrow[scale=1.2]{>}}}}
]

\node[bigcycle] (big) {};
\node[above left, align=center] at (big.150) {Tournament\\ on $2k+1$ vertices};

\node[smallcycle, align=center] (inner1) at (210:2) {$k-2$\\ vertices};
\node[smallcycle, align=center] (inner2) at (90:2) {$k-2$\\ vertices};
\node[smallercycle, align=center] (inner3) at (330:2) {};

\node (k5) at (inner3.center) {};
\begin{scope}[shift={(inner3.center)}, scale=0.5]
    \foreach \angle in {0,72,144,216,288} {
        \node[circle, fill=black, inner sep=1pt] (v\angle) at (\angle:1.5) {};
    }
    \ \foreach \i [evaluate=\i as \angle using int(\i*72)] in {0,...,4} {
        \foreach \offset in {1,2} {
            \pgfmathtruncatemacro{\nextangle}{mod(\angle + \offset*72, 360)}
            \draw[edge] (v\angle) -- (v\nextangle);
        }
    }
\end{scope}

\draw[edge, line width = 1.5 pt] ([xshift=-1cm]inner3) -- (inner1);
\draw[edge, line width = 1.5 pt] (inner2)--([xshift=-1cm]inner3);

\coordinate (arrowstart) at ([xshift=1cm]inner3.east);
\coordinate (arrowend) at ([xshift=3cm,yshift=1cm]inner3.east);

\draw[arrow] (inner3.east) -- ([yshift=-0.8cm, xshift=-0.6cm]arrowend);
\draw[arrow] (inner3.east) -- ([yshift=2.8cm,xshift=-0.6cm]arrowend);
\draw[arrow] (inner3.east) -- ([yshift=-2.6cm,xshift=-0.6cm]arrowend);
\draw[arrow] (inner3.east) -- ([yshift=1.0cm,xshift=-0.6cm]arrowend);

\node[outsidecycle] (outside1) at ([yshift=2.8 cm]arrowend) {};
\node[outsidecycle] (outside2) at ([yshift=1.0 cm]arrowend){};
\node[outsidecycle] (outside3) at ([yshift=-0.8cm]arrowend) {};
\node[outsidecycle] (outside4) at ([yshift=-2.6cm]arrowend) {};

\node[above] at ([xshift=0.4cm]outside1.east) {$\overrightarrow{C_5}$};
\node[above] at ([xshift=1cm]outside2.east) {$\overrightarrow{C_3}$ + 2 $K_1$};
\node[above] at ([xshift=0.5cm]outside3.east) {5 $K_1$};
\node[align=center] at ([xshift=1.3 cm]outside4.east) {Not strongly\\ connected,\\ but connected};

\begin{scope}[shift={(outside1.center)}, scale=0.3]
    \foreach \angle in {0,72,144,216,288} {
        \node[circle, fill=black, inner sep=1pt] (c5\angle) at (\angle:2) {};
    }
    \foreach \angle [count=\i] in {0,72,144,216,288} {
        \pgfmathtruncatemacro{\nextangle}{mod(\angle+72,360)}
        \draw[edge] (c5\angle) -- (c5\nextangle);
    }
\end{scope}

\begin{scope}[shift={(outside2.center)}, scale=0.3]
    \foreach \angle in {0, 72,144,216,288} {
        \node[circle, fill=black, inner sep=1pt] (c3\angle) at (\angle:1.5) {};
    }
    \draw[edge] (c3288) -- (c372);
    \draw[edge1] (c372) -- (c3144);
    \draw[edge] (c3144) -- (c3288);
\end{scope}

\begin{scope}[shift={(outside3.center)}, scale=0.2]
    \foreach \angle in {0,72,144,216,288} {
        \node[circle, fill=black, inner sep=1pt] (iso\angle) at (\angle:2) {};
    }
\end{scope}

\begin{scope}[shift={(outside4.center)}, scale=0.2]
    \foreach \angle in {0,72,144,216,288} {
        \node[circle, fill=black, inner sep=1pt] (d\angle) at (\angle:2) {};
        }
    \draw[edge1] (d0) -- (d72);
    \draw[edge1] (d72) -- (d144);
    \draw[edge1] (d144) -- (d0);
    \draw[edge1] (d216) -- (d288);
    \draw[edge1] (d288) -- (d0);

\end{scope}
\end{tikzpicture}
    \caption{If a large regular tournament  on $2k+1$ vertices contains a small regular tournament such that all other vertices are uniform on that tournament, then we can replace the small regular tournament with some other Seymour-tight orientation.}
    \label{fig:replacement}
\end{figure}

\begin{Example}\label{example}
    Let $k$ and $l$ be integers such that $3l+1 \leq k$. Let $T$ be a regular tournament on $2k+1$ vertices such that there exists a set $X$ of size $2l+1$ that all vertices in $X$ have the same in- and out- neighbourhood. Since $3l+1 \leq k$, this is possible as there are $2k+1-(2l+1) \geq 4l+2$ vertices outside $X$ in $T$. 
    
    Since $X$ is an induced subgraph of a tournament, $X$ itself is also a tournament. Moreover, there exists $k$ such that 
    $k = |\NGi{1}{T|_X}{v}|-|\NGi{2}{T|_X}{v}|$ for all $v \in X$.  Fischer's theorem~\cite{fisher1996squaring} implies $k \geq 0$. Moreover, there is at least one $w \in X$ that has out-degree at least $\frac{X-1}{2}$ implying  $k \leq 0$. Hence, $k=0$. Thus replacing $X$ with any Seymour-tight orientation on $|X|$ vertices in $G$ results in a new Seymour-tight orientation.
\end{Example}


\subsection{Seymour-tight orientations with out-degree at most 2}
Throughout the paper, we will see that lexicographic products are a powerful tool; in this section, we see that it is the only tool we need to characterize strongly connected Seymour-tight orientations with out-degree at most 2.
Note that a graph $D$ containing a vertex $v$ of out-degree zero is strongly connected only if $D=\{v\}$. Hence, we may assume that every vertex in $D$ has at least one out-neighbour. We will give a characterization based on whether there exists a vertex of out-degree exactly $1$ or $2$. 

Directed cycles are strongly connected Seymour-tight orientations in which every vertex has out-degree one. We will now show these are the only strongly connected Seymour-tight orientation that have a vertex of out-degree one. 

\begin{Lemma}\label{lem:out-degree1}
 A strongly connected orientation $D$ is a Seymour-tight orientation with a vertex of out-degree 1 if and only if $D$ is a directed cycle.
\end{Lemma}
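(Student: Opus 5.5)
The easy direction is immediate: a directed cycle $\dir{C_n}$ with $n\ge 3$ is strongly connected and Seymour-tight, as observed at the start of Section~\ref{Sec: examples}, and every vertex has out-degree $1$. For the converse, let $D$ be a strongly connected Seymour-tight orientation with a vertex $v_0$ of out-degree $1$. The idea is to propagate ``out-degree $1$'' forward along out-arcs, so that strong connectivity then forces every vertex to have out-degree $1$.

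\textbf{Propagation step.} Let $v$ have out-degree $1$ with unique out-neighbour $w$. Then $\NGi{2}{D}{v}=\NGi{1}{D}{w}\setminus\{v,w\}$. Since $D$ is an orientation and $v\to w$ is an arc, $v\notin \NGi{1}{D}{w}$, and of course $w\notin\NGi{1}{D}{w}$. Hence $\NGi{2}{D}{v}=\NGi{1}{D}{w}$. Seymour-tightness at $v$ gives $|\NGi{1}{D}{w}|=|\NGi{2}{D}{v}|=|\NGi{1}{D}{v}|=1$. So $w$ also has out-degree $1$.

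\textbf{Closing up.} Starting from $v_0$ and iterating, we obtain a walk $v_0\to v_1\to v_2\to\cdots$ in which every $v_i$ has out-degree exactly $1$. The set $R$ of vertices on this walk is exactly the set of vertices reachable from $v_0$: each $v_i$ has no out-arcs other than the one to $v_{i+1}$, so no path from $v_0$ can leave $R$. By strong connectivity $R=V(D)$, so every vertex has out-degree $1$.

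\textbf{Identifying the graph.} A strongly connected digraph with all out-degrees $1$ is a directed cycle. The walk from $v_0$ is eventually periodic. Since every vertex, $v_0$ included, lies on it and $v_0$ must be reachable from later vertices, the walk returns to $v_0$. Thus $D$ is the cycle $v_0\to v_1\to\cdots\to v_{m-1}\to v_0$, and $m\ge 3$ because $D$ has no $2$-cycles. A strongly connected graph of order one or two with a vertex of out-degree $1$ is impossible for an orientation, so it is not an issue here.

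\textbf{Main subtlety.} The only delicate point is the identity $\NGi{2}{D}{v}=\NGi{1}{D}{w}$: one must exclude $v$ from $\NGi{1}{D}{w}$, and this uses exactly that $D$ has no $2$-cycles. The rest is bookkeeping.
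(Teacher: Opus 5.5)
Your proposal is correct and follows essentially the same route as the paper. The paper's proof propagates out-degree $1$ along the unique out-arc via $|\NGi{1}{D}{w}|=|\NGi{2}{D}{v}|=|\NGi{1}{D}{v}|=1$ and then uses strong connectivity to conclude that $D$ is a directed cycle. You additionally spell out two steps the paper leaves implicit: that $v\notin\NGi{1}{D}{w}$ because $D$ has no $2$-cycles, and the final identification of a strongly connected digraph with all out-degrees $1$ as a cycle.
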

\begin{proof} First, note that every directed cycle is a strongly connected Seymour-tight orientation.

Let $D$ be a strongly connected Seymour-tight orientation and suppose that $v$ has one out-neighbour $w$. Then \[|\NGi{1}{D}{w}|=|\NGi{2}{D}{v}|=|\NGi{1}{D}{v}|=1,\] and also $w$ has exactly one out-neighbour. Repeating this argument and using the fact that $D$ is strongly connected, we conclude that every vertex in $D$ has exactly one out-neighbour. Therefore, $D$ must be a directed cycle. 
\end{proof}

Next, we characterize all strongly connected Seymour-tight orientations with a vertex of out-degree 2.

\begin{Lemma}\label{lem:out-degree2}
    An strongly connected orientation $D$ is a Seymour-tight orientation with a vertex of out-degree 2 if and only if $D$ is isomorphic to one of  $\dir{C_n}^2$ or $\dir{C_n}[E_2]$ for some $n$.
\end{Lemma}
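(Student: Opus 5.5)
The plan is to prove the easy direction first and then, for the converse, propagate local structure from a vertex of out-degree $2$ along arcs, using strong connectivity as in the proof of Lemma~\ref{lem:out-degree1}.

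\emph{Easy direction.} For $n\ge 5$ we have $2\cdot 2<n$, so $\dir{C_n}^2$ is Seymour-tight by Lemma~\ref{lem: kth power dicycle}; it is strongly connected and $2$-out-regular. Next, $\dir{C_n}$ and $E_2$ are Seymour-tight and $\dir{C_n}$ is strongly connected. Hence $\dir{C_n}[E_2]$ is a strongly connected Seymour-tight orientation by Lemma~\ref{Seymour lex}, and every vertex has out-degree $2$.

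\emph{Converse.} Let $D$ be strongly connected and Seymour-tight, and let $v$ have out-neighbours $a,b$. If some vertex had out-degree $1$, Lemma~\ref{lem:out-degree1} would make $D$ a directed cycle, contradicting $\deg^+(v)=2$; since $D$ is strongly connected and not a single vertex, every vertex has out-degree at least $2$. Note also that $a,b$ have no arc to $v$, since $D$ is an orientation. I will split into two cases according to whether $a,b$ are adjacent.

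\emph{Case $a,b$ non-adjacent.} Then $N_1(a)\cup N_1(b)\subseteq N_2(v)$, and $|N_2(v)|=2$. Both $a$ and $b$ have out-degree at least $2$, so $N_1(a)=N_1(b)=N_2(v)$ and $a,b$ have out-degree exactly $2$. The two vertices of $N_2(v)$ are either again a non-adjacent pair or an adjacent one.

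\emph{Case $a\to b$.} As above, $N_1(b)\subseteq N_2(v)$, so $N_1(b)=N_2(v)=\{c,d\}$ and $\deg^+(b)=2$. Moreover $N_1(a)\subseteq\{b,c,d\}$.

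The step I expect to be the main obstacle is ruling out $\deg^+(a)=3$, i.e.\ $N_1(a)=\{b,c,d\}$. My first attempt is to count $N_2(a)$, which must have size $3$. It is contained in $(N_1(c)\cup N_1(d))\setminus\{a,b,c,d\}$. I will then compare this with $N_2(b)$, which has size $2$ and equals $(N_1(c)\cup N_1(d))$ minus $N_1(b)\cup\{b\}$; whether $a$ lies in $N_1(c)\cup N_1(d)$ matters here. If $c\to d$ (or $d\to c$), then $|N_2(b)|=2$ forces $N_1(c)$ to have a very restricted shape. I hope to derive $|N_2(a)|\le 2$, a contradiction. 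If this fails directly, I would apply the same argument with $b$ in place of $v$, since $b$ has out-degree $2$, and iterate.

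Once it is known that every out-neighbour of a degree-$2$ vertex has degree $2$, strong connectivity gives that $D$ is $2$-out-regular. The local analysis above shows that the pair $N_1(u)$ is either adjacent for every $u$, or non-adjacent for every $u$. Here I will use that the case type is inherited by the pair $N_2(v)$, and that strong connectivity reaches every vertex.

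\emph{Finishing, non-adjacent type.} Vertices with equal out-neighbourhoods come in twin classes of size $2$. Collapsing each class gives a $1$-out-regular strongly connected digraph, i.e.\ a directed cycle, so $D\cong\dir{C_n}[E_2]$.

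\emph{Finishing, adjacent type.} Define the successor of $u$ to be the out-neighbour of $u$ that has an arc to the other out-neighbour. The relations $N_1(v)=\{a,b\}$ and $N_1(b)=\{c,d\}$, applied repeatedly, show that following successors traces a directed cycle $v_0v_1\cdots v_{n-1}$ with $N_1(v_i)=\{v_{i+1},v_{i+2}\}$. Hence $D\cong\dir{C_n}^2$.
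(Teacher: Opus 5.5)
Your plan follows the paper's proof closely: split according to whether the two out-neighbours of a degree-$2$ vertex are adjacent, propagate along arcs using strong connectivity, then recognise $\dir{C_n}^2$ or $\dir{C_n}[E_2]$. However, the step you single out as the main obstacle is never carried out. You only ``hope to derive'' $|N_2(a)|\le 2$ when $N_1(a)=\{b,c,d\}$, and the fallback of re-running the argument at $b$ and iterating is not developed into an argument. Without this step you do not know that $a$ has out-degree $2$, so neither $2$-out-regularity nor the successor structure in the adjacent case is established.

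The missing observation is already in the two expressions you wrote down. Because $N_1(b)=\{c,d\}$,
\[
N_2(a)=\bigl(N_1(c)\cup N_1(d)\bigr)\setminus\{a,b,c,d\}\subseteq\bigl(N_1(c)\cup N_1(d)\bigr)\setminus\{b,c,d\}=N_2(b),
\]
so $|N_2(a)|\le|N_2(b)|=|N_1(b)|=2<3=|N_1(a)|$, contradicting tightness at $a$. Whether $a\in N_1(c)\cup N_1(d)$ is irrelevant, since removing $a$ can only shrink the set. No case split on an arc between $c$ and $d$ is needed either. This containment $N_2(v_1)\subseteq N_2(v_2)$ is exactly how the paper excludes out-degree $3$.

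Two smaller points. First, ``the case type is inherited by the pair $N_2(v)$'' is only justified for the adjacent type; your own analysis of the non-adjacent case leaves the type of $N_2(v)$ open. What you can show locally is that adjacency passes to both out-neighbours. Indeed, if $N_1(a)=\{b,c\}$ with $b\to c$, tightness at $a$ gives $N_1(c)=N_2(a)$, and $d\in N_1(b)\setminus\{c\}\subseteq N_2(a)$, so $c\to d$. Strong connectivity then shows that a single adjacent-type vertex forces $D\cong\dir{C_n}^2$. Hence in the remaining case no vertex has an adjacent out-pair, which is how the paper phrases the dichotomy.

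Second, your finishing claims need one more line each: that twin classes have size exactly $2$, and that the successor sequence closes into a single cycle through all vertices. In both cases the missing observation is that otherwise some vertex would have no in-neighbour, contradicting strong connectivity. This is at the level of detail the paper itself leaves implicit.
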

\begin{proof}
    Let $D$ be a strongly connected Seymour-tight orientation and suppose that $v_0$ has two out-neighbours $v_1$ and $v_2$. Lemma~\ref{lem:out-degree1} implies that every vertex has at least two out-neighbours, since the directed cycle has no vertex of out-degree $2$.
    
    First, consider the case where there is an arc from $v_1$ to $v_2$. Then
    \[|\NGi{1}{D}{v_2}| \leq |\NGi{2}{D}{v_0}|=|\NGi{1}{D}{v_0}|=2,\]
    and thus $v_2$ has at most two out-neighbours, say $\NGi{2}{D}{v_0} = \{v_3,v_4\} $. Then $\NGi{1}{D}{v_1}\subseteq \{v_2,v_3,v_4\}$. If $\NGi{1}{D}{v_1}= \{v_2,v_3,v_4\}$, then \begin{align*}
    \NGi{2}{D}{v_1} &= (\NGi{1}{D}{v_2} \cup \NGi{1}{D}{v_3} \cup \NGi{1}{D}{v_4})-\NGi{1}{D}{v_1}\\ &\subseteq  \NGi{1}{D}{v_3} \cup \NGi{1}{D}{v_4} = \NGi{2}{D}{v_2}.\end{align*} 
    Thus, $|\NGi{2}{D}{v_1}| \leq |\NGi{2}{D}{v_2}| =2$, a contradiction. Since the out-degree of any vertex is at least $2$, we can assume without loss of generality that $\NGi{1}{D}{v_1}=\{v_2,v_3\}$. Since there is an arc from $v_2$ to $v_3$, we may  repeat the argument above and conclude that we can number the vertices of $D$ from $v_0,\ldots, v_n$ such that there is an arc from every $v_i$ to $v_{i+1}$ and $v_{i+2}$. Since we can also start our procedure with $v_1$ instead of $v_0$, we remark that $v_0$ should be equal to $v_{n+1}$. This is exactly the definition of $\dir{C_n}^2$.

    Suppose now that there is no arc from $v_1$ to $v_2$. By the argument above, $D$ has no vertex $v$ of out-degree $2$ for which there is an arc between the two neighbours of $v$. Note that
    \[|\NGi{1}{D}{v_1} \cup \NGi{1}{D}{v_2}| = |\NGi{2}{D}{v_0}|=|\NGi{1}{D}{v_0}|=2.\]
    As both $v_1$ and $v_2$ have at least two out-neighbours, we see that \[\NGi{1}{D}{v_1}=\NGi{1}{D}{v_2}=\{v_3,v_4\},\] where there is no arc between $v_3$ and $v_4$. Hence, we can repeat the argument, but now starting with $v_1/v_2$ instead of $v_0$. Thus, by induction, we can pair the vertices of $D$ into pairs of the form $v_{2k-1},v_{2k}$ such that there are arcs from each of $v_{2k-1},v_{2k}$ to both $v_{2k+1}$ and $v_{2k+2}$ for every $k$. This is precisely the definition of $\dir{C_n}[E_2]$.

    For the converse direction, $\dir{C_n}^2$ is a Seymour-tight orientation by Lemma~\ref{lem: kth power dicycle} and $\dir{C_n}[E_2]$ is a Seymour-tight orientations by Lemma~\ref{Seymour lex}.
\end{proof}

If a strongly connected Seymour-tight orientation $D$  has a vertex of out-degree $1$ or $2$, then it follows from Lemmas~\ref{lem:out-degree1} and~\ref{lem:out-degree2} that $D$ is out-regular. We note that this property does not extend to higher out-degree;
as seen in Figure~\ref{fig:non_reg_example}, there are Seymour-tight orientations that have a vertex of out-degree 3 that also contain vertices with higher out-degrees.

\section{Strongly disconnected Seymour-tight orientations}\label{Sec: Strongly disconnected}

As we have seen in Section~\ref{Sec: examples}, every strongly disconnected Seymour-tight orientation can be formed by adding source components to a Seymour-tight orientation. In this section, we give two such constructions. 

\begin{Lemma}\label{Source copy neighbourhood}
    Let $D$ and $G$ be two Seymour-tight orientations. Suppose $|\NGi{1}{G}{X}| = |X|$ for $X \subseteq V(G)$. Let $H$ be the orientation $E(D) \cup E(G)$ together with all arcs from $u \rightarrow x$ where $u \in D$ and $x \in X$. Then $H$ is a Seymour-tight orientation.
    
    In particular, we can take $X=\NGi{1}{G}{x}$ for any $x \in V(G)$.
\end{Lemma}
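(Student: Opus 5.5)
The plan is a direct count, vertex by vertex, of first and second out-neighbourhoods in $H$. I will read $\NGi{1}{G}{X}$ as the set of vertices outside $X$ that receive an arc from some vertex of $X$, i.e.\ $\bigcup_{x\in X}\NGi{1}{G}{x}\setminus X$. This is the reading under which the ``in particular'' clause is meaningful.

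First, $H$ is an orientation: $D$ and $G$ are orientations on disjoint vertex sets, and every new arc goes from $V(D)$ to $V(G)$, so no $2$-cycle is created. Since there are no arcs from $V(G)$ to $V(D)$, every walk starting in $G$ stays in $G$. Hence for $g\in V(G)$ we have $\NGi{1}{H}{g}=\NGi{1}{G}{g}$ and $\NGi{2}{H}{g}=\NGi{2}{G}{g}$, and tightness at $g$ is inherited from $G$.

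The main step concerns a vertex $u\in V(D)$. We have $\NGi{1}{H}{u}=\NGi{1}{D}{u}\cup X$, a disjoint union. The vertices reachable from $u$ in exactly two steps are the out-neighbours in $D$ of vertices of $\NGi{1}{D}{u}$, together with $X$ again (via any $D$-out-neighbour), together with the out-neighbours in $G$ of vertices of $X$. Removing $u$ and $\NGi{1}{H}{u}$, and using that the $D$-part and the $G$-part are disjoint, gives
\[
\NGi{2}{H}{u}=\NGi{2}{D}{u}\,\cup\,\Bigl(\bigcup_{x\in X}\NGi{1}{G}{x}\setminus X\Bigr)=\NGi{2}{D}{u}\cup \NGi{1}{G}{X}.
\]
Note that $u$ itself cannot appear in the $G$-part, because no arc enters $D$ from $G$. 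Therefore
\[
|\NGi{1}{H}{u}|-|\NGi{2}{H}{u}| = \bigl(|\NGi{1}{D}{u}|-|\NGi{2}{D}{u}|\bigr)+\bigl(|X|-|\NGi{1}{G}{X}|\bigr)=0,
\]
by tightness of $D$ and the hypothesis $|\NGi{1}{G}{X}|=|X|$.

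For the special case $X=\NGi{1}{G}{x}$, the out-neighbours of $X$ lying outside $X$ are exactly the vertices reachable from $x$ in two steps but not one. The vertex $x$ is not among them, since an arc $y\to x$ with $y\in\NGi{1}{G}{x}$ would form a $2$-cycle. So this set is exactly $\NGi{2}{G}{x}$, and its size equals $|X|$ because $G$ is Seymour-tight.

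The only delicate point is the correct interpretation of $\NGi{1}{G}{X}$, namely that out-neighbours lying inside $X$ must be excluded, since they already belong to $\NGi{1}{H}{u}$. With that interpretation fixed, the remainder is bookkeeping.
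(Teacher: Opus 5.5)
Your proof is correct and follows the paper's argument. Vertices of $G$ inherit tightness because no arc leaves $G$. For $u\in V(D)$ you split $\NGi{2}{H}{u}$ into $\NGi{2}{D}{u}$ and the out-neighbours of $X$ lying outside $X$, and the special case reduces to $\NGi{1}{G}{\NGi{1}{G}{x}}=\NGi{2}{G}{x}$. Your explicit reading of $\NGi{1}{G}{X}$ as excluding $X$ (together with your check that $x$ itself is not reached) is exactly what the paper's count $|\NGi{2}{H}{v}|=|\NGi{2}{D}{v}|+|N(X)|$ tacitly requires. Under the naive union reading the lemma fails already for $D=K_1$, $G=\dir{C_3}$ and $X$ two consecutive vertices of the cycle.
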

\begin{proof}
     By the definition of $H$, the first and second neighbourhoods of all vertices in $V(G)$ are the same as in $G$, thus their first and second neighbourhood in $H$ have the same size. Let $v \in V(D)$ be arbitrary. Then \[|\NGi{1}{H}{v}| = |\NGi{1}{D}{v}|+|X|.\]
    A vertex that can be reached within two steps from $v$ in $D$, is either a neighbour of a vertex in $X$ or a neighbour of a vertex in $V(D)$. The neighbours of $X$ are the vertices in $\NGi{1}{G}{X}$ and $\NGi{1}{H}{w} \subseteq V(D) \cup X$ for all $w \in V(D)$. Hence,
    \[|\NGi{2}{H}{v}| = |\NGi{2}{D}{v}|+|N(X)| = |\NGi{1}{D}{v}|+|X|=|\NGi{1}{H}{v}|.\]
    for all $v \in V(D)$, and thus $H$ is a Seymour-tight orientation. 

    If  $X = \NGi{1}{G}{x}$ for any $x \in V(G)$, then \[|\NGi{1}{G}{X}|=|\NGi{1}{G}{\NGi{1}{G}{x}}| = |\NGi{2}{G}{x}| = |\NGi{1}{G}{x}| = |X|. \qedhere\]  
\end{proof}

\begin{Lemma}\label{Graph hom construction}
    Let $D$ and $G$ be two Seymour-tight orientations. Let $f: D \rightarrow G$ be a digraph homomorphism. 
    \begin{enumerate}
        \item  Let $H$ be the orientation on $V(D)+V(G)$ with arcs $A(D)$, $A(G)$ and all arcs $d \rightarrow g$ with $d \in D$, $g \in V(G)$ such that $g \in \NGi{1}{G}{f(d)}$. Then $H$ is a Seymour-tight orientation.
        \item Suppose that $f$ is a bijective graph homomorphism, thus $|V(D)|=|V(G)|$. Let $O$ be the orientation on $V(D)+V(G)$ with arcs $A(D)$, $A(G)$ and all arcs $g \rightarrow d$ with $g \in V(G), d \in V(D)$ satisfying $f(d) \in \NGi{1}{G}{g}$. Then $O$ is a Seymour-tight orientation.
    \end{enumerate}
\end{Lemma}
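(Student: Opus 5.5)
The plan is to verify the Seymour-tight condition directly, vertex by vertex, in both constructions, in the same spirit as the proof of Lemma~\ref{Source copy neighbourhood}. In each construction the new arcs between $V(D)$ and $V(G)$ all point in one direction, so $H$ and $O$ are orientations (no $2$-cycles arise). Moreover, one of the two parts is a sink part: $G$ in $H$, and $D$ in $O$. The vertices of the sink part keep exactly their original first and second out-neighbourhoods, so they satisfy $|N_1|=|N_2|$ because $G$ (resp.\ $D$) is Seymour-tight. It remains to handle the vertices of the source part.

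For part~1, fix $d\in V(D)$. By definition,
\[\NGi{1}{H}{d}=\NGi{1}{D}{d}\cup \NGi{1}{G}{f(d)},\]
a disjoint union. Next I list the vertices reachable from $d$ in exactly two steps. Going through $d'\in \NGi{1}{D}{d}$ reaches $\NGi{1}{D}{d'}\cup\NGi{1}{G}{f(d')}$. Going through $g\in\NGi{1}{G}{f(d)}$ reaches $\NGi{1}{G}{g}$.

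The key point is the homomorphism property. From $d\to d'$ we get $f(d)\to f(d')$, so $f(d')\in \NGi{1}{G}{f(d)}$ and hence $\NGi{1}{G}{f(d')}$ is already covered by the second route. Therefore the $G$-vertices reachable within two steps are exactly $\NGi{1}{G}{f(d)}\cup\NGi{1}{G}{\NGi{1}{G}{f(d)}}$. This set does not contain $f(d)$, since $G$ has no $2$-cycles. Removing the first neighbourhood gives
\[\NGi{2}{H}{d}=\NGi{2}{D}{d}\cup\NGi{2}{G}{f(d)}.\]
Since both $D$ and $G$ are Seymour-tight, $|\NGi{1}{H}{d}|=|\NGi{2}{H}{d}|$.

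For part~2, fix $g\in V(G)$. Then
\[\NGi{1}{O}{g}=\NGi{1}{G}{g}\cup f^{-1}\bigl(\NGi{1}{G}{g}\bigr).\]
Two-step walks from $g$ go through some $g'\in\NGi{1}{G}{g}$, reaching $\NGi{1}{G}{g'}\cup f^{-1}(\NGi{1}{G}{g'})$, or through some $d$ with $f(d)\in\NGi{1}{G}{g}$, reaching $\NGi{1}{D}{d}$. By the homomorphism property, $f(\NGi{1}{D}{d})\subseteq \NGi{1}{G}{f(d)}$, and $f(d)$ is itself an out-neighbour of $g$. So the $D$-vertices reachable within two steps are exactly the $f^{-1}$-preimage of the $G$-vertices reachable within two steps from $g$. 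Removing first neighbours gives
\[\NGi{2}{O}{g}=\NGi{2}{G}{g}\cup f^{-1}\bigl(\NGi{2}{G}{g}\bigr).\]
Since $f$ is a bijection, $|f^{-1}(S)|=|S|$ for every $S\subseteq V(G)$. Both neighbourhoods therefore have size twice the corresponding $G$-neighbourhood, and these are equal because $G$ is Seymour-tight.

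The main obstacle is the bookkeeping in these two containment arguments. One has to check that every vertex reached via the "other" part lands inside the neighbourhood already generated in $G$. One must also check that the vertex itself, or its image, is never reached, which uses that $G$ is an orientation. Finally, in part~2 the bijectivity of $f$ is exactly what converts equality of the $G$-side sets into equality of cardinalities on the $D$-side.
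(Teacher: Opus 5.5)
Your proof is correct and takes essentially the same route as the paper. The sink part ($G$ in $H$, $D$ in $O$) keeps its neighbourhoods. For the remaining vertices you establish $\NGi{2}{H}{d}=\NGi{2}{D}{d}\cup\NGi{2}{G}{f(d)}$ and $\NGi{2}{O}{g}=\NGi{2}{G}{g}\cup f^{-1}(\NGi{2}{G}{g})$, using the homomorphism property and the absence of $2$-cycles in $G$, with bijectivity of $f$ entering only in the final count.

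The one difference is presentational. The paper extends $f$ by the identity on $V(G)$ to a homomorphism $H\to G$ (resp.\ $O\to G$) and so handles every two-step walk at once. You instead split explicitly on the middle vertex of the walk, which also makes both inclusions explicit where the paper's part 2 leaves one implicit.
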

\begin{proof}
We first prove that $H$ is a Seymour-tight orientation.
    Note that there are no arcs from $G$ to $ D$ in $H$. Since $G$ is a Seymour-tight orientation, $|\NGi{1}{H}{g}| = \NGi{2}{H}{g}$ for all $g \in V(G)$. For any $d \in V(D)$, we have
\[|\NGi{1}{H}{d}|=|\NGi{1}{D}{d}|+|\NGi{1}{G}{f(d)}|. \]

Let $w \in \NGi{2}{H}{d} \cap G$. Then $w \not\in \NGi{1}{H}{d}$, thus $w \not \in \NGi{1}{G}{f(d)}$. Moreover, there exists a vertex $v$ such that $d \rightarrow v \rightarrow w$.
We extend $f$ to  a graph homomorphism $D \cup G \rightarrow G$ such that $f(x)=x$ for all $x \in V(G)$. Hence, $f(d) \rightarrow f(v) \rightarrow f(w)=w$ in $G$. Since $w \not \in \NGi{1}{G}{f(d)}$, we obtain $w \in \NGi{2}{G}{f(d)}$. As every vertex of  $\NGi{2}{G}{f(d)}$ is a second neighbour of $d$ in $H$, we obtain $ \NGi{2}{H}{d} \cap V(G)=\NGi{2}{G}{f(d)}$. Since there are no arcs from $G$ to $D$, we obtain for all $d \in D$
\[|\NGi{2}{H}{d}| =|\NGi{2}{D}{d}|+|\NGi{2}{G}{f(d)}|. \]
Since $D$ and $G$ are Seymour-tight orientations, we have for all $d \in D$
\[|\NGi{2}{H}{d}| =|\NGi{2}{D}{d}|+|\NGi{2}{G}{f(d)}|=|\NGi{1}{D}{d}|+|\NGi{1}{G}{f(d)}|=|\NGi{1}{H}{d}|.\]

We now prove that $O$ is a Seymour-tight orientation.
    Note that there are no arcs from $D$ to $G$ in $O$. Since $D$ is a Seymour-tight orientation, $\NGi{1}{O}{d} = \NGi{2}{O}{d}$ for all $d \in V(D)$. Since $f$ is an bijection, it has a bijective inverse $f^{-1}: G \rightarrow D$, which is a graph cohomomorphism (but not necessarily a graph homomorphism). 
    For any $g \in G$, we have $|\NGi{1}{O}{g}|=2|\NGi{1}{G}{g}|$ since
    $g' \in \NGi{1}{G}{g}$ if and only if $g', f^{-1}(g') \in \NGi{1}{O}{g}$. Let $d \in \NGi{2}{O}{g} \cap D$, then $d \not \in \NGi{1}{O}{g}$, thus $f(d) \not \in \NGi{1}{G}{g}$. Moreover, there exists $d'$ such that $g \rightarrow d' \rightarrow d$ in $H$. We extend $f$ to a homomorphism $D \cup G \rightarrow G$ such that $f(x)=x$ for all $x \in V(G)$. Then $g=f(g) \rightarrow f(d') \rightarrow f(d)$. Since $f(d) \not \in \NGi{1}{G}{g} $, we obtain $f(d) \in \NGi{2}{G}{g}$. Since there are no arcs from $D$ to $G$, we obtain for all $g \in G$ 
    \[ |\NGi{2}{O}{g}|=2|\NGi{2}{G}{g}|=2|\NGi{1}{G}{g}| =|\NGi{1}{O}{g}|. \qedhere\]
\end{proof}

\section{Sullivan's conjecture} \label{Section: Sullivans conjecture}

Let $G$ be a directed graph. Then $\NGi[-]{1}{G}{v} = \{u \in V(G) \mid uv \in E(G)\}$ is the \textsl{in-neighbourhood} of the vertex $v$. In her survey on the Caccetta-H\"aggkvist conjecture~\cite{sullivan2006summary}, Sullivan proposed the following variation of Seymour’s conjecture.

\begin{Con}[Sullivan~\cite{sullivan2006summary}]
    Every oriented graph contains at least one vertex such that  $|N^+_2(v)| \geq |N^{-}_1(v)|$. 
\end{Con}

Note that this conjecture coincides with Seymour’s second neighbourhood conjecture when restricted to Eulerian orientations. Sullivan’s conjecture has received significantly less attention. Nevertheless, it is known to hold for tournaments, for graphs in which the number of transitive triangles is small relative to the number of arcs, and for almost all oriented graphs~\cite{ai2024seymour}. Analogous to Seymour-tight orientations, we call an orientation $G$ a \textsl{Sullivan-tight orientation} if $|\NGi[-]{1}{G}{v}|=|\NGi[+]{2}{G}{v}|$ for all $v \in V(G)$.

We will start by giving a few examples. 

\begin{Lemma}
     Let $\dir{C_n}$ be a directed cycle. Let $k$ be a natural number such that $2k < n$. Then the $k$-th power of $\dir{C_n}$, denoted by $\dir{C_n}$, is a Sullivan-tight orientation.
\end{Lemma}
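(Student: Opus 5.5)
The plan is to compute both neighbourhoods explicitly, as in the proof of Lemma~\ref{lem: kth power dicycle}. Label the vertices of $\dir{C_n}^k$ by $v_0,\ldots,v_{n-1}$, with indices taken modulo $n$. In the $k$-th power there is an arc $v_i \rightarrow v_j$ exactly when $j-i \in \{1,\ldots,k\}$ modulo $n$.

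\textbf{In-neighbourhood.} The first step is to identify $\NGi[-]{1}{\dir{C_n}^k}{v_i}$. By the description of the arcs, it is the set $\{v_{i-k},\ldots,v_{i-1}\}$. These indices are pairwise distinct modulo $n$ because $k<n$, so this set has exactly $k$ elements. It also does not contain $v_i$, and it is disjoint from the out-neighbourhood $\{v_{i+1},\ldots,v_{i+k}\}$. The disjointness follows from $2k<n$: the differences $j-i$ lie in $\{1,\ldots,k\}$ for out-neighbours and in $\{n-k,\ldots,n-1\}$ for in-neighbours, and these ranges are disjoint. This is precisely the condition that makes $\dir{C_n}^k$ an orientation, with no $2$-cycles.

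\textbf{Second out-neighbourhood.} The second step is to quote the computation from the proof of Lemma~\ref{lem: kth power dicycle}, which gives
\[\NGi{2}{\dir{C_n}^k}{v_i}=\{v_{i+k+1},\ldots,v_{i+2k}\}.\]
Since $2k<n$, these $k$ indices are distinct modulo $n$ and none of them equals $i$. Hence $|\NGi{2}{\dir{C_n}^k}{v_i}|=k$.

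\textbf{Conclusion.} Combining the two steps gives
\[|\NGi[-]{1}{\dir{C_n}^k}{v_i}| = k = |\NGi[+]{2}{\dir{C_n}^k}{v_i}|\]
for every $i$, so $\dir{C_n}^k$ is Sullivan-tight. Alternatively, one can note that $\dir{C_n}^k$ is $k$-regular both in and out, hence Eulerian. On Eulerian orientations Sullivan-tightness coincides with Seymour-tightness, so the statement also follows directly from Lemma~\ref{lem: kth power dicycle}.

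The only point requiring care is confirming that the index ranges do not wrap around and collide modulo $n$. This is exactly where the hypothesis $2k<n$ is used, so I do not expect a real obstacle.
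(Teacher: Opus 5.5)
Your proof is correct and takes the same route as the paper: you list the neighbourhoods by index and use $2k<n$ to rule out wrap-around, which gives $|\NGi{2}{\dir{C_n}^k}{v_i}|=k$. The paper's written proof in fact only repeats the Seymour-tight count $|\NGi{1}{\dir{C_n}^k}{v_i}|=k=|\NGi{2}{\dir{C_n}^k}{v_i}|$ and never counts in-neighbours, so your computation $\NGi[-]{1}{\dir{C_n}^k}{v_i}=\{v_{i-k},\ldots,v_{i-1}\}$ (or your Eulerian remark) supplies the step that Sullivan-tightness actually requires.
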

\begin{proof}
    Let $v_i$ be a vertex in the $k$-th power $\dir{C_n}^k$. Then $\NGi{1}{\dir{C_n}^k}{v_i} = \{v_{i+1}, \ldots, v_{i+k}\}$. Therefore, the vertices that can be reached in at most two steps from $v_i$ are the vertices $v_{i+1}, \ldots, v_{i+2k}$. Therefore, $\NGi{2}{\dir{C_n}^k}{v_i} = \{v_{i+k+1}, \ldots, v_{i+2k}\}$, which implies $|\NGi{1}{\dir{C_n}^k}{v_i}| = k = |\NGi{2}{\dir{C_n}^k}{v_i}|$ for all vertices $v_i$.
\end{proof}

\begin{Lemma}
    A tournament is a Sullivan-tight orientation if and only if it  has diameter 2.
\end{Lemma}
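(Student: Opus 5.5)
In a tournament $T$ on $n$ vertices, every vertex $v$ satisfies $|\NGi[-]{1}{T}{v}| + |\NGi[+]{1}{T}{v}| = n-1$, because every other vertex is joined to $v$ by exactly one arc. Also $\NGi[+]{2}{T}{v}$ is disjoint from $\NGi[+]{1}{T}{v}\cup\{v\}$, so it is a subset of $\NGi[-]{1}{T}{v}$. Hence $|\NGi[+]{2}{T}{v}| \leq |\NGi[-]{1}{T}{v}|$ always holds, and equality holds exactly when $\NGi[+]{2}{T}{v} = \NGi[-]{1}{T}{v}$. In words, $v$ is tight precisely when every in-neighbour of $v$ can be reached from $v$ by a directed path of length exactly two. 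The proof reduces to this pointwise characterization, applied to all $v$ at once.

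For the forward direction, suppose $T$ is Sullivan-tight. Take two distinct vertices $v,w$. If $v\to w$, then $w$ lies at distance $1$ from $v$. Otherwise $w\to v$, so $w\in\NGi[-]{1}{T}{v}=\NGi[+]{2}{T}{v}$, and $w$ lies at distance $2$ from $v$. So every vertex reaches every other vertex within two steps, and $T$ has (directed) diameter at most $2$.

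For the converse, suppose $T$ has diameter $2$, meaning every ordered pair of distinct vertices is joined by a directed path of length at most $2$. Take $w \in \NGi[-]{1}{T}{v}$. Since $T$ is an orientation, $w\notin\NGi[+]{1}{T}{v}$, so the distance from $v$ to $w$ is exactly $2$, which gives $w\in\NGi[+]{2}{T}{v}$. This proves $\NGi[-]{1}{T}{v}\subseteq\NGi[+]{2}{T}{v}$. The reverse inclusion is the general fact from the first paragraph, so the two sets are equal and $T$ is Sullivan-tight.

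No step here is a real obstacle. The one point to state explicitly is that ``diameter 2'' means directed diameter at most $2$ for all ordered pairs. This also covers the degenerate one-vertex tournament, which is trivially Sullivan-tight with diameter $0$; it is handled either by reading diameter as ``at most $2$'' or by assuming $n\ge 2$ and noting that $n=2$ cannot occur on either side.
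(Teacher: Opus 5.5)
Your proof is correct and is essentially the paper's argument. The paper also uses that in a tournament $N^{-}_1(v)=V(T)\setminus(\{v\}\cup N^{+}_1(v))$, and that $N^{+}_2(v)$ equals this set for every $v$ exactly when every vertex reaches every other within two steps. You split this into the two directions, and your remark on the one-vertex tournament (diameter $0$, yet trivially Sullivan-tight) is a legitimate edge case that the paper's one-line proof passes over.
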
 
\begin{proof}
    A tournament $T$ has diameter $2$ if and only if for all $v \in V(G)$, we have that $V(T)=v \cup \NGi[+]{1}{T}{v} \cup  \NGi[+]{2}{T}{v}$. Hence, $\NGi[-]{1}{T}{v} = V(T)-\{v\}- \NGi[+]{1}{T}{v} = \NGi[+]{2}{T}{v}$ for all $v \in V(T)$ if and only if the diameter of $T$ is 2.
\end{proof}

Note that every regular tournament has diameter 2, but not every tournament with diameter 2 is a regular tournament. Hence, there are Sullivan-tight orientations that are not Seymour-tight orientations, see Figure~\ref{fig: Sullivan not Seymour}. We now prove that Sullivan-tight orientations do not have sinks and all sources should be universal sources. Hence, our constructions for Seymour-tight orientations from Section~\ref{Sec: Strongly disconnected} do not extend for Sullivan-tight orientations. 

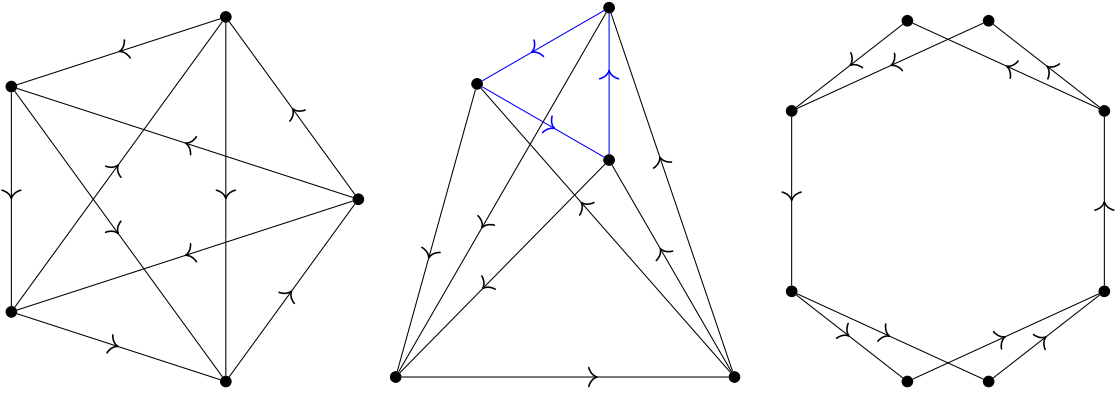
\begin{figure}[ht]
\begin{subfigure}[t]{0.3 \linewidth}
     \centering
    \begin{tikzpicture}[scale=0.85]
\tikzstyle{edge} = [
    postaction={
      decorate,
      decoration={
        markings,
        mark=at position 0.5 with {\arrow[scale=1.5]{>}}}}
]
\tikzstyle{vertex}=[circle, draw, fill=black,
                        inner sep=0pt, minimum width=4pt]

\def\Rbig{3}    

\foreach \j in {0,1,2,3,4} {
  \node[vertex] (a\j) at ($(72*\j:\Rbig)$) {};
}

\foreach \j in {0,1,2,3}{
\draw[edge] (a\j)--(a\number\numexpr\j+1\relax);
}
    
\draw[edge] (a4)--(a0);
\draw[edge] (a0)--(a2);
\draw[edge] (a2)--(a4);
\draw[edge] (a0)--(a3);
\draw[edge] (a3)--(a1);
\draw[edge] (a1)--(a4);

\end{tikzpicture}
\end{subfigure}
 \begin{subfigure}[t]{0.3 \linewidth}
     \centering
    \begin{tikzpicture}[scale=1.3]
\tikzstyle{edge} = [
    postaction={
      decorate,
      decoration={
        markings,
        mark=at position 0.6 with {\arrow[scale=1.5]{>}}}}
]
\tikzstyle{vertex}=[circle, draw, fill=black,
                        inner sep=0pt, minimum width=4pt]

\def\Rbig{2}    
\def\Rsmall{0.9}  

\foreach \j in {0,1,2} {
  \node[vertex] (a\j) at ($(90:\Rbig)+({60+120*\j}:\Rsmall)$) {};
}
\foreach \j in {0} {
  \node[vertex] (b\j) at ($(210:\Rbig)$) {};
}
\foreach \j in {0} {
  \node[vertex] (c\j) at ($(330:\Rbig)$) {};
}

\foreach \u in {a0,a1,a2} \foreach \v in {b0} \draw[edge] (\u) -- (\v);
\foreach \u in {a0,a1,a2} \foreach \v in {c0} \draw[edge] (\v) -- (\u);

\draw[edge] (b0)--(c0);
\draw[edge, color=blue] (a0)--(a1);
\draw[edge, color=blue] (a1)--(a2);
\draw[edge, color=blue] (a2)--(a0);

\end{tikzpicture}
\end{subfigure}
\begin{subfigure}[t]{0.3 \linewidth}
     \centering
    \begin{tikzpicture}[scale=0.6]
\tikzstyle{edge} = [
    postaction={
      decorate,
      decoration={
        markings,
        mark=at position 0.5 with {\arrow[scale=1.5]{>}}}}
]
\tikzstyle{vertex}=[circle, draw, fill=black,
                        inner sep=0pt, minimum width=4pt]

\def\Rbig{4}    
\def\Rsmall{0.9}  

\foreach \j in {0,1} {
  \node[vertex] (a\j) at ($(90:\Rbig)+({180*\j}:\Rsmall)$) {};
}
\foreach \j in {0,1} {
  \node[vertex] (b\j) at ($(270:\Rbig)+({180*\j}:\Rsmall)$) {};
}

\node[vertex] (d) at ($(30:\Rbig)$) {};
\node[vertex] (e) at ($(150:\Rbig)$) {};
\node[vertex] (f) at ($(210:\Rbig)$) {};
\node[vertex] (g) at ($(-30:\Rbig)$) {};

\draw[edge] (g)--(d);
\draw[edge] (d)--(a1);
\draw[edge] (a1)--(e);
\draw[edge] (d)--(a0);
\draw[edge] (a0)--(e);
\draw[edge] (e)--(f);
\draw[edge] (f)--(b1);
\draw[edge] (b1)--(g);
\draw[edge] (f)--(b0);
\draw[edge] (b0)--(g);


\end{tikzpicture}
\end{subfigure}
\caption{Three Sullivan-tight orientations that are not Seymour-tight.}\label{fig: Sullivan not Seymour}
\end{figure}

\begin{Lemma}
    No Sullivan-tight orientation has a sink. Moreover, every source in a Sullivan-tight orientation is connected to a set $X$ satisfying $N^{+}_1(X)=\emptyset$.
\end{Lemma}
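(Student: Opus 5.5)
The plan is to read both claims straight off the defining identity $|\NGi[-]{1}{G}{v}|=|\NGi[+]{2}{G}{v}|$ at one special vertex, without using any global structure. For a set $X$, I read $N^+_1(X)$ in the same sense as in Lemma~\ref{Source copy neighbourhood}: the out-neighbours of $X$ that lie outside $X$.

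\emph{Sinks.} Let $v$ be a sink in a Sullivan-tight orientation $G$. Then $\NGi[+]{1}{G}{v}=\emptyset$, so nothing is reachable from $v$ and $\NGi[+]{2}{G}{v}=\emptyset$. Tightness forces $\NGi[-]{1}{G}{v}=\emptyset$, so $v$ has no arcs at all. Hence a Sullivan-tight orientation has no sink with an in-neighbour. I will state explicitly that ``sink'' means a vertex of out-degree $0$ and in-degree at least $1$, since an isolated vertex (for instance $K_1$) trivially satisfies $0=0$. If it reads better, I can phrase this as: every non-isolated vertex has an out-neighbour.

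\emph{Sources.} Let $s$ be a source and put $X=\NGi[+]{1}{G}{s}$. Since $\NGi[-]{1}{G}{s}=\emptyset$, tightness gives $\NGi[+]{2}{G}{s}=\emptyset$. By the definition of the second out-neighbourhood, every out-neighbour of a vertex $x\in X$ therefore lies in $X\cup\{s\}$. The vertex $s$ is excluded because $s$ is a source, so no arc enters it. Thus every out-neighbour of $X$ lies in $X$, which is exactly $N^+_1(X)=\emptyset$ in the convention above. Equivalently, $X$ is closed under out-arcs, so $G[X]$ is a union of sink components of the condensation.

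The only delicate point is notational rather than mathematical. Under the literal reading, where $N^+_1(X)$ is the set of all out-neighbours of $X$, the claim would require $X$ to be independent, and this is false. For example, a universal source over a diameter-$2$ tournament is Sullivan-tight. I will therefore spell out the convention at the start of the proof. As a consistency check, I will note that the first part applies inside $X$: $G[X]$ cannot contain a vertex that is a sink of $G$ and has $s$ as an in-neighbour. So every vertex of $X$ has an out-neighbour inside $X$, and $G[X]$ contains a directed cycle. This matches the remark that the constructions of Section~\ref{Sec: Strongly disconnected} do not carry over to Sullivan-tight orientations.
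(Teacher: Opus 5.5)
Your argument is correct and is the paper's own proof. At a sink, $N^+_2(v)=\emptyset$ forces $N^-_1(v)=\emptyset$. At a source $s$, $N^-_1(s)=\emptyset$ forces $N^+_2(s)=\emptyset$, so $X=N^+_1(s)$ sends no arcs outside $X$. The two conventions you make explicit are exactly the ones the paper's proof uses tacitly: isolated vertices do not count as sinks, and $N^+_1(X)$ counts only out-neighbours outside $X$.

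Two of your side remarks are wrong, although the proof does not use either of them.

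First, a universal source $s$ over a diameter-$2$ tournament $T$ is \emph{not} Sullivan-tight. Each $v\in T$ gains the in-neighbour $s$, while $N^+_2(v)$ is unchanged because nothing reaches $s$. So $|N^-_1(v)|=|N^+_2(v)|+1$. Your conclusion that the literal reading fails is still right. Literally, every vertex of $X$ would be a sink with in-neighbour $s$, so by the first part every source would be isolated. You just need a different witness, for example $\Gamma(\mathbb{Z}_7,\{1,3\})$ plus a universal source $s$. There each $i$ has in-neighbours $i-1,i-3,s$ and $N^+_2(i)=\{i+2,i+4,i+6\}$, while $N^-_1(s)=N^+_2(s)=\emptyset$.

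Second, closure under out-arcs only makes $G[X]$ a union of strong components that is closed under reachability in the condensation. These components need not all be sink components, so your ``equivalently'' clause should be dropped.
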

\begin{proof}
    A sink $v$ in a Sullivan-tight orientation $G$ satisfies $\emptyset = \NGi[+]{1}{G}{v}$, implying $\NGi[+]{2}{G}{v}=\emptyset$. Hence, $|\NGi[-]{1}{G}{v}|=|\NGi[+]{1}{G}{v}|=0$ and thus has $v$ is an isolated vertex and not a sink.

    A source $w$ in a Sullivan-tight orientation $G$ satisfies $\NGi[-]{1}{G}{w}=\emptyset$, hence $|\NGi[+]{2}{G}{w}|= |\NGi[-]{1}{G}{w}|=0$. If $X=\NGi[+]{1}{G}{w}$, then $N^{+}_1(X)=\emptyset.$
\end{proof}

However, (generalized) lexicographic products preserve not only Seymour-tight orientations, but also Sullivan-tight orientations.

\begin{Lemma}\label{Sullivan lex}
Let $D$ and $G$ be two Sullivan-tight orientations. Then the lexicographic product $D[G]$ is also a Sullivan-tight orientation.
\end{Lemma}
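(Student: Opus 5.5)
We follow the proof of Lemma~\ref{Seymour lex}, now computing in-neighbourhoods as well as second out-neighbourhoods in $D[G]$. Fix a vertex $(v,i)$ of $D[G]$.

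\textbf{In-neighbourhood.} By the definition of the lexicographic product, $(u,j)\to(v,i)$ is an arc exactly when either $u\in \NGi[-]{1}{D}{v}$, or $u=v$ and $j\in\NGi[-]{1}{G}{i}$. These two cases are disjoint, so
\[|\NGi[-]{1}{D[G]}{(v,i)}| = |V(G)|\cdot|\NGi[-]{1}{D}{v}| + |\NGi[-]{1}{G}{i}|.\]

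\textbf{Second out-neighbourhood.} We reuse the description from the proof of Lemma~\ref{Seymour lex}:
\[\NGi{2}{D[G]}{(v,i)} = \left\{(w,j) \,\middle\vert\, w\in\NGi{2}{D}{v} \text{ or } w=v \text{ and } j\in\NGi{2}{G}{i}\right\}.\]
This description uses no tightness hypothesis, so it holds for arbitrary orientations; we re-verify it briefly.

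A walk of length two from $(v,i)$ projects onto $D$ in one of three ways:
\begin{enumerate}[label=(\roman*),nosep]
\item it stays inside the layer $X_v=\{(v,j)\}$, in which case it is a walk of length two in $G$;
\item it leaves $X_v$ and then moves within another layer, in which case its endpoint has first coordinate in $\NGi{1}{D}{v}$, so it is already a first out-neighbour;
\item it moves along two arcs of $D$, in which case its endpoint has first coordinate in $\NGi{1}{D}{v}\cup\NGi{2}{D}{v}$, or equal to $v$.
\end{enumerate}

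The delicate point is case (iii) when the walk returns to $v$ via $v\to u\to v$. This would require a $2$-cycle in $D$, which is impossible since $D$ is an orientation. So no vertex $(v,j)$ is reached except through $G$.

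Conversely, every $(w,j)$ with $w\in\NGi{2}{D}{v}$ is reached: take a path $v\to u\to w$ in $D$ and pass through any vertex of the layer $X_u$. Such a vertex is not a first out-neighbour of $(v,i)$, because $w\notin\NGi{1}{D}{v}$ and $w\ne v$. Hence
\[|\NGi{2}{D[G]}{(v,i)}| = |V(G)|\cdot|\NGi{2}{D}{v}|+|\NGi{2}{G}{i}|.\]

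\textbf{Conclusion.} By the Sullivan-tightness of $D$ and of $G$, the two counts agree term by term:
\[|\NGi[-]{1}{D}{v}|=|\NGi[+]{2}{D}{v}| \quad\text{and}\quad |\NGi[-]{1}{G}{i}|=|\NGi[+]{2}{G}{i}|.\]
Therefore $|\NGi[-]{1}{D[G]}{(v,i)}|=|\NGi{2}{D[G]}{(v,i)}|$ for every vertex $(v,i)$, and $D[G]$ is Sullivan-tight.

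The only step needing care is the second-neighbourhood description, specifically ruling out walks that leave the layer $X_v$ and return to it; the absence of $2$-cycles in $D$ handles this. The rest is counting.
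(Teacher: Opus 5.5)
Your proof is correct and is essentially the paper's argument. Both count $|\NGi[-]{1}{D[G]}{(v,i)}|$ and $|\NGi[+]{2}{D[G]}{(v,i)}|$ as $|V(G)|$ times the corresponding quantity in $D$ plus the one in $G$, then apply Sullivan-tightness of $D$ and $G$ term by term; your explicit appeal to the absence of $2$-cycles in $D$ makes precise a step the paper leaves implicit. One small slip: your three-way case split omits walks that first step inside $X_v$ and then follow an arc of $D$, but these end at first out-neighbours of $(v,i)$ just as in case (ii), so the count is unaffected.
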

\begin{proof}
Let $D$ and $G$ be two Seymour-tight orientations. Let $(v,i)$ be any vertex in $D[G]$. Then
    \[\NGi[-]{1}{D[G]}{(v,i)} = \left\{(w,j) \, \middle\vert \, w \in \NGi[-]{1}{D}{v} \textbf{ or } v=w \text{ and } j \in \NGi[-]{1}{G}{i}\right\}.\]
    In particular, $|\NGi[-]{1}{D[G]}{(v,i)}| = |V(G)| \cdot |\NGi[-]{1}{D}{v}| + |\NGi[-]{1}{G}{i}|$. Moreover,
    \[\NGi[+]{1}{D[G]}{(v,i)} = \left\{(w,j) \, \middle\vert \, w \in \NGi[+]{1}{D}{v} \textbf{ or } v=w \text{ and } j \in \NGi[+]{1}{G}{i}\right\}.\]

    All vertices $(w,j)$ that can be reached from $(v,i)$ in at most two steps satisfy either $w \in \NGi[+]{1}{D}{v} \, \cup \, \NGi[+]{2}{D}{v}$ or $w=v$ and $j \in \NGi[+]{1}{G}{i} \, \cup \, \NGi[+]{2}{G}{i}$. By deleting those in $\NGi[+]{1}{D[G]}{(v,i)}$, we obtain
    \[\NGi[+]{2}{D[G]}{(v,i)} = \left\{(w,j) \, \middle\vert \, w \in \NGi[+]{2}{D}{v} \textbf{ or } v=w \text{ and } j \in \NGi[+]{2}{G}{i}\right\}.\]
    This implies that $\NGi[+]{2}{D[G]}{(v,i)} = |V(G)| \cdot |\NGi[+]{2}{D}{v}| + |\NGi[+]{2}{G}{i}|$. Since $D$ and $G$ are Sullivan-tight orientations, we have
    \begin{align*} |\NGi[-]{1}{D[G]}{(v,i)}| &= |V(G)| \cdot |\NGi[-]{1}{D}{v}| + |\NGi[-]{1}{G}{i}|\\&= |V(G)| \cdot |\NGi[+]{2}{D}{v}| + |\NGi[+]{2}{G}{i}|=  |\NGi[+]{2}{D[G]}{(v,i)}| \end{align*}
    for all vertices $(v,i)$. Hence, $D[G]$ is also a Sullivan-tight orientation.
\end{proof}

Notice in the proof that the only information about $G$ we needed was $|V(G)|$ and by the same argument have the following result.
\begin{Lemma}
 Let $D$ be a Sullivan-tight orientation on $n$ vertices. Let $G_1,\ldots,G_n$ be Sullivan-tight orientations on $k$ vertices. Then the generalized lexicographic product $D[G_1,\ldots,G_n]$ is a Sullivan-tight orientation.
\end{Lemma}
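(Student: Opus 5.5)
I will mirror the proof of Lemma~\ref{Sullivan lex}, computing the in-neighbourhood and the second out-neighbourhood of a vertex directly in the generalized product. This works because all $G_i$ have the same number $k$ of vertices, so each block contributes the same factor $k$.

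Fix a vertex $(i,v)$ of $D[G_1,\ldots,G_n]$ with $i\in V(D)$ and $v\in V(G_i)$. By the definition of the generalized lexicographic product, $(j,w)$ is an in-neighbour of $(i,v)$ exactly when either $j\in\NGi[-]{1}{D}{i}$, or $j=i$ and $w\in\NGi[-]{1}{G_i}{v}$. Since every block has $k$ vertices, this gives
\[|\NGi[-]{1}{D[G_1,\ldots,G_n]}{(i,v)}| = k\,|\NGi[-]{1}{D}{i}| + |\NGi[-]{1}{G_i}{v}|.\]
The out-neighbourhood is described in the same way, with out-arcs in $D$ and in $G_i$.

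Next I determine the second out-neighbourhood. A vertex $(j,w)$ reachable from $(i,v)$ in at most two steps satisfies one of two conditions.
\begin{enumerate}[label=(\roman*),nosep]
\item $j\in \NGi[+]{1}{D}{i}\cup\NGi[+]{2}{D}{i}$. A path through another block still projects to a walk of length at most two in $D$ starting at $i$. Because $D$ is an orientation, that walk cannot return to $i$ itself.
\item $j=i$ and $w\in\NGi[+]{1}{G_i}{v}\cup\NGi[+]{2}{G_i}{v}$. This covers the case where both steps stay inside block $i$; a path that leaves block $i$ cannot come back to it within two steps.
\end{enumerate}
Conversely, every such $(j,w)$ is reachable within two steps. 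For $j\in\NGi[+]{2}{D}{i}$, go $i\to m\to j$ in $D$ and use any vertex of block $m$, which is nonempty since $k\ge 1$. Removing the first out-neighbourhood then yields
\[\NGi[+]{2}{D[G_1,\ldots,G_n]}{(i,v)}=\{(j,w)\mid j\in\NGi[+]{2}{D}{i}\ \text{or}\ j=i,\ w\in\NGi[+]{2}{G_i}{v}\}.\]
Its size is $k\,|\NGi[+]{2}{D}{i}|+|\NGi[+]{2}{G_i}{v}|$.

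Finally, I apply Sullivan-tightness. For $D$ it gives $|\NGi[-]{1}{D}{i}|=|\NGi[+]{2}{D}{i}|$, and for $G_i$ it gives $|\NGi[-]{1}{G_i}{v}|=|\NGi[+]{2}{G_i}{v}|$. Substituting these into the two counts shows that the in-degree of $(i,v)$ equals the size of its second out-neighbourhood, for every vertex $(i,v)$.

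The only delicate step is the description of the second out-neighbourhood. One must check that no vertex of block $i$ is reached via another block, and that every vertex of a block indexed by $\NGi[+]{2}{D}{i}$ is reached without already being an out-neighbour. I would settle this exactly as in Lemma~\ref{Sullivan lex}, using that $D$ has no $2$-cycles and that the blocks indexed by $\NGi[+]{1}{D}{i}$ and $\NGi[+]{2}{D}{i}$ are disjoint.
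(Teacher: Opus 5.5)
Your proposal is correct and follows the paper's route. The paper proves this lemma only by remarking that the argument of Lemma~\ref{Sullivan lex} goes through unchanged, because the blocks other than the one containing $(i,v)$ enter only through their common size $k$, and that is exactly the computation you carry out. Your explicit checks fill in details the paper leaves implicit: no vertex of block $i$ is reached through another block (there are no $2$-cycles in $D$), and every vertex of a block indexed by $\NGi[+]{2}{D}{i}$ is reached (blocks are nonempty).
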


Let $D$ be an orientation. Let $R_D$ be the matrix defined by $R_D(v,w)=1$ if $w \in \NGi[+]{2}{D}{v}\setminus \NGi[-]{1}{D}{v}$ and $R_D(v,w)=-1$ if  $w \in \NGi[-]{1}{D}{v} \setminus \NGi[+]{2}{D}{v}$ and $R_D(v,w)=0$ elsewhere. In particular, $R_D(v,w)=0$ if  $w \in \NGi[-]{1}{D}{v} \cap \NGi[+]{2}{D}{v}$.  By definition, $S_D \mathbf{1}=0$ if and only if $D$ is a Sullivan-tight orientation. 

\begin{Lemma}
    Let $D$ be an orientation on $[n]$ and let $\mathbf{x} \in \mathbb{Z}_{\geq 0}$ be a vector such that $R_D\mathbf{x} = 0$. For all $i \in [n]$, let $G_i$ be a Sullivan-tight orientation of size $\mathbf{x}_i$. Then the graph $D[(G_i)_{i \in V(D)}]$ is a Sullivan-tight orientation.
\end{Lemma}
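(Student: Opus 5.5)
The plan is to mimic the block-matrix proof of the analogous theorem for Seymour-tight orientations. The main difference is that we count in-neighbours and second out-neighbours directly, rather than working with a signed matrix that already has the right form. Write $H=D[(G_i)_{i\in[n]}]$ and fix a vertex $(i,v)$ with $v\in V(G_i)$. We need
\[|\NGi[-]{1}{H}{(i,v)}| = |\NGi[+]{2}{H}{(i,v)}|.\]

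\textbf{In-neighbourhood.} Straight from the definition of the generalized lexicographic product,
\[\NGi[-]{1}{H}{(i,v)} = \bigcup_{j\in \NGi[-]{1}{D}{i}} \{j\}\times V(G_j) \;\cup\; \{i\}\times \NGi[-]{1}{G_i}{v},\]
so its size is $\sum_{j\in \NGi[-]{1}{D}{i}} \mathbf{x}_j + |\NGi[-]{1}{G_i}{v}|$.

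\textbf{Second out-neighbourhood.} Next I will show
\[\NGi[+]{2}{H}{(i,v)} = \bigcup_{j\in \NGi[+]{2}{D}{i}} \{j\}\times V(G_j)\;\cup\;\{i\}\times \NGi[+]{2}{G_i}{v},\]
following the proof of Lemma~\ref{Sullivan lex}. Two facts are needed.

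\begin{itemize}
\item A two-step walk from $(i,v)$ cannot leave block $i$ and return to it, since that would need arcs $i\to j\to i$ in the orientation $D$. So inside block $i$ we see exactly $\NGi[+]{2}{G_i}{v}$.
\item For $j\neq i$, every vertex of block $j$ is a second out-neighbour exactly when $j\in\NGi[+]{2}{D}{i}$. Here I must use that the intermediate block $k$ on a path $i\to k\to j$ in $D$ is nonempty.
\end{itemize}

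This second fact is the one delicate step. If some $\mathbf{x}_k=0$, the block $k$ disappears and two-step paths through $k$ are lost. So I will assume $\mathbf{x}_i\ge 1$ for all $i$, which is implicit in taking each $G_i$ to be an orientation "of size $\mathbf{x}_i$". Equivalently, one restricts $D$ to the support of $\mathbf{x}$, but that changes $R_D$, so I will state the positivity assumption explicitly.

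\textbf{Combining.} Since $G_i$ is Sullivan-tight, the within-block terms $|\NGi[+]{2}{G_i}{v}|$ and $|\NGi[-]{1}{G_i}{v}|$ cancel. What remains is
\[|\NGi[+]{2}{H}{(i,v)}|-|\NGi[-]{1}{H}{(i,v)}| = \sum_{j} \mathbf{x}_j\bigl(\mathbb{1}[j\in\NGi[+]{2}{D}{i}]-\mathbb{1}[j\in\NGi[-]{1}{D}{i}]\bigr).\]
Each coefficient in this sum equals $R_D(i,j)$. For $j$ in both sets, or in neither, the coefficient is $0$, matching the convention in the definition of $R_D$. For $j=i$ the coefficient is also $0$, because $i$ lies in neither set. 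Hence the difference equals $(R_D\mathbf{x})_i=0$, and $H$ is Sullivan-tight.

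This can also be phrased as a block matrix $R_H$ with diagonal blocks $R_{G_i}$ and off-diagonal blocks $R_D(i,j)J$, exactly as in the Seymour case.
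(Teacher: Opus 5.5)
Your proof is correct and is essentially the argument the paper intends. The lemma is stated without proof as the Sullivan analogue of the block-matrix theorem for $S_D$, and your direct count is exactly that computation, with diagonal blocks $R_{G_i}$ and off-diagonal blocks $R_D(i,j)J$; in addition, you explicitly verify the description of second out-neighbourhoods that the block form takes for granted. Your positivity assumption is genuinely needed, because for $D=\dir{C_3}$ one has $R_D=0$, and $\mathbf{x}=(1,0,1)$ yields a single arc whose head has an in-neighbour but no second out-neighbour, so the statement with $\mathbf{x}\in\mathbb{Z}_{\geq 0}$ must be read with all entries positive.
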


Since $R_D(v,w)=0$ if  $w \in \NGi[-]{1}{D}{v} \cap \NGi[+]{2}{D}{v}$, there are graphs for which there are many zeros in $R_D$ implying a large kernel. For example, for a directed triangle $\dir{C_3}$, we have $R_{\dir{C_3}}=\textbf{0}$. Hence, any vector in $\mathbb{Z}_{>0}$ lies in the kernel of $R_{\dir{C_3}}$.

\begin{Cor}
    Let $G_1, G_2,G_2$ be three Sullivan-tight orientations, then $\dir{C_3}[G_1,G_2,G_3]$ is a Sullivan-tight orientation.
\end{Cor}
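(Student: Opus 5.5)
The plan is to deduce the corollary from the preceding lemma on generalized lexicographic products via the matrix $R_D$. Take $D=\dir{C_3}$ on vertex set $[3]$, and let $\mathbf{x}=(|V(G_1)|,|V(G_2)|,|V(G_3)|)^T\in\mathbb{Z}_{\geq 0}^3$. If we show $R_{\dir{C_3}}\mathbf{x}=0$, then that lemma, applied with $G_i$ of size $\mathbf{x}_i$, gives immediately that $\dir{C_3}[G_1,G_2,G_3]$ is Sullivan-tight.

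The key step is therefore to verify that $R_{\dir{C_3}}$ is the zero matrix, as remarked just before the statement. Label the cycle $1\to 2\to 3\to 1$. For vertex $1$ we have $\NGi[+]{1}{\dir{C_3}}{1}=\{2\}$, so $\NGi[+]{2}{\dir{C_3}}{1}=\{3\}$, and $\NGi[-]{1}{\dir{C_3}}{1}=\{3\}$. The two sets coincide, so both set differences in the definition of $R_D$ are empty and row $1$ is zero. By the cyclic symmetry of $\dir{C_3}$ the same holds for rows $2$ and $3$. Hence $R_{\dir{C_3}}\mathbf{x}=0$ for every $\mathbf{x}$.

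As a sanity check, and as a fallback in case the cited lemma is considered unproven, I would also verify the conclusion directly, in the same way as Lemma~\ref{Sullivan lex}. Take a vertex $(1,v)$ with $v\in V(G_1)$.

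\begin{itemize}
\item Its in-neighbours are all of $\{3\}\times V(G_3)$ together with $\{1\}\times\NGi[-]{1}{G_1}{v}$.
\item Its out-neighbours are $\{2\}\times V(G_2)$ together with $\{1\}\times \NGi[+]{1}{G_1}{v}$.
\item Vertices reachable in two steps outside the first out-neighbourhood are all of $\{3\}\times V(G_3)$ (through any vertex of $G_2$, provided $G_2$ is nonempty) together with $\{1\}\times\NGi[+]{2}{G_1}{v}$.
\end{itemize}

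No vertex of block $1$ enters the second out-neighbourhood via block $2$, since block $2$ only points to block $3$. So $|\NGi[-]{1}{}{(1,v)}|=|V(G_3)|+|\NGi[-]{1}{G_1}{v}|$ and $|\NGi[+]{2}{}{(1,v)}|=|V(G_3)|+|\NGi[+]{2}{G_1}{v}|$. These are equal because $G_1$ is Sullivan-tight, and the other blocks are symmetric.

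The only real obstacle is degenerate sizes: if some $G_i$ is empty, the two-step reachability through that block fails. I would handle this by assuming, as implicit in the statement, that each $G_i$ has at least one vertex. Under that assumption the argument above is complete.
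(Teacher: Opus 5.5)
Your proposal is correct and follows the paper's route: the corollary is read off from the preceding $R_D$-lemma via the observation $R_{\dir{C_3}}=\mathbf{0}$, which you verify correctly. Your direct block computation and your requirement that each $G_i$ be nonempty are sound additions; the paper needs the nonemptiness condition too, implicitly, since it takes the size vector to have positive entries.
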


\subsection{Putative counterexamples to Sullivan's conjecture}

We can also prove analogous results to those in Subsection~\ref{Section: Putative counterexamples}. As the proofs are very similar to the proofs of their respective results for Seymour-tight orientations, for brevity we only give proof sketches. 

\begin{Theorem} Let $G$ be a counterexample to Sullivan's conjecture. Suppose that $H$ satisfies $|\NGi[-]{1}{H}{v}| \geq |\NGi[+]{2}{H}{v}|$ for all $v \in v(H)$. Then $G[H]$ and $H[G]$ are counterexamples.
\end{Theorem}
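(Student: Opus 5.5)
The plan is to reuse the neighbourhood computation from the proof of Lemma~\ref{Sullivan lex}, which never used tightness. For any two orientations $D$ and $F$ and any vertex $(v,i)$ of $D[F]$, that argument gives
\[
|\NGi[-]{1}{D[F]}{(v,i)}| = |V(F)|\,|\NGi[-]{1}{D}{v}| + |\NGi[-]{1}{F}{i}|, \qquad |\NGi[+]{2}{D[F]}{(v,i)}| = |V(F)|\,|\NGi[+]{2}{D}{v}| + |\NGi[+]{2}{F}{i}|.
\]
The first identity is immediate from the definition of the lexicographic product. For the second, a vertex $(w,j)$ is reachable in at most two steps from $(v,i)$ exactly when either $w \in \NGi[+]{1}{D}{v}\cup\NGi[+]{2}{D}{v}$, or $w=v$ and $j \in \NGi[+]{1}{F}{i}\cup \NGi[+]{2}{F}{i}$. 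One should check that no mixed path, such as one step inside the fibre followed by one step in $D$, produces anything new. Such a path lands in $\NGi[+]{1}{D}{v}$, which already consists of first neighbours, so the formula holds. Removing the first out-neighbourhood then gives the stated second out-neighbourhood.

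A counterexample to Sullivan's conjecture is an orientation $G$ with $|\NGi[-]{1}{G}{v}| > |\NGi[+]{2}{G}{v}|$ for every $v$. Our hypothesis on $H$ is the non-strict inequality $|\NGi[-]{1}{H}{v}| \geq |\NGi[+]{2}{H}{v}|$. Both products are orientations, since lexicographic products of orientations are orientations.

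For $G[H]$, at a vertex $(v,i)$ we compare the formulas term by term:
\[
|V(H)|\,|\NGi[-]{1}{G}{v}| + |\NGi[-]{1}{H}{i}| > |V(H)|\,|\NGi[+]{2}{G}{v}| + |\NGi[+]{2}{H}{i}|.
\]
The first terms satisfy a strict inequality, because $|V(H)|\ge 1$ and $G$ is a counterexample. The second terms satisfy the non-strict hypothesis on $H$.

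For $H[G]$, at a vertex $(v,i)$ with $v\in V(H)$ and $i \in V(G)$, the roles swap. The weighted terms $|V(G)|\,|\NGi[-]{1}{H}{v}| \ge |V(G)|\,|\NGi[+]{2}{H}{v}|$ satisfy the non-strict inequality. The fibre terms satisfy $|\NGi[-]{1}{G}{i}| > |\NGi[+]{2}{G}{i}|$ strictly. So the sum is again strict, and $H[G]$ is a counterexample.

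The only real point of care is the second-neighbourhood formula: it must be an exact equality, not merely an upper bound, because we need a strict inequality to hold at every single vertex. That is the step I would verify explicitly. In particular, I would check the case $w=v$: here $(v,j)$ can only be reached through the fibre, since $D$ has no loops and no 2-cycles, so no path through another fibre returns to fibre $v$ within two steps. I would also check that the vertices $(w,j)$ with $w\in\NGi[+]{2}{D}{v}$ are all reached: each such $w$ has a two-step path $v\to u\to w$ in $D$, which lifts to a path through any vertex of fibre $u$.

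With the formula verified in both cases, the rest is the two-line comparison above, exactly parallel to Theorem~\ref{Lex product counterexample}.
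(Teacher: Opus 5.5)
Your proposal is correct and is essentially the argument the paper intends; the paper omits the proof, saying it parallels the Seymour case. That argument takes the exact counts of $N^-_1$ and $N^+_2$ in a lexicographic product from the proof of Lemma~\ref{Sullivan lex}, then compares term by term, with the counterexample factor supplying strictness, exactly as in Theorem~\ref{Lex product counterexample}. Your explicit verification that the second-neighbourhood formula is an exact equality, using the absence of loops and $2$-cycles and lifting two-step paths of $D$ through any vertex of the intermediate fibre, is a detail the paper leaves implicit.
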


\begin{Cor}
    If Sullivan's conjecture is false, then there exists $\epsilon > 0$ for which there exists a strongly connected orientation $O$ with arbitrary high minimum out-degree such that no vertex $v$ in $O$ satisfies $|N^+_2(v)| \geq (1-\epsilon) |N^{-}_1(v)|$.
\end{Cor}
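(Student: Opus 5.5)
We mirror the proof of Lemma~\ref{lem: sequence of counterexamples}, with in-neighbourhoods taking the role of out-neighbourhoods. Suppose Sullivan's conjecture is false. Let $O$ be a counterexample, chosen vertex-minimal, so that every vertex $v$ satisfies $|\NGi[+]{2}{O}{v}| < |\NGi[-]{1}{O}{v}|$.

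\textbf{Strong connectivity of $O$.} The sink components of the condensation are themselves counterexamples, since out-neighbourhoods stay inside them. Minimality then forces $O$ to be strongly connected. Note also that $O$ has no source: a source $v$ would have $|\NGi[-]{1}{O}{v}|=0$, and the strict inequality would fail. Hence every vertex has in-degree at least $1$.

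\textbf{Choice of $\epsilon$.} Let $\Delta^-$ be the maximum in-degree of $O$ and set $\epsilon = 1/\Delta^-$. By integrality, a vertex with in-degree $d$ has $|\NGi[+]{2}{O}{v}| \le d-1 \le (1-\epsilon)d$. We will prove the strict inequality $|N^+_2(v)| < (1-\epsilon')|N^-_1(v)|$ for some smaller $\epsilon'$, e.g.\ $\epsilon'=\epsilon/2$; this works because $d-1 < (1-\epsilon/2)d$. This strict version is exactly the statement that no vertex satisfies $|N^+_2(v)|\ge(1-\epsilon')|N^-_1(v)|$.

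\textbf{Iterated lexicographic powers.} Define $O_1=O$ and $O_{k+1}=O[O_k]$. The proof of Lemma~\ref{Sullivan lex} gives, for $(v,w)\in O[O_k]$,
\begin{align*}
|\NGi[-]{1}{O_{k+1}}{(v,w)}| &= |V(O_k)|\,|\NGi[-]{1}{O}{v}| + |\NGi[-]{1}{O_k}{w}|,\\
|\NGi[+]{2}{O_{k+1}}{(v,w)}| &= |V(O_k)|\,|\NGi[+]{2}{O}{v}| + |\NGi[+]{2}{O_k}{w}|.
\end{align*}
The computation of $N^+_2$ there uses only the structure of the product, not tightness. Multiplying the inequality for $O$ by $|V(O_k)|$ and adding the inductive inequality for $O_k$ shows that the strict inequality with $(1-\epsilon')$ propagates by induction on $k$.

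\textbf{Strong connectivity and degree growth.} Each $O_k$ is strongly connected, since $O$ is (Lemma~\ref{Seymour lex} remark). The minimum out-degree of $O_{k+1}$ is at least $\delta^+(O)\,|V(O)|^k$. Moreover $\delta^+(O)\ge1$: a sink $v$ has $|N^+_2(v)|=0$, so it would satisfy the strict inequality only if it had in-degree at least $1$, which is fine for the inequality itself. However, strong connectivity of $O$ with more than one vertex excludes sinks. (A one-vertex $O$ is not a counterexample, since $0<0$ fails.) Hence the minimum out-degree of $O_k$ tends to infinity.

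\textbf{Main obstacle.} The delicate point is only bookkeeping: securing strict rather than weak inequality, and justifying strong connectivity and positive out-degree from minimality. All of this is handled above.
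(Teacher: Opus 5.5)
Your overall plan is the paper's: iterate $O[O[\cdots[O]\cdots]]$ and rerun the argument of Lemma~\ref{lem: sequence of counterexamples}. The product formulas for $|N^-_1|$ and $|N^+_2|$, the choice $\epsilon=1/\Delta^-$ with halving to get a strict inequality, and the induction are all fine.

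The gap is in your strong connectivity step. You rely on it for $\delta^+(O)\ge 1$, for strong connectivity of every $O_k$, and hence for the unbounded minimum out-degree. You claim that a sink component $C$ of the condensation is again a counterexample ``since out-neighbourhoods stay inside them''. That is the Seymour argument, and it does not transfer. Restricting to $C$ preserves $N^+_2(v)$, but Sullivan's inequality also involves $N^-_1(v)$, and in-arcs coming from other components are lost. For instance, let $C=\dir{C_3}$ and let an outside vertex send arcs to all of $C$. Then each $v\in C$ has $|N^+_2(v)|=1<2=|N^-_1(v)|$ in the whole graph, but $1<1$ fails in $C$ alone. So the implication you use is invalid.

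The repair is to use a \emph{source} component instead. Let $C$ be a strongly connected component with no arcs entering it.
\begin{itemize}
\item Every in-neighbour of $v\in C$ lies in $C$, so $N^-_1(v)$ is unchanged.
\item $N^+_2(v)$ computed inside $C$ is contained in $N^+_2(v)$ computed in $O$. Non-adjacency $v\not\to w$ in $C$ is non-adjacency in $O$, since $C$ is induced, and a two-step path in $C$ is also one in $O$.
\end{itemize}
Hence $|N^+_2(v)|$ can only drop, so $C$ is a counterexample, and vertex-minimality forces $O=C$ to be strongly connected.

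With that fixed, your remaining steps go through: $|V(O)|\ge 2$, no sinks, strong connectivity of each $O_k$, and $\delta^+(O_k)\ge \delta^+(O)\,|V(O)|^{k-1}\to\infty$.
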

\begin{proof}
    Let $H$ be a counterexample, then we look at the sequence $H[H[\ldots[H]\ldots]]$. We can now use the same arguments as in the proof of Lemma~\ref{lem: sequence of counterexamples}.
\end{proof}

\begin{Cor}
    If Sullivan's conjecture is false, then there exists $k \in \mathbb{N}$ for which there are strongly connected counterexamples with arbitrarily many vertices such that $\Delta^{+} \leq k$. Moreover, there are also counterexamples for which the orientation satisfies $\delta^{+} \geq \frac{n}{2}-k$.
\end{Cor}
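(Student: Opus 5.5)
The plan is to follow the proof of Corollary~\ref{Close to reg tournament} and the remark after it that uses $G=\dir{C_n}$, with the Sullivan analogue of Theorem~\ref{Lex product counterexample} replacing the Seymour version. Fix a counterexample $H$ to Sullivan's conjecture. First I would reduce to $H$ strongly connected. A vertex-minimal counterexample that is not strongly connected should contain a sink strong component $C$. For $v\in C$, the sets $\NGi[+]{2}{}{v}$ and $\NGi[-]{1}{C}{v}$ are the same whether computed in $C$ or in $H$, while $\NGi[-]{1}{H}{v}\supseteq \NGi[-]{1}{C}{v}$. This inclusion goes in the wrong direction, so $C$ is not automatically a counterexample.

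To avoid this issue I would not assume $H$ is strongly connected. Instead I would obtain strong connectivity from the outer factor: if $D$ is strongly connected, then $D[H]$ is strongly connected, exactly as noted in Section~\ref{Sec: examples} and in Lemma~\ref{Seymour lex}.

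For the first claim, take $D=\dir{C_n}$ for arbitrary $n\ge 3$. A directed cycle is Sullivan-tight, by the lemma on powers of cycles with $k=1$, so in particular it satisfies $|\NGi[-]{1}{D}{v}|\ge|\NGi[+]{2}{D}{v}|$ for every $v$. By the Sullivan version of Theorem~\ref{Lex product counterexample}, $\dir{C_n}[H]$ is then a counterexample. It is strongly connected and has $n|V(H)|$ vertices, which is arbitrarily large. Its maximum out-degree is $|V(H)|+\Delta^+(H)\le 2|V(H)|$, so we may set $k=2|V(H)|$.

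For the second claim, take $D=T$ to be a regular tournament on $m$ vertices. By the lemma characterising Sullivan-tight tournaments, $T$ is Sullivan-tight, since regular tournaments have diameter $2$. Hence $T[H]$ is a counterexample. Every vertex of $T[H]$ has out-degree at least $\frac{m-1}{2}|V(H)|=\frac{n}{2}-\frac{|V(H)|}{2}$, where $n=m|V(H)|$. This gives $\delta^+\ge \frac n2-k$ with the same $k$, and these counterexamples are also strongly connected.

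The only step needing care is the Sullivan analogue of Theorem~\ref{Lex product counterexample}, which the paper states without proof. I would check it directly using the neighbourhood formulas in the proof of Lemma~\ref{Sullivan lex}:
\[
|\NGi[-]{1}{}{(v,i)}|=|V(H)|\,|\NGi[-]{1}{D}{v}|+|\NGi[-]{1}{H}{i}| \quad\text{and}\quad |\NGi[+]{2}{}{(v,i)}|=|V(H)|\,|\NGi[+]{2}{D}{v}|+|\NGi[+]{2}{H}{i}|.
\]
Since $|\NGi[-]{1}{D}{v}|\ge|\NGi[+]{2}{D}{v}|$ and $|\NGi[-]{1}{H}{i}|>|\NGi[+]{2}{H}{i}|$, the inequality $|\NGi[-]{1}{}{(v,i)}|>|\NGi[+]{2}{}{(v,i)}|$ holds at every vertex, which is exactly what is needed.
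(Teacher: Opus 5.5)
Your proposal is correct and takes essentially the same route as the paper. The paper also forms $\dir{C_n}[H]$ and $T_m[H]$ for a counterexample $H$, gets strong connectivity from the outer factor, and reads off $\Delta^+$ and $\delta^+$. Your direct check of the Sullivan analogue of Theorem~\ref{Lex product counterexample}, which the paper only states, is valid, and you were right to avoid reducing to a strongly connected $H$, since a sink component of a Sullivan counterexample need not itself be a counterexample.
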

\begin{proof}
    Take the lexicographic product $G[H]$ where $H$ is a counterexample to Sullivan's conjecture. Take $G$ to be $\overline{C_n}$ where $n \gg |V(H)|$. To construct an example with high out-degree, take $G$ to be a regular tournament $T_n$ where again $n \gg |V(H)|$.
\end{proof}

\section{Seymour Cayley orientations}\label{Section: Cayley Seymour}

In this section, we will look at Cayley digraphs that are also Seymour orientations. Recal that any Cayley graph is vertex transitive and therefore any Cayley Seymour orientation is either a counterexample or Seymour-tight.
Let $\Gamma=\Gamma(G,S)$ be a Cayley digraph and a Seymour orientation. 
The connection set $S$ satisfies $S \cap S^{-1} = \emptyset$. Moreover, $|\NGi[+]{1}{G}{v}| = |S|$ and $|\NGi[+]{1}{G}{v} \cup \NGi[+]{2}{G}{v}|=|S^2 \cup S|$, thus it must satisfy $|S \cup S^2| \leq 2|S|$.  Let $S_1=S\cup\{1\}$ where $1$ is the identity element. Then $S_1^2= S \cup S^2 \cup \{1\}$ and since, $S \cap S^{-1} = \emptyset$, the  only way to write $1$ as the product of two elements in $S_1$ is as $1=1\cdot 1$. In particular,
\[|S_1^2| = |S \cup S^2| + 1 \leq 2|S|+1 = 2|S_1|-1.\]
A pair $A,B \subseteq G$ for which $|AB| < |A|+|B|$ is called a \textsl{critical pair}. The study of critical pairs is an important topic within structural additive combinatorics~\cite{Grynkiewicz2013}. Kemperman proved the following result. 

\begin{Lemma}[Kemperman~\cite{kemperman1956complexes, olson1984sum}]\label{Lem: Kemperman}
Let $G$ be a group and let $A$ and $B$ be a finite subset of $G$ with $1 \in A \cap B$. If $1 = ab$ with $a \in A$ and $b \in B$ implies $a=b=1$, then
\[|AB| \geq |A|+|B|-1.\]
\end{Lemma}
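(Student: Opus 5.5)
We prove Kemperman's lemma by induction on $|B|$, using the Dyson $e$-transform. Suppose the statement fails, and among all counterexample pairs $(A,B)$ in $G$ choose one with $|B|$ minimal, and subject to that with $|A|$ maximal. (The maximal choice makes sense because $|A| \le |AB| \le |A|+|B|-2$ bounds $|A|$.)

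\textbf{Base case.} If $|B|=1$, then $B=\{1\}$ and $|AB|=|A|=|A|+|B|-1$, so the inequality holds. Hence we may assume $|B|\ge 2$.

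\textbf{Setting up the transform.} If $AB=A$, then $Ab\subseteq A$ for every $b\in B$. Since right multiplication is injective, $Ab=A$, so $A$ is a union of left cosets of $\langle B\rangle$. In particular $A\supseteq \langle B\rangle \ni b^{-1}$ for each $b \in B$. Taking any $b\neq 1$ in $B$ gives $1=b^{-1}b$ with $b^{-1}\in A$ and $b \neq 1$, contradicting the hypothesis. So there are $a\in A$ and $b\in B$ with $ab\notin A$.

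Fix such an $a$ and use the non-abelian variant of the transform with this element $e=a$:
\[
A'=A\cup aB,\qquad B'=B\cap a^{-1}A .
\]
Then $A'B'\subseteq AB$: if $x\in A$, then $xB'\subseteq AB$, and if $x=ab$ with $b \in B$, then $xB'\subseteq ab\,a^{-1}A$.

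\textbf{Main obstacle.} The step $ab\,a^{-1}A\subseteq AB$ is the delicate one. In the abelian case it is immediate, but in general this naive transform does not work, and one must replace it by Olson's or Kemperman's version, e.g.\ $A'=A\cup Ba$... More robustly, I would use the transform
\[
A' = A\cup Bb'...
\]
Concretely, the plan is to use the version in which $B' = B\cap A^{-1}a'$ and $A'=A\cup a'B$ for a suitable $a'$, chosen so that $A'B'\subseteq AB$ holds by construction, since $(a'b)(b'')$ lands in $A B$ via an element of $A$. This part I expect to need the most care.

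\textbf{Conclusion of the induction.} Once the transform is set up, the counting identity $|A'|+|B'|=|A|+|B|$ holds, since $|aB\setminus A| = |B\setminus a^{-1}A|$. We then check that the hypothesis is preserved:
\begin{itemize}
\item $1\in A'\cap B'$;
\item if $1=xy$ with $x\in A'$ and $y\in B'\subseteq B$, then either $x\in A$, so $x=y=1$ by the hypothesis, or $x=ab_0$, in which case $y=b_0^{-1}a^{-1}$ must be excluded.
\end{itemize}
The choice $ab\notin A$ gives $|B'|<|B|$. Minimality of $|B|$ then yields $|AB|\ge|A'B'|\ge|A'|+|B'|-1=|A|+|B|-1$, a contradiction. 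If instead $B'=B$, the maximality of $|A|$ gives the contradiction.
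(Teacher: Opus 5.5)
The paper does not prove this lemma; it quotes it from Kemperman and Olson. Your argument is the Dyson $e$-transform proof of Scherk's theorem, and for \emph{abelian} $G$ it is complete. The identity $(a+b_1)+(a_1-a)=a_1+b_1$ gives $A'+B'\subseteq A+B$. Translation by $a$ gives $|A'|+|B'|=|A|+|B|$. Finally, $0=(a+b_0)+(a_1-a)$ forces $a_1+b_0=0$, hence $b_0=0$ and $x=a\in A$, which is the case already handled.

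The lemma, however, is stated for arbitrary groups, and the paper needs that generality: it is what shows that no Cayley digraph of \emph{any} group is a counterexample. In a non-abelian group both the containment $A'B'\subseteq AB$ and the preservation of the unique-representation hypothesis depend on moving $b_1$ past $y$, and both fail. For instance, take the free group on $a,b,c$ with $A=\{1,a,ac\}$, $B=\{1,b,c\}$ and $e=a$ (so $ab\notin A$). Then $A'=\{1,a,ab,ac\}$, $B'=\{1,c\}$, and $abc\in A'B'\setminus AB$.

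The replacement you sketch, $A'=A\cup a'B$, $B'=B\cap A^{-1}a'$, does not repair this. A product $a'b\cdot\alpha^{-1}a'$ is just as uncontrolled. Even the size identity is lost, because $a'b\notin A$ means $b\notin a'^{-1}A$, which is a different condition from $b\notin A^{-1}a'$.

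So the proposal proves only the abelian case, and the non-commutative step, which is precisely Kemperman's contribution beyond Scherk, is missing. This is not a matter of finding the right one-sided transform. Transforms that do satisfy containment in every group, such as $(A\cup Ax,\,B\cap x^{-1}B)$ with $x\in B$, do not in general preserve $|A|+|B|$. Nor do they preserve the hypothesis: if also $x^{-1}\in B$, then $1=x\cdot x^{-1}$ is a second representation. An additional idea, as in Kemperman's or Olson's arguments, is therefore required.

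A minor point: your justification for taking $|A|$ maximal is wrong. The chain $|A|\le|AB|\le|A|+|B|-2$ only says $|B|\ge 2$ and does not bound $|A|$. Fortunately that choice is never used, because $ab\notin A$ already gives $|B'|<|B|$.
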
 

 Kemperman's Lemma implies $|S_1^2| = 2|S_1|-1$ and thus $S$ satisfies $|S^2 \cup S| = 2 |S|$. Therefore, any Seymour Cayley orientation is Seymour-tight. In particular, there is no Cayley counterexample, which was first observed by Hamidoune~\cite{hamidoune1981application}. From now on, we will write Seymour orientation for brevity (although they are actually Seymour-tight).

To classify all Seymour Cayley orientations, we have to find all sets $S$ for which the inequality in Lemma~\ref{Lem: Kemperman} is tight for the pair $(S,S)$. Kemperman \cite{kemperman1960small} classified all such pairs of sets in abelian groups. We will use this result to prove that any Seymour Cayley orientation is the (possibly repeated) lexicographic products of empty graphs, the $k$-th power of directed cycles, and regular tournaments. We first need to introduce some notation; note we will use additive notation for groups while discussing abelian groups.

\begin{Def}[\cite{kemperman1960small}]
A pair $(A',B')$ of non-empty finite subsets in an abelian group $G$ is an
\textsl{elementary critical pair} if at least one of the following conditions holds:
\begin{enumerate}
\item[(a)] Either $|A'|=1$ or $|B'|=1$.

\item[(b)] $A'$ and $B'$ are arithmetic progressions with a common difference
$d$ such that the order of $d$ satisfies $d\ge |A'|+|B'|-1$. Thus $A'+B'$ is an arithmetic
progression of difference $d$, and there is at least one element that can be uniquely written as the sum of an element in $A'$ and one in $B'$.

\item[(c)] For some finite subgroup $H$, both $A'$ and $B'$ are contained in an
$H$-coset and
$|A'|+|B'|=|H|+1$.
Here $A'+B'$ is an $H$-coset and there is precisely one element that can be uniquely written as the sum of an element in $A'$ and in $B'$.

\item[(d)] $A'$ is aperiodic and for some finite subgroup $H$ of $G$,
$A'$ is contained in an $H$-coset. $B'$ is of the form $B'=g_0-(\overline{A'} \cap  (a+H))$ where $a\in A'$. Hence, $A'+B' = (g_0+H)-g_0$. In this case, no element can be uniquely written as the sum of an element in $A'$ and in $B'$.
\end{enumerate}
\end{Def}

Observe that each of the conditions $(a)-(d)$ implies $|A'+B'|=|A'|+|B'|-1$. We reformulate \cite[Theorem 5.1]{kemperman1960small} below, avoiding specialized notation.

\begin{Theorem}\cite[Theorem 5.1]{kemperman1960small} \label{Thm equality Kemperman}
Let $G$ be an abelian group with $|G|\geq 2$. Let $A,B$ be finite non-empty
subsets of $G$ such that 
\[
|A+B|=|A|+|B|-1 
\]
and suppose there exists an element $c \in A+B$ having a \textsl{unique representation} $c=a_0+b_0$ with $a_0 \in A$ and $b_0 \in B$. 
Then there exists non-empty subsets $A'\subseteq A$ and $B'\subseteq B$, and a subgroup $F\leq G$ of order $|F| \geq 2$, together with a quotient map $\phi: G \rightarrow G \slash F$, such that all of the following hold.
\begin{enumerate}
\item[(i)] The pair $(A',B')$ is an elementary critical pair, and each of $A',B'$ is contained
in an $F$-coset.

\item[(ii)] The element $\delta=\phi(A'+B')$ in $G \slash F$
has $\delta=\phi( A')+\phi( B')$ as its only representation of the form
$\delta=\phi(a)+\phi (b)$, with $a\in A$ and $b\in B$. 

\item[(iii)] The complement $A\backslash A'$ satisfies $(A\backslash A')+F=(A\backslash A')$, and similarly,
$(B\backslash B')+F = (B\backslash B')$. Hence, from (ii), the
complement $C'$ of $A'+B'$ in $A+B$ satisfies $C'+F=C'$.

\item[(iv)] Finally, $|\phi (A + B)|=|\phi (A)|+|\phi (B)|-1$.
\end{enumerate}
\end{Theorem}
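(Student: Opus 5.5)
Since this is Kemperman's structure theorem, quoted from~\cite{kemperman1960small}, the paper does not need to reprove it. If I were to reconstruct the argument, I would proceed by induction on $|A|+|B|$. The base cases are those where $|A|=1$ or $|B|=1$; these are elementary critical pairs of type (a), and taking $F$ to be any subgroup of order at least $2$ containing $A-A$ or $B-B$ handles (i)--(iv) directly.

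The main tool I would use is the Dyson (or Kemperman) $e$-transform. For $e\in A-B$ (after a suitable translation), replace $(A,B)$ by $(A\cup(e+B),\,B\cap(A-e))$. This keeps $|A|+|B|$ fixed, does not enlarge the sumset, and I would need to check that it preserves an element with a unique representation. Combined with the Cauchy--Davenport/Kemperman bound (Lemma~\ref{Lem: Kemperman}), the equality $|A+B|=|A|+|B|-1$ is preserved for the transformed pair, so the structure can be pulled back from it.

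The core of the argument is a dichotomy on the stabilizer $H=\{g: g+A+B=A+B\}$.

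\emph{Periodic case ($|H|\ge 2$).} Kneser's theorem gives $|A+B|\ge |A+H|+|B+H|-|H|$. Equality in the hypothesis then forces $A$ and $B$ each to miss only a few elements of their $H$-saturations. The uniquely represented element $c=a_0+b_0$ pins down one $H$-coset pair. Set $F=H$, $A'=A\cap(a_0+F)$ and $B'=B\cap(b_0+F)$. The counting forces $|A'|+|B'|=|F|+1$, which is type (c), and forces $A\setminus A'$ and $B\setminus B'$ to be $F$-periodic, which is (iii). Statement (ii) then follows because $c$ is uniquely represented, and (iv) follows by dividing the cardinality identity by $|F|$.

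\emph{Aperiodic case.} Here one must produce a nontrivial subgroup $F$ from scratch. I would choose a maximal $e$-transform chain and argue that it must terminate in a pair with a small side. I would then read off either an arithmetic-progression structure (type (b), where $F$ is generated by the common difference) or a type (d) configuration inside a coset, and lift this back through the transforms to the original $(A,B)$.

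The main obstacle is this aperiodic case. The difficulty is to show that the elementary pair $(A',B')$ is confined to a single $F$-coset while the remaining parts of $A$ and $B$ are genuinely $F$-periodic, with $\phi(A')+\phi(B')$ still uniquely represented in $G/F$. My first attempt would be to apply the inductive hypothesis to the images $\phi(A),\phi(B)$ in $G/F$, where $F$ is the stabilizer of a suitable sub-sumset. In that approach, (iv) supplies the critical-pair equality in the quotient, and the unique representation descends via (ii).
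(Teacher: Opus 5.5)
Citing Kemperman is all the paper does here. It gives no proof of this statement beyond the reference \cite{kemperman1960small}, so your first sentence matches the paper. The reconstruction you sketch, however, would not go through as written.

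In your periodic case the counting is fine as far as it goes. With $F=\mathrm{stab}(A+B)$, Kneser's bound together with the uniqueness of $c=a_0+b_0$ does force $|A'|+|B'|=|F|+1$, the $F$-periodicity of $A\setminus A'$ and $B\setminus B'$, and (ii) and (iv). What it does not give is that $(A',B')$ is elementary. A type (c) pair must have exactly one uniquely represented sum, as the paper's description of (c) records. For example, in $G=\mathbb{Z}_6$ let $A=\{3,5\}$ and $B=\{1,2,3,4,5\}$. Then $A+B=\mathbb{Z}_6$, $|A+B|=|A|+|B|-1$, and $3=5+4$ is uniquely represented, so your recipe returns $F=\mathbb{Z}_6$ and $(A',B')=(A,B)$. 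This pair has two uniquely represented sums, $3=5+4$ and $5=3+2$, so it is not of type (c). It is also not of type (a), nor of type (b) (the only possible common difference $\pm 2$ has order $3<6$), nor of type (d) (which would force $|B'|=|H|-2\in\{1,4\}$). The valid decomposition uses the proper subgroup $F=\{0,2,4\}$, with $A'=\{3,5\}$ and $B'=\{2,4\}$ of type (b), and $B\setminus B'=1+F$. So the periodic case needs a further argument when there is a second uniquely represented sum $c'$. In that case the complement $\overline{A'}$ of $A'$ in its coset satisfies $|(\overline{A'}+d)\setminus\overline{A'}|\le 1$ for $d=c'-c$, which points to $\langle d\rangle$ rather than the stabilizer.

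The aperiodic case, where the substance of the theorem lies, has a more basic problem. Choosing $e\in A-b_0$ settles your uniqueness worry, since then $c$ stays uniquely represented as $a_0+b_0$. But with that choice a maximal chain of $e$-transforms can only stop when $|B|=1$. Stopping means $A+(b-b_0)=A$ for every $b\in B$, so any $b\neq b_0$ would give a second representation $c=(a_0-b+b_0)+b$. Since the sumset never grows and $|A|+|B|$ is preserved, the terminal pair is always $(A+B-b_0,\{b_0\})$. That pair is of type (a) and is determined by $A+B$ alone, so no type (b) or (d) structure can be ``read off'' from it.

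The structure has to be extracted at an intermediate step, for instance by an induction on $|B|$, which, unlike $|A|+|B|$, does decrease under transforms. It must then be transferred back from $(A(e),B(e))$ to $(A,B)$, knowing only that $A(e)\supseteq A$, $B(e)\subseteq B$ and $A(e)+B(e)\subseteq A+B$. That transfer is the entire content of Kemperman's theorem, and your sketch gives no mechanism for it. Likewise, taking $F$ to be ``the stabilizer of a suitable sub-sumset'' and inducting in $G/F$ presupposes the nontrivial $F$ that this case is supposed to produce.
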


Based on this theorem, we can classify all Seymour orientations that are  Cayley digraphs of an abelian group.

\ClassificationAbelian*
\begin{proof}
    Recall that we only need to classify the Seymour-tight orientations. We will prove this theorem with induction on the number of vertices. If the graph has one vertex, it is clearly of the correct form.
    
    Let $S$ be the connection set of a Seymour-tight orientation in an abelian group $G$. Let $A = S \cup \{0\}$. Then $|A+A| = 2 |S|+|\{0\}|= 2|A|-1$. Moreover, if $0 = a+a'$ where $a,a'\in A$, then $a=a'=0$. Hence, $(A,A)$ is a pair as described in Theorem~\ref{Thm equality Kemperman}. Thus there exist subsets $A', A'' \subseteq A$ and a subgroup $F$ of $G$ such that all four properties are satisfied. In particular, $(A',A'')$ is an elementary critical pair  and each is contained in a $F$-coset.

   If $0 \in A\backslash A'$, then by property (iii), $F = 0 +F \subseteq A\backslash A' \subseteq A$. Since $A \cap A^{-1} = \{0\}$ and $|F| \geq 2$ is a subgroup, this is impossible. Thus, $0 \in A'$ and similarly $0 \in A''$. Hence, both $A'$ and $A''$ are subsets of $0+F=F$. By property (ii), every element in $A \cap F$ is an element of $A'$ and $A''$. Thus, $A'=A''=A \cap F$ implying $A\backslash A'=A\backslash A''=A \backslash F$.

   By property (iii), $A\backslash A'$ is of the form $\{X+F \mid X \subseteq G\slash F, \text{ where } 0 \not\in X\}$. By property (iv), we have $$|(X+\{0\})+(X+\{0\})|= 2|X+\{0\}|-1$$ in $G\slash F$. From property (ii), we obtain that $(X\cup \{0\}) \cap (X^{-1}\cup\{0\} = \{0\}$ in $G \slash F$. Therefore, $|(X+X) \cup X|=2|X|$ and $X$ is the connection set of a Seymour orientation in the abelian group $G \slash F$. By induction, the Cayley graph corresponding to $X$ can be written as the (possibly repeated) lexicographical products of empty graphs, $k$-th powers of directed cycles and regular tournaments.

    The pair $(A',A')$ is an elementary critical pair and we know $0 \in A'$. We know look at the four cases following from the definition of elementary critical pairs. Option (a) implies that $A' = \{0\}$, thus the Cayley graph $\Gamma(F, A'-\{0\})$ is the empty graph. Option (b) together with $0 \in A'$ implies that $A' = \{0,d, \ldots, kd\}$ for some $k$ where the order of $d$ is at least $2k-1$. Hence, the Cayley graph $\Gamma(F, A'-\{0\})$ is the disjoint union of a  $k$-th power of a directed cycle.
    Option (c) together with $0 \in A'$ implies $A' \subseteq H$ for some subgroup $H$ such that $|A'|= \frac{|H|+1}{2}$ and $A'+A' =H$. Since $A' \cap A'^{-1}= \{0\}$, we obtain that the Cayley graph $\Gamma(F, A'-\{0\})$ is a disjoint union of $|F/H|$ regular tournaments. Option (d) cannot happen since $0$ can be uniquely written as $0+0$.

   Now we can write $A= (X+F) \cup A'$, where $A' \subseteq F$ and $0 \in A'$. We observe from the definition of the lexicographic product, the Cayley graph of $\Gamma(G,A-\{0\})$ is the lexicographic product of  $\Gamma(G \slash F,X)$ with  $\Gamma(F,A'-\{0\})$. The statement now follows.
\end{proof}

It is remarkable that this classification is completely combinatorial and does not depend the exact abelian group of the Cayley graphs. Therefore, one can ask of this classification holds for a larger class of Seymour orientations, see the following conjecture: 

\begin{Con}\label{conjecture cayley}
    Every Seymour Cayley orientation can be constructed by taking (possibly repeated) lexicographic products of empty graphs, the $k$-th power of a directed cycles, and regular tournaments.
\end{Con}
We can ask if it for all vertex-transitive Seymour orientations.

\begin{Prob}
   Every vertex-transitive Seymour orientation can be constructed by taking (possibly repeated) lexicographic products of empty graphs, the $k$-th power of a directed cycles, and regular tournaments.
\end{Prob}

DeVos~\cite{devos2013structure} extended Kemperman's result to non-abelian groups, which suggests a possible approach to (dis)prove Conjecture \ref{conjecture cayley}.
Specifically, DeVos characterizes all maximal critical pairs up to similarity. Observe that a pair $(A,B)$ is critical if and only if $(gA,B)$ or $(A^{-1},B^{-1})$ is. These pairs are called similar. Also, note that the pair $(S_1,S_1)$ is not necessarily maximal. Therefore, to determine all connecting sets $S$ of Seymour orientations, one should look at which 
pairs of sets in his characterization are similar to a pair that contains a critical pair $(S_1,S_1)$ such that $S_1 \cap S_1^{-1}=\{1\}$. Applying this characterization to our problem appears to necessitate a highly technical analysis beyond the scope of the present paper.

However, as a proof of concept, we can look at which trios described in Theorem 2.3 in~\cite{devos2013structure} contain two copies of a set $S_1$ satisfying $S_1 \cap S_1^{-1} = \{1\}$. Let $\Phi_1,\ldots,\Phi_m$ be the sequence such that $\Phi_{i}$ lies in some group $G_{i}$ and where $\Phi_i$ is a `continuation' of $\Phi_{i-1}$. Since $S_1 \cap S_1^{-1} = \{1\}$, every $\Phi_{i}$ contains twice a set $S_1 \cap G_i=X_i$ satisfying $X_i \cap X_{i}^{-1} = \{1\}$. Moreover, $X_{i-1}-X_i$ the union of cosets of $G_i$ in $G_{i-1}$. Thus, the Cayley graph $\Gamma(G_{i-1},X_{i-1}-\{1\})$ is the lexicographic product of $\Gamma(G_{i-1} \slash G_i , (X_{i-1} \backslash X_i)\slash G_i)$ with $\Gamma(G_i,X_i-\{1\})$.

Note that the two sets $X_i$ have the same size. Thus, any (impure) beat corresponds to a regular tournament $(|B|=|C|)$ or to a disjoint union of smaller graphs if $|A|=|B|< G_{i+1}$. Similarly, any (impure) chord corresponds to a $k$-th power of a directed cycle as both $A-X_{i+1}$ and $B-X_{i+1}$ are geometric sets. Moreover, $X_i \cap X_i^{-1}=\{1\}$ implies that $\Phi_i$ cannot be an impure dihedral chord. Lastly, all sporadic cases for $\Phi_m$ are not possible if we assume that two sets are equal to $X_m$ satisfying $X_m \cap X_m^{-1}= \{1\}$.

\section{Discussion}

In this paper, we have given some examples and general methods to construct Seymour-tight and Sullivan-tight orientations. We used these methods to construct special putative counterexamples to these conjectures. Additionally, we classified all Seymour-tight Cayley orientations of abelian groups. A natural goal would be to obtain a (partial) classification of general Seymour-tight orientations or to establish further structural properties. Progress towards this might correlate with significant progress on Seymour's second neighbourhood conjecture itself.

In addition to Conjecture~\ref{conjecture cayley}, another class of highly symmetric digraphs where the classification of Seymour-tight graphs may be tractable is the class of distance transitive orientations.
Lam~\cite{lam1980distance} observed that directed cycles and Paley tournaments are distance transitive digraphs. Moreover, he proved that the lexicographic product of a distance transitive graph with the empty graphs gives a new distance transitive graph. Note that these graphs are all Seymour-tight orientations. Bannai, Cameron and Kahn~\cite{bannai1981nonexistence} proved that there are no other distance transitive digraphs of odd girth. We conjecture the following.
\begin{Con}
Every distance transitive digraph is a Seymour-tight orientation.
\end{Con}
This might be an interesting intermediate step towards classifying all distance transitive digraphs (of even girth).

We have seen that the basic examples of Seymour-tight orientations, namely the $k$-th power of a directed cycle and regular tournaments, exhibit symmetry and regularity. However, using generalized lexicographic products, we can also construct many strongly connected  Seymour-tight orientations that do not have symmetry or regularity, see Figure~\ref{fig: not reg example 2x}. 


Another question is whether the converse, which is the orientation where all arcs are reversed, of a Seymour-tight orientation is again a Seymour-tight orientation. This is not true for all (strongly connected) Seymour-tight orientations. As an example, take $G$ to be $\dir{C_3}$ with an extra vertex $u$ that has one outgoing arc pointing towards a vertex $v$ on this cycle. Then the converse of $G$ is not a Seymour-tight orientation. The vertex $v$ has namely two in-neighbours in $G$, but only one vertex in its second in-neighbourhood. By taking the lexicographic product of a directed cycle $D$ with $G$, we obtain a strongly connected Seymour-tight orientation whose converse is not Seymour-tight. 

On the positive side, this converse-invariance holds if we impose additional symmetry conditions, namely, the converse of any vertex-transitive Seymour-tight orientation is Seymour-tight. Indeed, for any vertex-transitive orientation $O$, there exists $k$ and $m$ such that $|\NGi[+]{1}{O}{v}|=|\NGi[-]{1}{O}{v}|=k$ and $|\NGi[+]{2}{O}{v}|=|\NGi[-]{2}{O}{v}|=m$. If $O$ is also Seymour-tight, then $k =|\NGi[+]{1}{O}{v}|=|\NGi[+]{2}{O}{v}|=m$ and thus also $|\NGi[-]{1}{O}{v}|= |\NGi[-]{2}{O}{v}|$. This naturally leads to the following question: is the converse of a Seymour-tight orientation also Seymour-tight, under the weaker assumption that every vertex has in- and out-degree $k$? We conjecture that this is the case.

\begin{Con}
    Let $O$ be an orientation such that $$|\NGi[-]{1}{O}{v}|=|\NGi[+]{1}{O}{v}|=|\NGi[+]{2}{O}{v}|=k $$ for all $v \in O$. Then it also holds that $|\NGi[-]{2}{O}{v}|=k$ for all $v \in O$.
\end{Con}
Note that the conjecture holds for $k=1$ and $k=2$, by Lemmas~\ref{lem:out-degree1} and~\ref{lem:out-degree2}. 
One could even pose a stronger question: does the converse property still hold if we relax the uniformity of the constraint from the parameter $k$?
\begin{Prob}\label{open problem Eulerian}
 If $O$ is a Seymour-tight orientation and an Eulerian orientation, is then the converse orientation of $O$ also Seymour-tight? 
 \end{Prob} 

Note that this statement is true for Eulerian orientations of regular tournaments, powers of directed cycles, empty graphs and their (repeated) generalized lexicographic products. Moreover, the following construction preserves Seymour-tightness under the converse operation: take a regular tournament which contains some small regular tournament on which all other vertices are uniform, and then replace that small regular tournament with some other Seymour-tight orientation $S$ satisfying $|\NGi[-]{1}{S}{v}|=|\NGi[+]{1}{S}{v}|=|\NGi[+]{2}{S}{v}|=|\NGi[-]{2}{S}{v}|$ for all $v$. 

Seymour’s second neighbourhood conjecture in the  context of Eulerian digraphs has been studied by Cary~\cite{Cary2019}, who showed that if an Eulerian digraph $G$ admits a simple cycle partition, then Seymour’s second neighbourhood conjecture holds for $G$.

\subsubsection*{Acknowledgements}
RK was partially supported by the Dutch Research Council (NWO) grant OCENW.M20.009 and the Gravitation Programme NETWORKS (024.002.003) of the Dutch Ministry of Education, Culture and Science (OCW).

\subsubsection*{Open access statement}
For the purpose of open access, a CC BY public copyright license is applied to any Author Accepted Manuscript (AAM) arising from this submission.

\bibliographystyle{plain}
\bibliography{Seymour}

\end{document}